\documentclass[11pt]{article}
\usepackage[T1]{fontenc}
\usepackage[utf8]{inputenc}
\usepackage{lmodern,microtype}
\usepackage[a4paper,margin=26mm,headheight=15pt,footskip=13mm]{geometry}
\usepackage{amsmath,amssymb,amsthm,mathtools,mathrsfs}
\usepackage{enumitem,booktabs,tabularx,longtable,array}
\usepackage{fancyhdr,needspace}
\usepackage{xurl}
\usepackage[hidelinks,bookmarksnumbered=true]{hyperref}
\setlist[enumerate]{leftmargin=2em,itemsep=3pt,topsep=5pt}
\allowdisplaybreaks[2]
\numberwithin{equation}{section}
\newtheorem{theorem}{Theorem}[section]
\newtheorem{lemma}[theorem]{Lemma}
\newtheorem{conjecture}[theorem]{Conjecture}
\theoremstyle{definition}

\hypersetup{pdftitle={Symplectic capacities and Weyl laws from Abelian–Higgs min–max theory},pdfauthor={Taisuke Shibata}}
\newcommand{\eps}{\varepsilon}
\newcommand{\R}{\mathbb R}
\newcommand{\C}{\mathbb C}
\newcommand{\Z}{\mathbb Z}
\newcommand{\CP}{\mathbb{CP}}
\newcommand{\G}{\mathcal G}
\newcommand{\Conf}{\mathcal C}
\newcommand{\B}{\mathcal B}
\newcommand{\LL}{\mathscr L}
\newcommand{\En}{\mathcal E}
\newcommand{\Et}{\widetilde{\mathcal E}}
\newcommand{\AH}{\mathrm{AH}}
\newcommand{\cAH}{c^{\AH}}
\newcommand{\Vol}{\operatorname{Vol}}
\newcommand{\Area}{\operatorname{Area}}
\newcommand{\Mass}{\mathbf M}
\newcommand{\av}{\operatorname{av}}
\newcommand{\Id}{\operatorname{id}}
\newcommand{\spt}{\operatorname{spt}}
\newcommand{\supp}{\operatorname{supp}}
\newcommand{\Sym}{\operatorname{Sym}}
\newcommand{\diver}{\operatorname{div}}

\newcommand{\PD}{\operatorname{PD}}

\newcommand{\arxiv}[1]{\href{https://arxiv.org/abs/#1}{\nolinkurl{arXiv:#1}}}

\theoremstyle{definition}
\newtheorem*{question}{Question}

\title{Symplectic capacities and Weyl laws from Abelian–Higgs min–max theory}
\author{Taisuke Shibata}
\date{1 October 2026\\\small}
\begin{document}
\maketitle
\begin{abstract}

For a Hermitian line bundle $E\to Y$ over a Riemannian manifold
$(Y^n,g)$,  the self-dual Abelian--Higgs energy is defined by
$
 \En^Y_{\eps,g}(u,a)=\frac1{2\pi}\int_Y
 \left(|\nabla_a u|_g^2+\eps^2|F_a|_g^2+
       \frac{(1-|u|^2)^2}{4\eps^2}\right)dV_g,
$
where $u$ is a smooth section of $E$, $a$ is a unitary
connection, and $F_a$ is its curvature. Pigati and Stern studied the  energy
as a codimension-two analogue of the Allen--Cahn energy to study minimal submanifolds in a Riemannian manifold.

In this paper,  we use this functional  to define a
nondecreasing sequence of symplectic capacities $\{c_k^{\AH}\}_{k\ge1}$ in dimension four  by a min--max
construction. In particular,  we prove  that $c_1^{\AH}$ is normalized on the ball and the cylinder.
For the upper bound on the cylinder, we combine
Gromov's theory of families of $J$-holomorphic spheres with
Taubes's planar vortex. For the lower bound on the ball,
we use the properties of the Abelian--Higgs theory developed by
Pigati--Stern, together with a geometric estimate on smooth
approximations to the double of the ball and an equivariant
deformation of low energy sublevels. These estimates give
the normalization without Seiberg--Witten theory.

We then use Seiberg--Witten moduli spaces on $\CP^2$ and
$S^2\times S^2$ to prove an upper bound on the higher
capacities of balls and the formula
$c_k^{\AH}(Z^4(a),\omega_0)=ka$.
We also prove Euclidean Weyl laws for the capacity sequence
and for the min--max sequence with the standard Euclidean
metric. For these results, we construct AH sweepouts from
moduli spaces of the solutions to the perturbed Seiberg--Witten equations.
\end{abstract}

\tableofcontents

\Needspace{15\baselineskip}
\section{Introduction}\label{sec:introduction}

In this section, we first give the definitions and state the main theorems. We then
explain the historical background and the motivation of this paper.
Finally, we give an outline of the proofs.

\subsection{The AH energy and capacity sequence}\label{sec:bundle-AH}\label{sec:four-dimensional-definitions}
Let $(Y^n,g)$ be a smooth Riemannian manifold, and let $E\to Y$
be a Hermitian line bundle. We consider a smooth section $u$ of $E$
and a unitary connection $a$. We write $F_a=\nabla_a^2\in i\Omega^2(Y)$
for the curvature of this connection. The Abelian--Higgs energy of
this pair is defined by

\begin{equation}\label{eq:unitary-AH-energy}
 \En^Y_{\eps,g}(u,a)=\frac1{2\pi}\int_Y
 \left(|\nabla_a u|_g^2+\eps^2|F_a|_g^2+
       \frac{(1-|u|^2)^2}{4\eps^2}\right)dV_g.
\end{equation}

This functional is the self-dual $U(1)$ Yang--Mills--Higgs functional
studied by Pigati--Stern \cite{PigatiStern2021}. In our convention,
we divide their energy by $2\pi$. When we use the energy without
this normalization, we write $E_{\eps,g}=2\pi\En_{\eps,g}$.
In a local unitary frame, we can write the connection and its
curvature in the following form:

\begin{equation}\label{eq:SW-AH-notation}
 \nabla_a=d-iA_a,\qquad F_a=-i\,dA_a.
\end{equation}

Unitary gauge transformations do not change the value of this energy.

We now consider the four-dimensional case. Let
$D=\operatorname{Int}\overline D$, where $\overline D$ is a compact
connected oriented smooth four-manifold. We allow its boundary to
be empty. The metric, fields, and gauge transformations are required
to be smooth up to the boundary. We also use Euclidean cubes as
auxiliary domains. In this case, we require smooth extensions to a
neighborhood of the closure. We use the trivial bundle $E_D=D\times\C$
and write $d_A=d-iA$. In this notation, the energy is given by

\begin{equation}\label{c:eq:energy}
 \En^D_{\eps,g}(u,A)=\frac{1}{2\pi}\int_D
 \left(|d_Au|_g^2+\eps^2|dA|_g^2+
 \frac{(1-|u|^2)^2}{4\eps^2}\right)dV_g.
\end{equation}

We define the gauge group and the configuration quotient as follows:

\begin{equation}\label{c:eq:quotient}
 \G(D)=C^\infty(\overline D,S^1),\qquad
 \B^*(D)=\Conf^*(D)/\G(D),\qquad
 \Conf^*(D)=\{(u,A):u\not\equiv0\}.
\end{equation}

Here the fields satisfy $u\in C^\infty(\overline D;\C)$ and
$A\in\Omega^1(\overline D;\R)$. The gauge action on a pair is given by
$\gamma\cdot(u,A)=(\gamma u,A-i\gamma^{-1}d\gamma)$.

We give $\B^*(D)$ the quotient topology coming from the smooth
Fr\'echet topology. For a point $x\in D$, we evaluate each gauge
transformation at $x$. This evaluation gives the following
line bundle over the quotient:

\begin{equation}\label{c:eq:eval-line}
 \LL_x=(\Conf^*(D)\times\C)/\G(D),\qquad
 \gamma\cdot(c,z)=(\gamma\cdot c,\gamma(x)z).
\end{equation}

We denote the first Chern class of this line bundle by
$\mu_D=c_1(\LL_x)\in H^2(\B^*(D);\Z)$. This class does not depend
on the choice of $x$.

As in the usual construction, a finite-dimensional space of smooth
sections gives a projective family. The pullback of the evaluation
line to this family is $\mathcal O(1)$. In particular, $\LL_x$ is
nontrivial and $\mu_D^k\ne0$ for every $k\ge1$. We use such projective
families in the proof of Lemma~\ref{c:lem:elementary}.

\Needspace{6\baselineskip}
We use this evaluation class to give the topological condition for
our min--max construction. We call a pair $(K,F)$ an \emph{AH $k$-sweepout}
if $K$ is a nonempty finite simplicial complex and
$F:K\to\B^*(D)$ is a continuous map which satisfies

\begin{equation}\label{eq:family-detection}
 F^*(\mu_D^k)=(F^*\mu_D)^k\ne0
 \quad\text{in }H^{2k}(K;\Z).
\end{equation}

We write $\mathscr S_k(D)$ for the collection of these sweepouts.
We define

\begin{align}
 w_{\eps,k}(D,g)
 &=\inf_{(K,F)\in\mathscr S_k(D)}
       \max_{z\in K}\En^D_{\eps,g}(F(z)),
       \label{eq:family-minmax}\\
 W_k(D,g)&=\liminf_{\eps\downarrow0}w_{\eps,k}(D,g).
       \label{eq:family-scale-limit}
\end{align}

For a connected smooth symplectic four-manifold $(M,\omega)$, define

\begin{equation}\label{eq:family-capacity}
 \cAH_k(M,\omega)
 =\sup_{D\Subset\operatorname{Int}M}
   \sup_{J\in\mathcal J_\omega(\overline D)}W_k(D,g_J),
 \qquad g_J(v,w)=\omega(v,Jw).
\end{equation}

For the first supremum, we take connected domains $D$ whose closures
are compact and smooth and are contained in $\operatorname{Int}M$.
For the second supremum, we take smooth compatible almost complex
structures defined on a neighborhood of $\overline D$. If $M$ is
closed, we also allow the choice $D=M$. We set $\cAH_0=0$.

In particular, we can use this definition on every connected open
symplectic four-manifold. This includes a connected open set
$U\subset\R^4$.

\subsection{Axioms  of symplectic capacities and estimates on balls and cylinders}\label{sec:capacity-axioms}
Before stating our first result, we specify the standard models
and the capacity axioms which we use. We write $z_j=x_j+iy_j$ and set

\begin{equation}\label{eq:standard-symplectic-form}
 \omega_0=\sum_{j}\frac{i}{2} dz_{j}\wedge d\bar{z_{j}},\qquad
 g_0=\sum_{j} (dx_j^2+dy_j^2).
\end{equation}

The open ball and the open cylinder are denoted by
\begin{equation}\label{eq:standard-models}
 B^{2n}(a)=\{z\in\C^n:\pi|z|^2<a\},\qquad 
 Z^{2n}(a)=B^{2}(a) \times \C^{n-1}.
\end{equation}

On both sets, we use the restriction of $\omega_0$. We sometimes just  write $B(a)$, $ 
 Z(a)$ if there is no confusion.  

We use the capacity axioms in \cite[Chapter 2]{HoferZehnder1994}.
 In this paper, a symplectic capacity
on symplectic $2n$-manifolds assigns a value
$c(M^{2n},\omega)\in[0,+\infty]$ and has the following properties:
\begin{enumerate}[label=\textup{(C\arabic*)}]
\item \emph{Monotonicity.} A symplectic embedding
$\varphi:(M^{2n},\omega)\hookrightarrow(N^{2n},\eta)$ implies
$c(M^{2n},\omega)\le c(N^{2n},\eta)$.
\item \emph{Conformality.} For $r>0$,
$c(M^{2n},r\omega)=r\,c(M^{2n},\omega)$.
\item \emph{Nontriviality.}
$0<c(B^{2n}(1),\omega_0)$ and $c(Z^{2n}(1),\omega_0)<+\infty$.
\end{enumerate}
We call a capacity  normalized when $c(B^{2n}(1),\omega_0)$ = $c(Z^{2n}(1),\omega_0)=1$.  With this convention, we state our first main result
as follows.

\Needspace{14\baselineskip}

\begin{theorem}\label{c:thm:main}
For each $k\ge1$, the invariant $\cAH_k$ is a symplectic capacity on
connected symplectic four-manifolds. The sequence is nondecreasing in
$k$. For every $a>0$, we have
\begin{equation}\label{c:eq:main-normalization}
 \cAH_1(B^4(a),\omega_0)=\cAH_1(Z^4(a),\omega_0)=a.
\end{equation}
Moreover,
\begin{equation}\label{c:eq:raw-first}
 \lim_{\eps\downarrow0}w_{\eps,1}(B^4(a),g_0)=a.
\end{equation}
\end{theorem}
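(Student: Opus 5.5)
The proof splits into four parts: the formal axioms, the upper bound on the cylinder, the lower bound on the ball, and the raw limit \eqref{c:eq:raw-first}. The ball upper bound $\cAH_1(B^4(a))\le a$ and $\cAH_1(B^4(a))\le\cAH_1(Z^4(a))$ will both follow from monotonicity, since $B^4(a)\subset Z^4(a)$.

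\emph{Axioms and monotonicity in $k$.} For (C1), let $\varphi:M\hookrightarrow N$ be a symplectic embedding. A domain $D\Subset\operatorname{Int}M$ with $J\in\mathcal J_\omega(\overline D)$ is carried to $\varphi(D)\Subset\operatorname{Int}N$ with $\varphi_*J$. Pullback of pairs identifies $\B^*(\varphi(D))\cong\B^*(D)$, preserving $\mu$ and the energy, because $\varphi$ is an isometry $g_J\to g_{\varphi_*J}$. So $W_k$ is unchanged and the supremum over $N$ dominates.

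For (C2), replace $\omega$ by $r\omega$ with the same $J$, so $g\mapsto rg$. Rescaling $\eps\mapsto\sqrt r\,\eps$ shows $\En_{\sqrt r\eps,rg}=r\,\En_{\eps,g}$ in dimension four. Indeed $dV$ scales by $r^2$, $|d_Au|^2$ by $r^{-1}$, $\eps^2|dA|^2$ by $r\cdot r^{-2}$, and the potential by $r^{-1}$. Hence $W_k(D,rg)=rW_k(D,g)$.

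For monotonicity in $k$, if $(F^*\mu)^{k+1}\ne0$ then $(F^*\mu)^k\ne0$. Thus $\mathscr S_{k+1}\subset\mathscr S_k$, giving $w_{\eps,k}\le w_{\eps,k+1}$.

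Nontriviality and finiteness follow from the normalization below. By scaling it suffices to treat $a=1$.

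\emph{Upper bound on the cylinder.} Fix $D\Subset Z^4(a)$ and $J\in\mathcal J_{\omega_0}(\overline D)$. Extend $J$ to a tame structure on $S^2(a+\delta)\times T^2$ (after embedding the bounded region), or on $S^2\times\R^2$ with $J$ standard outside a compact set. Gromov's theory then gives a foliation of $\overline D$, up to a null set, by $J$-holomorphic spheres in the class $[S^2]$, each of area $a+\delta$. These spheres are parametrized by a $2$-dimensional family $\Sigma$.

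For each point $p$ in a suitable $2$-parameter base I would build a configuration. Take a transverse holomorphic disk through the family and place a Taubes planar vortex of scale $\eps$ centered along the sphere $C_p$ of the foliation. In normal coordinates, the vortex energy density integrates to $\Area(C_p)+o(1)$, with the cross-term errors controlled by the curvature of the foliation.

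Next, extend these configurations to a family over a space $K$ containing $\CP^1$-directions: combine the vortices with the vacuum by the usual linear interpolation $u_t=tu+(1-t)$. This yields a family whose $\mu$ is nonzero, e.g. a cone/projective construction as in Lemma~\ref{c:lem:elementary}, whose added directions cost at most the vortex energy up to $o(1)$. Then $w_{\eps,1}\le a+\delta+o(1)$, so $W_1\le a$.

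\emph{Lower bound on the ball.} I would take $D=B^4(a-\delta)$, $J=J_0$, and show $W_1(D,g_0)\ge a-\delta$; this also gives the lower half of \eqref{c:eq:raw-first}. The approach is by contradiction.

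Suppose some sweepout $F$ has $\max\En<a-2\delta$. By Pigati--Stern, low-energy configurations concentrate, as $\eps\to0$, on stationary integral $2$-varifolds (free boundary after doubling) of mass $<a-2\delta$. I would double the ball to a smooth approximation of $S^4$-like $\overline D\cup\overline D$. The geometric estimate I would prove is that any nontrivial stationary $2$-varifold, or free-boundary one in the ball, has mass $\ge\pi r^2=a$; monotonicity gives exactly this for a surface through the center.

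Below that threshold, an equivariant deformation retracts the sublevel set $\{\En<a-2\delta\}$ into a neighbourhood of configurations with $|u|\approx1$ everywhere. On that neighbourhood $\LL_x$ is trivialized by $u(x)/|u(x)|$, so $\mu=0$ there. This contradicts $F^*\mu\ne0$.

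\emph{Main obstacle.} The hardest step is this deformation: running the Pigati--Stern gradient-flow/concentration theory uniformly in families, with boundary, and sharp at the level $a$.

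Once both bounds hold, combining them gives $W_1(B^4(a-\delta))\ge a-\delta$, $\limsup w_{\eps,1}(B^4(a))\le a$ (the cylinder construction with $J_0$ and flat disks), and $\cAH_1$ equal to $a$ for both the ball and the cylinder.
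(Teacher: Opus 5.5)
Your treatment of (C1), (C2) and of monotonicity in $k$ matches the paper, but the rest has genuine gaps. First, the claim that nontriviality and finiteness follow from the normalization holds only for $k=1$. Monotonicity in $k$ gives $\cAH_k\ge\cAH_1$, hence positivity on the ball, but it gives no bound on $\cAH_k(Z^4(1))$ for $k\ge2$. So you have not shown that $\cAH_k$ is a capacity for $k\ge2$. The paper needs a separate construction for this. For $k$ members $(u_i,A_i)$ of the $\CP^1$ vortex family, it takes $(\prod_iu_i,\sum_iA_i)$. This is symmetric and descends to $\Sym^k(\CP^1)\cong\CP^k$ with pullback class $h$ (restrict to the line $z+(k-1)z_0$). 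The product section is nonzero because the zero sets are nowhere dense. Cauchy--Schwarz with $|u_i|\le1$ bounds the energy by $k^2a_0+o(1)$ for every $a_0>a$, so $\cAH_k(Z(a))\le k^2a$.

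Second, your mechanism for $F^*\mu\ne0$ in the cylinder family fails on energy. If $u\approx e^{i\theta}$ in the normal angle, then $\tfrac12(u+1)$ nearly vanishes on the positive-volume region where $u\approx-1$, which costs $\gtrsim\eps^{-2}$ in potential energy. The projective family of Lemma~\ref{c:lem:elementary} costs $\Vol_g(D)/(8\pi\eps^2)$. No interpolation is needed. Parametrize the vortices by the base point $z$ of the fiber in a disk $|z|\le R_0$ with $R_0>\sup_{\overline D}|p|$. For $|z|=R_0$ the cut-off vortex misses $\overline D$, so the restricted pair is exactly pure gauge with phase $q_\theta$. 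Its value at $x_0$ has degree one in $\theta$ (homotope $B$ to the identity and $p$ to $0$). Hence the disk family itself descends to $\overline{\mathbb D}/\partial\mathbb D\cong\CP^1$ with $F^*\mu_D=h$, by the clutching argument of Lemma~\ref{c:lem:disk}.

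On the ball, the Pigati--Stern convergence concerns critical points, not arbitrary low-energy configurations. What is needed is the absence of critical values in $(0,L']$ on a fixed smooth closed double for small $\eps$, followed by a Palais--Smale deformation at that fixed $\eps$. Neither uniformity in families nor a boundary theory is required.

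More seriously, $u(x)/|u(x)|$ does not trivialize $\LL_x$ on any low sublevel. At fixed $\eps$ you can force $u(x)=0$ by modifying $u$ on a ball of radius $\rho$ at cost $O(\rho^2+\rho^4\eps^{-2})$. Also, point evaluation is not continuous on the $W^{1,2}$ space where the flow runs. The paper instead uses the averaged phase $\int u/|\int u|$ on the closed double in Coulomb gauge, where only constant gauges remain. It transfers this to the ball via truncation, the Neumann gauge $\phi_A$, and reflection. Under $\gamma=e^{i\psi}$ this composite changes by $e^{i\av_{B_R}\psi}$, and the extra factor $e^{i\phi_A(x_0)}$ converts this into the required character $\gamma(x_0)$.

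Finally, monotonicity gives the mass bound only for varifolds through the center. The general bound $\liminf_{\eta\downarrow0}m_\eta\ge2\pi R^2$ on the smoothed doubles, which allows mass on the equator, needs Brendle's field $X_y$ at some $y\in\spt V\cap E$. The argument uses $\diver_PX_y\le R^{-2}$ and shows that the cutoff flux term tends to $-2\pi\theta$.
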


We prove this theorem in Section~\ref{c:sec:capacity} without
Seiberg--Witten theory. In Section~\ref{m:sec-upper}, we use
Seiberg--Witten theory to refine the estimate, as stated in the next
theorem. 

\Needspace{10\baselineskip}
\begin{theorem}\label{m:thm:models}
For every $a>0$ and $k\ge1$, we have
\begin{equation}\label{eq:intro-cylinder-exact}
 \cAH_k(Z^4(a),\omega_0)=ka
\end{equation}
and
\begin{equation}\label{eq:intro-ball-upper}
 a\le\cAH_k(B^4(a),\omega_0)\le a\,d(k),\qquad
 d(k)=\min\{d\in\Z_{\ge1}:d(d+3)/2\ge k\}.
\end{equation}
\end{theorem}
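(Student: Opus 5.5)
The plan is to establish four estimates: $\cAH_k(Z^4(a))\ge ka$, $\cAH_k(Z^4(a))\le ka$, $\cAH_k(B^4(a))\ge a$, and $\cAH_k(B^4(a))\le a\,d(k)$. Two of them are immediate from Theorem~\ref{c:thm:main}. Since the sequence is nondecreasing in $k$, we get $\cAH_k(B^4(a))\ge\cAH_1(B^4(a))=a$. The real work is therefore the two upper bounds, which come from Seiberg--Witten sweepouts, and the lower bound $ka$ for the cylinder.

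\textbf{Upper bounds.} Fix $D\Subset B^4(a)$ (respectively $D\Subset Z^4(a)$) and a compatible $J$ on $\overline D$. The strategy is to embed $D$ symplectically into a closed manifold.
\begin{enumerate}
\item For the ball, embed $D$ into $(\CP^2,\omega_{FS})$ with line area $a+\delta$. For the cylinder, embed $D$ into $(S^2\times S^2,\,(a+\delta)\sigma\oplus R\sigma)$ with $R$ large. In both cases extend $J$ to a tame almost complex structure.
\item On the closed manifold, consider the perturbed Seiberg--Witten equations with Taubes's large perturbation $r\omega$, in the spin$^c$ structure whose moduli space has dimension $2k$. On $\CP^2$ the class is $dL$ with $d(d+3)/2\ge k$. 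On $S^2\times S^2$ the class is $k[S^2\times\mathrm{pt}]$, possibly together with the fibre direction chosen so that the dimension is exactly $2k$.
\item Taubes's SW$\Rightarrow$Gr theory identifies this moduli space, for large $r$, with a parametrized family of $J$-holomorphic curves through $k$ generic points.
\item Rescale the SW fields to AH fields with $\eps\sim r^{-1/2}$ and restrict them to $D$. This gives a map $F$ from a compact $2k$-dimensional parameter space into $\B^*(D)$. The requirement $F^*\mu_D^k\neq0$ follows because the evaluation class restricts to the SW $\mu$-class, and the nonvanishing SW invariant, i.e.\ the $\mu^k$-pairing, is $\pm1$ in these chambers.
\item Taubes's energy estimate bounds the energy of each configuration on $D$ by $\frac{1}{2\pi}\int|F_A|\to\langle\omega,\text{class}\rangle$. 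This gives $d(a+\delta)$ for the ball and $k(a+\delta)$ for the cylinder, the latter because the large $R$ direction does not contribute.
\end{enumerate}
Letting $\eps\to0$ and then $\delta\to0$ gives the upper bounds. Since the paper says these AH sweepouts are built from SW moduli spaces, I would invoke that construction directly.

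\textbf{Lower bound $\cAH_k(Z^4(a))\ge ka$.} Exhaust $Z^4(a)$ by domains $D=B^2(a-\delta)\times(\text{large polydisc or box})$ with the product metric. For any AH $k$-sweepout, use $F^*\mu^k\ne0$ as follows. Choose $k$ points $x_1,\dots,x_k$ far apart in the $\C$-factor. The classes $c_1(\LL_{x_i})$ are all equal to $\mu$, so their product is nonzero. A Lusternik--Schnirelmann argument then shows that the sets where the family is "trivializable near $x_i$" cannot cover $K$. Hence some $z\in K$ has, near each $x_i$, a configuration that is nontrivial in a neighbourhood $B^2(a-\delta)\times\{\text{disc around }x_i\}$. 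Applying the ball/cylinder lower-bound machinery from the proof of Theorem~\ref{c:thm:main} (Pigati--Stern vortex-line energy of at least $a$ per disjoint slab) to each of the $k$ disjoint slabs gives energy $\ge k(a-\delta)-o(1)$.

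\textbf{Main obstacle.} I expect this lower-bound step to be the main obstacle. It requires localizing the first-capacity argument to disjoint regions and making the equivariant deformation of low-energy sublevels compatible with a cup-length argument: the low-energy sublevel near each region must carry no $\mu$. I would first try to prove, for each region $U_i$, that the sublevel set $\{\En^{U_i}<a-\delta\}$ deformation retracts (gauge-equivariantly) onto configurations with $|u|$ near $1$ on $U_i$. On that set $\LL_{x_i}$ is trivialized, and the cup-product vanishing then concludes.
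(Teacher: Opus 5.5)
Your overall architecture matches the paper's. You get the ball lower bound from normalization and monotonicity in $k$. You get both upper bounds from Seiberg--Witten sweepouts, on $\CP^2(b)$ with $E=\mathcal O(d)$ and on $S^2(a_0)\times S^2(Na_0)$ with $c_1(E)=k\PD(F)$. You get $\cAH_k(Z^4(a))\ge ka$ from a cup-length argument over $k$ disjoint regions.

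The problem is the cylinder lower bound. The step you single out as the main obstacle is not one, and the route you propose for it would fail. In the paper this bound takes two lines: $Z^4(a)$ contains $k$ disjoint translates of $B^4(a)$, and Lemma~\ref{c:prop:optimized-local} together with $\cAH_1(B^4(a))=a$ gives $ka$.

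The localization behind that lemma (Lemma~\ref{c:prop:fixed-local}) needs neither a local trivialization nor any deformation on the subdomains. It runs as follows.
\begin{itemize}
\item Cover $K$ by the open sets $\{e_i<L_i\}$. Here $e_i$ is the energy on $U_i$, each $L_i$ lies just below $w_{\eps,1}(U_i)$, and $\sum_iL_i$ exceeds the maximal energy.
\item On $\{e_i<L_i\}$ the restriction to $U_i$ is defined, by the zero barrier \eqref{c:eq:zero-barrier}.
\item If $\lambda=F^*\mu$ were nonzero on a subcomplex $K_i$ of the $i$th set, the restricted family would be a $1$-sweepout of $U_i$ with energy below $w_{\eps,1}(U_i)$. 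That contradicts the definition of $w_{\eps,1}(U_i)$.
\item Lemma~\ref{lem:finite-cup-cover} then kills $\lambda^k$, and you finish by taking $\liminf_{\eps\downarrow0}$.
\end{itemize}

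The energy inequality is the direct output of this argument. There is no per-configuration ``vortex-line energy $\ge a$'': the vacuum $(1,0)$ has energy zero on any ball. So your covering sets must be defined by local energy thresholds, not by ``nontriviality near $x_i$''.

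Your proposed gauge-equivariant deformation retraction of $\{\En^{U_i}<a-\delta\}$ onto configurations with $|u|\approx1$ is also not available. Lemma~\ref{lem:equivariant-descent} relies on Palais--Smale on a closed manifold. That is exactly why the paper reflects to the smoothed double, and even there it obtains only a nowhere-zero section of the evaluation line on ball sublevels (Lemma~\ref{c:prop:ball-lower}). If you want triviality explicitly, that lemma already provides it, and restriction transports it to your slabs.

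For the upper bounds, several corrections are needed.
\begin{itemize}
\item Drop step~3. No SW$\Rightarrow$Gr identification is used, and the full moduli space is not a family of curves through $k$ points. The pairing $\langle U^m,[\mathcal M]\rangle=\pm1$ comes from integer wall-crossing against the positive-scalar-curvature chamber.
\item The energy estimate is not a bound by $\frac1{2\pi}\int|F_A|$; note that $F_A=F_{A_0}+2F_a$ is the determinant-line curvature. The estimate comes from the square-completion identity of Lemma~\ref{m:basic-identities}. In that identity, Taubes's estimates make $\|D_A(\alpha u_0)\|_2^2$ and the self-dual remainder $O(r^{-1})$.
\item On $\CP^2$ the moduli space has dimension $2N_d\ge2k$, not $2k$. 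You get $W_{N_d}\le db$ and then use monotonicity in the index.
\item $J$ must be extended compatibly, not merely tamely. Both $g_J$ and Taubes's estimates use $\omega(\cdot,J\cdot)$.
\item You need nearby regular data with the energy still measured in the original metric, as in Lemma~\ref{m:regularization}.
\end{itemize}
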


\subsection{Euclidean Weyl laws}\label{sec:intro-weyl}\label{sec:intro-standard-metric}
By \eqref{eq:family-capacity}, the capacity sequence is defined for
every nonempty connected open set $U\subset\R^4$. We write
$|U|=\frac12\int_U\omega_0\wedge\omega_0$ for the four-dimensional
Lebesgue measure. In the following theorem, $\partial U$ and
$\overline U$ mean the boundary and the closure taken in  $\R^4$.

\Needspace{10\baselineskip}

\begin{theorem}\label{thm:intro-weyl}
There exists a universal constant $b_{\AH}$ with
\begin{equation}\label{eq:intro-coefficient}
 \sqrt2\le b_{\AH}\le2
\end{equation}
such that the following statements hold for every nonempty connected
open set $U\subset\R^4$. If $U$ is bounded and $|\partial U|=0$, then
\begin{equation}\label{eq:intro-optimized-weyl}
 \lim_{k\to\infty}\frac{\cAH_k(U,\omega_0)}{\sqrt{k}}
 =b_{\AH}\sqrt{|U|}.
\end{equation}
 For every nonempty connected
open set $U$, without any boundedness or boundary assumption,
\begin{equation}\label{eq:intro-general-lower}
 \liminf_{k\to\infty}\frac{\cAH_k(U,\omega_0)}{\sqrt{k}}
 \ge b_{\AH}\sqrt{|U|}.
\end{equation}
If $U$ is bounded, we also have
\begin{equation}\label{eq:intro-general-upper}
 \limsup_{k\to\infty}\frac{\cAH_k(U,\omega_0)}{\sqrt{k}}
 \le b_{\AH}\sqrt{|\overline U|}.
\end{equation}

\end{theorem}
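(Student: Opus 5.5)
We prove Theorem~\ref{thm:intro-weyl} in the style of the ECH and Gutt--Hutchings Weyl laws, with the Euclidean min--max sequence $W_k(D,g_0)$ as the central object. The constant $b_{\AH}$ will be the Weyl coefficient for cubes: set
\[
 b_{\AH}=\lim_{k\to\infty}\frac{W_k(Q,g_0)}{\sqrt{k\,|Q|}}
\]
for a unit cube $Q$. Our first task is to show that this limit exists.

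\textbf{Step 1 (subadditivity and existence of the limit).} Two inequalities for $W_k$ drive the argument.
\begin{itemize}
\item A Lusternik--Schnirelmann type disjoint union inequality: if $D_1,\dots,D_m\subset D$ are disjoint, then
\[
 W_{k_1+\dots+k_m}(D)\ge \min\sum_j W_{k_j}(D_j).
\]
The precise form will be as in Gutt--Hutchings, proved by restricting a sweepout of $D$ to the pieces. We use that $\mu_D$ restricts to $\mu_{D_j}$, together with a cup-product/ideal argument: if $F^*\mu^{k}\neq0$, then the sets where $\En^{D_j}<W_{k_j}$ cannot cover $K$.
\item A gluing upper bound in the reverse direction, with an error term controlled by energy near the interfaces.
\end{itemize}
By scaling, $W_k(rQ,g_0)=r^2W_k(Q,g_0)$. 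Tiling a cube by $N^4$ subcubes then gives
\[
 W_{N^4k}(Q)\ge N^4\,N^{-2}\,W_k(Q)=N^{2}\,W_k(Q).
\]
This is precisely the scaling $\sqrt{k}$ times area. Combined with monotonicity in $k$, a Fekete-type argument yields existence of the limit.

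\textbf{Step 2 (bounds $\sqrt2\le b_{\AH}\le 2$).} For the upper bound, we use Theorem~\ref{m:thm:models}: $\cAH_k(B(a))\le a\,d(k)\sim a\sqrt{2k}$. Packing a cube by balls fails to give a sharp constant, so we compute on the polydisk or cylinder instead. Cylinders give $ka$, which is too large, so instead we use the Seiberg--Witten sweepouts on $\CP^2$ and $S^2\times S^2$ from Section~\ref{m:sec-upper}. Fill a cube nearly by a domain whose volume is comparable to the ball; $d(k)\approx\sqrt{2k}$ with $|B(a)|=a^2/2$ gives ratio $\sqrt{2k}\,a/\sqrt{k\,a^2/2}=2$, hence $b_{\AH}\le 2$ after a ball-packing/volume-filling comparison. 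For the lower bound, we pack disjoint small balls of total volume approaching $|U|$ and apply the disjoint union inequality with the normalization $\cAH_1(B(a))=a$ (Theorem~\ref{c:thm:main}). For a single $k$ split into $m$ balls with capacity $a$ each, this gives $W_{m}\gtrsim ma$ with $m\,a^2/2\approx|U|$, so $W_m/\sqrt m\ge\sqrt{2|U|}/\sqrt{\ldots}$. Optimizing yields the factor $\sqrt2$.

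\textbf{Step 3 (general domains).} The lower bound \eqref{eq:intro-general-lower} follows by exhausting $U$ by finite unions of cubes $D\Subset U$ and applying the disjoint union inequality with $W_k(D,g_0)\le\cAH_k(U)$. For bounded $U$, we obtain the upper bound \eqref{eq:intro-general-upper} by covering $\overline U$ by a finite union of cubes $C$ of volume close to $|\overline U|$. For every $D\Subset U$ and every compatible $J$, we must show $W_k(D,g_J)\le W_k(C,g_0)+o(\sqrt k)$.

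\textbf{Main obstacle.} The main difficulty is this last comparison, since $J$ is arbitrary. We plan to handle it by constructing the upper-bound sweepouts intrinsically from perturbed Seiberg--Witten moduli on a closed symplectic manifold containing $D$ (a torus or $\CP^2$ containing the cube). There the energy bound is by $\omega$-area rather than by the metric, via the Taubes vortex and $J$-holomorphic curve asymptotics, so the estimate is uniform in $J$. Local gluing across cube boundaries with controlled error also needs care; we expect these errors to be $O(k^{1/4})$ boundary terms. Finally, $|\partial U|=0$ makes the two bounds coincide, which gives \eqref{eq:intro-optimized-weyl}.
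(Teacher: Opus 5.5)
The gap is in how you fix the constant. You define $b_{\AH}$ from the fixed-metric sequence $W_k(Q,g_0)$. The theorem, however, is about $\cAH_k$, which is a supremum of $W_k(D,g_J)$ over inner domains \emph{and} over all compatible $J$.

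With your normalization the lower bound \eqref{eq:intro-general-lower} is harmless: take the standard $J$. The upper bound, though, needs exactly the comparison you call the main obstacle, $W_k(D,g_J)\le W_k(C,g_0)+o(\sqrt k)$ uniformly in $J$. Nothing available gives this. The paper proves only $b_{\AH}^{\mathrm{flat}}\le b_{\AH}$, and finishing your argument would prove $b_{\AH}^{\mathrm{flat}}=b_{\AH}$.

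Your proposed remedy cannot supply this comparison. The Seiberg--Witten sweepouts are uniform in $J$, but they only give the non-sharp constant $2$ (for instance $\cAH_k(B(a))\le a\,d(k)\sim2\sqrt{k\,|B(a)|}$), not the unknown constant of $W_k(\cdot,g_0)$. Moreover, $T^4$ does not fit the $b_1=0$, $b_2^+=1$ chamber argument of Section~\ref{m:sec-upper}. The ``gluing upper bound with interface errors'' in Step~1 is likewise unproved, and it is not needed.

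The same mismatch breaks your Step~2 lower bound. Near-full packings by equal balls are available only through symplectic embeddings, which do not preserve $g_0$. Equal Euclidean round balls, on the other hand, cannot fill a region with density close to one. So the packing argument gives nothing sharp about a constant defined from $W_k(\cdot,g_0)$.

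The repair, which is the paper's route, is to anchor the constant to the capacity itself: set $b_{\AH}=\sup_p\cAH_p(I)/\sqrt p$ for the unit cube $I$. This is finite because $\cAH_k(I)\le\cAH_k(B(a_+))\le a_+d(k)\le C\sqrt k$. Every later step then uses only properties of $\cAH_k$ and never compares different $J$:
\begin{itemize}
\item translation invariance and $\cAH_k(rU)=r^2\cAH_k(U)$;
\item monotonicity in $k$ and under inclusion;
\item the capacity localization of Lemma~\ref{c:prop:optimized-local}, whose proof already joins the inner domains and extends the $J_i$ to one compatible $J$.
\end{itemize}
Your self-similarity inequality then gives the cube limit: every ratio is at most the supremum, and the liminf is at least each $\cAH_p(I)/\sqrt p$.

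For the upper bound on a smooth domain $\Omega$, no gluing is needed. Take an outer cube $I_0\supset\overline\Omega$ and disjoint cubes $I_i\subset I_0\setminus\overline\Omega$ of total volume $S$. Localize in $I_0$ with $k_i=\lfloor k|I_i|/|\Omega|\rfloor$ and $K=k+\sum_ik_i$:
\[
 \cAH_k(\Omega)\le\cAH_K(I_0)-\sum_i\cAH_{k_i}(I_i).
\]
The cube limits then give $\limsup_k\cAH_k(\Omega)/\sqrt k\le b_{\AH}\bigl[\sqrt{|I_0|(1+S/|\Omega|)}-S/\sqrt{|\Omega|}\bigr]$, which tends to $b_{\AH}\sqrt{|\Omega|}$ as $S\uparrow|I_0|-|\Omega|$. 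For bounded $U$, enclose $\overline U$ in a connected smooth domain of volume at most $|\overline U|+\delta$.

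Finally, get the coefficient bounds from the Weyl limit $b_{\AH}a/\sqrt2$ on the ball itself. The bound $\cAH_k(B(a))\le a\,d(k)$ with $d(k)/\sqrt k\to\sqrt2$ gives $b_{\AH}\le2$. For the other direction, use the McDuff--Polterovich packings $\bigsqcup_{i=1}^{m^2}B((1-\zeta)a/m)\hookrightarrow B(a)$. With localization and $\cAH_1(B(b))=b$ they give $\cAH_{m^2}(B(a))\ge(1-\zeta)ma$, hence $b_{\AH}\ge\sqrt2$.
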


We also state a Weyl law before taking the supremum over compatible
metrics. We use $W_k(D,g_0)$ as defined in
\eqref{eq:family-scale-limit}. In particular, the standard Euclidean
metric is fixed.

\Needspace{12\baselineskip}
\begin{theorem}\label{thm:intro-fixed-weyl}
There exists a universal constant $b_{\AH}^{\mathrm{flat}}$ with
\[
 0<b_{\AH}^{\mathrm{flat}}\le b_{\AH}\le2
\]
such that, for every nonempty bounded connected smooth domain
$D\subset\R^4$,
\begin{equation}\label{eq:intro-fixed-weyl}
 \lim_{k\to\infty}\frac{W_k(D,g_0)}{\sqrt{k}}
 =b_{\AH}^{\mathrm{flat}}\sqrt{|D|}.
\end{equation}

\end{theorem}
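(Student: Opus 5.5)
We would prove Theorem~\ref{thm:intro-fixed-weyl} by the standard subadditivity scheme for Weyl laws: first get the law on cubes, then pass to general domains by packing and covering with small cubes. The key structural input is that $W_k(D,g_0)$ behaves additively under disjoint unions of domains: if $D\supset D_1\sqcup\dots\sqcup D_m$, then
\[
 W_{k_1+\dots+k_m}(D,g_0)\ \ge\ \sum_j W_{k_j}(D_j,g_0)-o(1).
\]
We would try to prove this "disjoint union lower bound" by restricting a sweepout of $D$ to each $D_j$. The energy is a sum of the local energies, and restriction maps $\B^*(D)\to\B^*(D_j)$ are defined away from the locus $u|_{D_j}\equiv0$. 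Since $\mu_D$ pulls back to each $\mu_{D_j}$, the cup product $\mu_D^{k}$ with $k=\sum k_j$ should fail to vanish on a suitable subcomplex. The Lusternik--Schnirelmann type argument is: if each piece had energy below $w_{\eps,k_j}(D_j)$ on an open set $U_j$ with $\mu^{k_j}|_{U_j}=0$, then the $U_j$ cover $K$ and $\mu^k=0$. The bad set where $u|_{D_j}\equiv0$ has energy at least $|D_j|/(8\pi\eps^2)$, so for small $\eps$ it is excluded.

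We also need a matching upper bound, which will be the main obstacle. For $D$ covered by cubes $Q_1,\dots,Q_m$ with small overlap, we would glue optimal sweepouts of the cubes into a sweepout of $D$. Products of sweepouts give a family on the join or product $K_1\times\dots\times K_m$ on which $\mu^{\sum k_j}\neq0$, by a Künneth argument, with total energy at most $\sum_j\max\En(F_j)$ plus boundary error.

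The gluing has to handle gauge and boundary matching. Near $\partial Q_j$ we would cut off to the vacuum $(u,A)=(1,0)$ in a collar of width $\delta$, using a gauge fixing that is continuous over the family. This is where we expect the difficulty. We would first try to work with sweepouts whose fields are vacuum near the boundary, which requires a lemma that $w_{\eps,k}$ of a cube changes by $O(k^{1/2}\delta)$ when we impose vacuum boundary conditions. The cost should be controlled using the $\eps$-uniform energy bounds in the Pigati--Stern theory.

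With these two inequalities in hand, the argument proceeds as follows.

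\emph{Cube scaling.} By dilation, $W_k(\lambda Q,g_0)=\lambda^2W_k(Q,g_0)$, since the $\eps$-energy scales by $\lambda^2$ with $\eps\mapsto\eps/\lambda$ and $W_k$ is a liminf in $\eps$. Applying the subadditivity to $N^4$ subcubes of the unit cube $Q$ gives
\[
 W_{N^4k}(Q)\ \ge\ N^4N^{-2}W_k(Q)=N^2W_k(Q)
\]
and the matching upper bound. Hence $f(k)=W_k(Q)/\sqrt k$ is almost monotone along these subsequences.

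\emph{Existence of the limit.} A Fekete-type argument gives $\lim f(k)=b^{\mathrm{flat}}_{\AH}$, using monotonicity in $k$ to interpolate between the values $N^4k$. Positivity comes from packing $k$ disjoint cubes of side $k^{-1/4}$, each with $W_1\gtrsim$ side$^2$; the bound $W_1\gtrsim$ side$^2$ follows from the lower bound in Theorem~\ref{c:thm:main} through ball inclusion.

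\emph{General domains.} For bounded smooth $D$, inner approximations by grid cubes give the liminf bound, and outer coverings give the limsup bound, since $|\partial D|=0$.

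\emph{Comparison with $b_{\AH}$.} Since $W_k(D,g_0)\le\cAH_k(D)$ (take $J=J_0$), Theorem~\ref{thm:intro-weyl} gives $b^{\mathrm{flat}}_{\AH}\le b_{\AH}\le2$.
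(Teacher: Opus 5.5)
\paragraph{Where the proposal breaks.} Your lower-bound step is the paper's Lemma~\ref{c:prop:fixed-local}: restrict a sweepout to the disjoint $D_j$, cover $K$ by the sets $\{e_j<L_j\}$, and kill $\mu^k$ with the relative cup product. The self-similarity $W_{N^4k}(Q)\ge N^2W_k(Q)$ is also the paper's. The genuine gap is the upper bound. Three parts of your plan rest on a gluing inequality you do not prove: finiteness of $b^{\mathrm{flat}}_{\AH}$, hence the cube limit, and the limsup on general domains via outer coverings.

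The mechanism you suggest for that inequality cannot work. Fix $x_0$. If every configuration in a family $F:K\to\B^*(Q)$ has $u(x_0)\neq0$, then $(u,A)\mapsto u(x_0)$ transforms by $\gamma(x_0)$. It is therefore a nowhere-zero section of $F^*\LL_{x_0}$, and since $\mu_Q$ does not depend on the evaluation point, $F^*\mu_Q=0$. This has two consequences.
\begin{enumerate}
\item Families cut off to the vacuum in a collar of $\partial Q_j$ are never sweepouts. The constrained min--max you want to compare with $w_{\eps,k}$ is an infimum over the empty set, not $w_{\eps,k}+O(k^{1/2}\delta)$.
\item Your glued family on $D$ has $u\neq0$ at every collar point for every parameter. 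Its evaluation class vanishes identically, so no K\"unneth argument can make it a sweepout.
\end{enumerate}
In any sweepout, zeros must pass through every point, so the zero sets of the pieces would have to cross the cube boundaries. Controlling the energy of that is the real cut-and-paste problem, and your proposal does not address it.

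\paragraph{The missing idea: no gluing is needed.} Finiteness comes from the Seiberg--Witten ball bound. For $\overline I\subset D_0\Subset B(a_+)$, Lemma~\ref{m:sw-ball} gives
\[
 W_k(I,g_0)\le W_k(D_0,g_0)\le\cAH_k(B(a_+))\le a_+d(k)=O(\sqrt k).
\]
The SW-free bound $k^2a$ is too weak here. So $b^{\mathrm{flat}}_{\AH}:=\sup_pW_p(I,g_0)/\sqrt p<\infty$. Your superadditivity together with monotonicity in $k$ then gives $\liminf\ge\sup$, which is the cube limit.

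For the upper bound on a smooth domain $\Omega$, place $\Omega$ in an open cube $I_0$. Choose disjoint cubes $I_1,\dots,I_m\subset I_0\setminus\overline\Omega$, and write $v=|\Omega|$, $V_0=|I_0|$, $v_i=|I_i|$, $S=\sum_iv_i$. Apply the same one-sided localization inequality in $I_0$, with $k_i=\lfloor kv_i/v\rfloor$ and $K=k+\sum_ik_i$:
\[
 W_K(I_0)\ge W_k(\Omega)+\sum_iW_{k_i}(I_i).
\]
Subtracting the known cube asymptotics gives
\[
 \limsup_{k\to\infty}\frac{W_k(\Omega)}{\sqrt k}\le b^{\mathrm{flat}}_{\AH}\Bigl[\sqrt{V_0(1+S/v)}-\frac{S}{\sqrt v}\Bigr].
\]
The right side tends to $b^{\mathrm{flat}}_{\AH}\sqrt v$ as $S\uparrow V_0-v$, which is possible because $|\partial\Omega|=0$. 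This is the paper's route, following LMN.

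\paragraph{A smaller point.} The inequality $W_k(D,g_0)\le\cAH_k(D)$ does not follow directly from \eqref{eq:family-capacity}, which only uses domains $D'\Subset D$. The paper gets $b^{\mathrm{flat}}_{\AH}\le b_{\AH}$ by choosing $\overline{I_r}\subset D_r\Subset I$, noting $r^2W_k(I)=W_k(I_r)\le W_k(D_r)\le\cAH_k(I)$, and letting $r\uparrow1$.
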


\subsection{Historical background and motivation}\label{sec:intro-background}
In Riemannian geometry, Gromov introduced a sequence of min--max
invariants by using sweepouts and cup powers of cohomology classes
\cite{Gromov2003}.

Almgren studied the topology of cycle spaces \cite{Almgren1962}.
For hypersurfaces, we use the component containing zero in the
space of $\Z/ 2\Z$ cycles. We use the generator of its
cohomology with $\Z/ 2\Z$ coefficients. A $p$-sweepout detects the
$p$th cup power of this generator.  We take the min-max  over the sweepouts and this gives
the $p$-width $\omega_p(Y,g)$. The sequence is nondecreasing and is
called the \emph{volume spectrum}. Gromov and Guth proved bounds
for the growth of these values \cite{Gromov2003,Guth2009}.
Marques--Neves used the volume spectrum in the Almgren--Pitts
min--max theory of minimal hypersurfaces
\cite{Pitts1981,MarquesNeves2017}.

Gromov conjectured a Weyl law for this spectrum
\cite{Gromov2003}. Liokumovich--Marques--Neves proved
this conjecture \cite[Theorem 1.1]{LMN2018}. For a closed connected
Riemannian manifold $(Y^n,g)$, they proved
\[
 \lim_{p\to\infty}\frac{\omega_p(Y,g)}{p^{1/n}}
 =a_n\Vol_g(Y)^{(n-1)/n},\qquad a_n>0.
\]
Here $a_n$ depends only on the dimension.
Liokumovich--Marques--Neves also proved a Weyl law for
cycles of higher codimension on Euclidean domains under
a homological assumption \cite[Theorem~1.3]{LMN2018}.
Guth--Liokumovich proved a Weyl law for one-dimensional
cycles with $\Z/2\Z$ coefficients on compact Riemannian
three-manifolds \cite[Theorem~1.1]{GuthLiokumovich2025}.
This is a Weyl law in codimension two.
Staffa extended the result for one-dimensional cycles to
compact Riemannian manifolds of arbitrary dimension
\cite[Theorem~1.1]{Staffa2024}.
These results use $\Z/2\Z$ coefficients.
To our knowledge, the analogous Weyl law for integral cycles
in codimension two remains open in general.

In symplectic geometry, capacities were introduced to measure the
size of symplectic manifolds and to give obstructions to
symplectic embeddings \cite{EkelandHofer1989,HoferZehnder1994}.
Ekeland--Hofer used equivariant min--max for the Hamiltonian action
functional to construct an infinite sequence of capacities
\cite{EkelandHofer1990}. We now recall some constructions in
dimensions three and four.

Taubes proved that the Seiberg--Witten invariants of closed
symplectic four-manifolds agree with the Gromov invariants defined
by pseudoholomorphic curves \cite{Taubes2000}.  In dimension three, Taubes proved that
Hutchings's embedded contact homology (ECH) is isomorphic to
monopole Floer cohomology \cite[Theorem 1]{Taubes2010ECHI} 
($\mathrm{ECH}\cong\mathrm{HM}$).

Hutchings defined ECH capacities by using filtered ECH on the
boundary of a four-dimensional Liouville domain
\cite[Section 4.1]{Hutchings2011}. This gives an infinite sequence
of symplectic capacities. The proof of monotonicity under
symplectic embeddings uses cobordism maps from Seiberg--Witten
theory. 

Cristofaro-Gardiner--Hutchings--Ramos proved a Weyl law for ECH
capacities \cite{CGHR2015}. Let $(X,\omega)$ be a
four-dimensional Liouville domain. Suppose that all of its ECH
capacities are finite. Then
\[
 \lim_{k\to\infty}\frac{c_k^{\mathrm{ECH}}(X,\omega)^2}{k}
 =4\Vol_\omega(X),\qquad
 \Vol_\omega(X)=\frac12\int_X\omega\wedge\omega.
\]
This formula follows from their Weyl law for ECH spectral
invariants on closed contact three-manifolds
\cite{CGHR2015}.

Hutchings later defined another sequence of capacities
\cite{Hutchings2022}. He fixes an almost complex structure and
$k$ distinct points. He considers pseudoholomorphic curves through
these points and takes the infimum of their energies. He then
takes the supremum over the almost complex structures and the
points. For a Liouville domain, he uses curves in its completion.
The definition extends to symplectic four-manifolds. The definition
and the proofs of the capacity axioms do not use ECH or
Seiberg--Witten theory. These capacities agree with ECH capacities
on convex and concave toric domains. They also satisfy the
Euclidean Weyl law on compact smooth domains in $\R^4$
\cite{Hutchings2022}.

Beiner studied these alternative capacities on closed symplectic
four-manifolds. He proved a Weyl law with bounded error when
$b_2^+=1$ \cite{Beiner2026}.

Hutchings also defined elementary spectral invariants for closed
contact three-manifolds \cite{Hutchings2024}. These invariants give
quantitative closing lemmas for Reeb flows. Their definition does
not use the $\mathrm{ECH}\cong\mathrm{HM}$ isomorphism. However,
to prove the Weyl law for arbitrary closed contact three-manifolds,
he uses a comparison with ECH and hence Seiberg--Witten theory
\cite[Section 6]{Hutchings2024}.
Wormleighton and Chaidez--Wormleighton also studied algebraic
capacities of polarized algebraic surfaces
\cite{Wormleighton2022,ChaidezWormleighton2021}.

These Weyl laws have applications to density problems. Irie used
the Weyl law for ECH spectral invariants to prove a $C^\infty$
closing lemma for three-dimensional Reeb flows
\cite[Lemma 3.1]{Irie2015}. In particular, for a $C^\infty$-generic
contact form on a closed contact three-manifold, the union of
periodic Reeb orbits is dense \cite[Theorem 1.1]{Irie2015}.
Irie--Marques--Neves then used the Weyl law for the volume spectrum
to prove density of minimal hypersurfaces \cite{IMN2018}.
For a $C^\infty$-generic metric on a closed manifold of dimension
three through seven, the union of closed smooth embedded minimal
hypersurfaces is dense. Marques--Neves--Song later proved
equidistribution of minimal hypersurfaces for generic metrics in
these dimensions \cite{MNS2019}.

Both density proofs use local perturbations and the Weyl law.
One changes the contact form or the metric in an open set to
change the volume. The Weyl law then forces some spectral values
to change \cite{Irie2015,IMN2018}. 

These results naturally lead to the following question.

\begin{question}
The density of periodic Reeb orbits follows from the Weyl law
for ECH spectral invariants, whose proof uses Seiberg--Witten
theory. The density of minimal hypersurfaces follows from the
Weyl law for the volume spectrum defined by Almgren--Pitts
sweepouts.
Do these two results follow from a common underlying principle?
\end{question}

We next recall the Allen--Cahn construction of minimal
hypersurfaces. This construction uses real-valued functions
instead of cycles. Let $(Y^n,g)$ be a closed connected Riemannian
manifold with $n\ge3$. The Allen--Cahn energy and the equation for
its critical points are given by
\[
 \begin{aligned}
 E^{\mathrm{AC}}_{\eps,g}(v)
 &=\int_Y\left(\frac\eps2|dv|_g^2+
                      \frac{W(v)}\eps\right)dV_g,\\
 -\eps^2\Delta_gv+W'(v)&=0,
 \qquad \Delta_g=\diver_g\nabla_g.
 \end{aligned}
\]
Here $W:\R\to[0,\infty)$ is a smooth even double-well
potential with nondegenerate minima at $\pm1$ and
$W(\pm1)=0$. A standard example is
\[
 W(v)=\frac14(1-v^2)^2.
\]
The energy of one transition is
\[
 \sigma_W=\int_{-1}^1\sqrt{2W(s)}\,ds.
\]
Modica--Mortola and Modica proved the relation between these
energies and perimeter by $\Gamma$-convergence
\cite{ModicaMortola1977,Modica1987}. Hutchinson--Tonegawa studied
the varifold limits of critical points
\cite{HutchinsonTonegawa2000}. Guaraco used min--max for this energy
to obtain minimal hypersurfaces \cite{Guaraco2018}.
Gaspar--Guaraco used a cohomological index for the symmetry
$v\mapsto-v$ to define higher min--max values $c_\eps(p)$
\cite{GasparGuaraco2018}. They then proved a Weyl law for the
phase-transition spectrum \cite{GasparGuaraco2019}.

Dey proved that the Allen--Cahn spectrum agrees with the volume
spectrum after normalization
\cite[Theorems 1.1 and 1.2]{Dey2022}. He proved the reverse of the
comparison inequality obtained in the earlier Allen--Cahn papers.
The two inequalities give, for each fixed $p$,
\[
 \lim_{\eps\downarrow0}\frac{c_\eps(p)}{\sigma_W}
 =\omega_p(Y,g).
\]
Our $\sigma_W$ is $2\sigma$ in the notation of \cite{Dey2022}.
Thus min--max for the Allen--Cahn energy gives the same widths as
min--max for cycles. These widths were used in the
density theorem of minimal hypersurfaces.

Pigati--Stern studied the self-dual Abelian--Higgs model as a
codimension-two version of the Allen--Cahn theory
\cite{PigatiStern2021}. The variables are a section of a Hermitian
line bundle and a unitary connection.   Pigati--Stern proved a convergence
result on a fixed closed oriented Riemannian manifold. For a
sequence of critical points with uniformly bounded energy, a
subsequence of the normalized energy measures converges as
$\eps\to0$ to the weight measure of a stationary integral
codimension-two varifold \cite[Theorem 1.1]{PigatiStern2021}.
They also constructed nontrivial critical points on the trivial
bundle by min--max. They proposed higher min--max constructions
using the topology of the gauge quotient
\cite[Section 7]{PigatiStern2021}.

Parise--Pigati--Stern proved $\Gamma$-convergence of the
normalized AH energies to the mass of integral cycles in
codimension two \cite[Theorem~1.2]{ParisePigatiStern2024}.
They also compared AH min--max values with Almgren--Pitts
widths defined using cycles with integer coefficients.
Their comparison concerns homotopy classes represented by
families with one or two parameters.
For these classes, the Almgren--Pitts widths are bounded
above by the lower limits of the AH min--max values as
$\eps\downarrow0$, after dividing the AH energy by $2\pi$
\cite[Theorem~5.5]{ParisePigatiStern2024}.
This comparison does not identify the higher AH spectrum
with the spectrum of integral cycles.

In this paper, we use powers of an evaluation class of
degree two with integer coefficients to define higher AH
min--max values. We prove their Euclidean Weyl law in
dimension four without identifying them with the widths
of integral cycles.

For a fixed metric, the AH energy is related to minimal
submanifolds of codimension two. In dimension four, Taubes used
planar vortices to relate Seiberg--Witten solutions to
pseudoholomorphic curves \cite{Taubes1980,Taubes1996}. We therefore
study the AH energy to understand the relation between Riemannian
min--max theory and symplectic capacities.

These results lead us to a comparison with other capacities.
Write $c_k^{\mathrm{Alt}}$ for Hutchings's alternative capacities.
They need not agree with ECH capacities on all Liouville domains
\cite[Remark 6.3]{Hutchings2022}. We state the following conjecture
for star-shaped domains in $\R^4$.

\Needspace{9\baselineskip}
\begin{conjecture}\label{conj:capacity-comparison}
Let $X\subset\R^4$ be a compact star-shaped domain with smooth
boundary. Then, for every
$k\ge1$,
\[
 \cAH_k(X,\omega_0)
 =c_k^{\mathrm{Alt}}(X,\omega_0)
 =c_k^{\mathrm{ECH}}(X,\omega_0).
\]
\end{conjecture}

Hutchings proved the equality of the last two capacities for
convex and concave toric domains \cite{Hutchings2022}.

\subsection{Outline of the proofs}\label{sec:intro-proofs}
In Section~\ref{c:sec:topology}, we prepare the properties of the
evaluation class and construct projective families. In
Section~\ref{c:sec:cylinder}, we use these families to prove the
upper bound on the cylinder. We combine Gromov's family of
$J$-holomorphic spheres with Taubes's planar vortex. The form of
Gromov's result \cite{Gromov1985} which we use is stated in
Theorem~\ref{c:thm:gromov-ruling}. We place a vortex in the normal
directions to each sphere. We then apply the symmetric product
construction of Section~\ref{c:sec:products} to obtain the
families needed for the upper bound.

For the lower bound on the ball, we first approximate the metric
on the doubled ball by smooth metrics. Allard compactness and
Brendle's vector field give a sharp limiting bound for the mass
\cite{Allard1972,Brendle2012}. By using the closed AH theory, we
obtain a gap in the positive critical values. We then use an
equivariant deformation to trivialize the evaluation line on a low
sublevel. Finally, we transfer this trivialization to the ball by
reflection and a gauge correction
(Sections~\ref{c:sec:double} and~\ref{c:sec:closed-AH}). These
estimates give the first normalization. We use it to prove the
capacity axioms in Section~\ref{c:sec:capacity}.

The proof through Section~\ref{c:sec:capacity} does not use
Seiberg--Witten theory. The cylinder families give the bound
$\cAH_k(Z^4(a))\le k^2a$, which is enough for the capacity axioms.

For the higher estimates and the Weyl laws, we first prove
localization in Section~\ref{c:sec:localization}. We use relative
cup products on finite parameter complexes for a fixed metric.
We then obtain the localization inequality for the capacities.
In Section~\ref{m:sec-comparison}, Taubes's estimates give
\[
 \En^X_{\eps_r,g}(\alpha,a)=c_1(E)\cdot[\omega]+o(1)
\]
for the scalar component of a Seiberg--Witten solution.
In Section~\ref{m:sec-upper}, we use regular moduli spaces on
$\CP^2$ and $S^2\times S^2$. Their top integral evaluation pairings
are nonzero. We restrict the fields to domains on which the line
bundle is trivial. This gives AH sweepouts and proves the ball
and cylinder upper bounds. Localization and the first
normalization give the opposite bound on the cylinder. Together,
these prove Theorem~\ref{m:thm:models}.

In Section~\ref{m:sec-weyl}, we prove the two Weyl laws together.
Both sequences satisfy the scaling and localization properties
needed for the subdivision argument. We first prove the limit on
the unit cube. We then prove the limit on smooth domains. For the
upper bound, we use localization in an outer cube and subtract the
contributions of cubes in the complement. Thus we do not need to
glue arbitrary AH families. This argument gives the domain limit
for the standard metric as well as for the capacities. For the
capacities, we also use inner and outer approximation to obtain
the bounds on general connected open sets.

Finally, comparison on inner cubes gives
$b_{\AH}^{\mathrm{flat}}\le b_{\AH}$. The upper bound on the ball
gives $b_{\AH}\le2$, and symplectic packings by equal balls give
$b_{\AH}\ge\sqrt2$ \cite{McDuff2009,McDuffPolterovich1994}.
These estimates complete the proofs of
Theorems~\ref{thm:intro-weyl} and~\ref{thm:intro-fixed-weyl}.

\paragraph{Acknowledgements.}
The author is particularly grateful to Alessandro Pigati for
pointing out that the author's original argument for the ball
upper bound also applies to the cylinder. This observation
led to the sharp cylinder estimate in
Theorem~\ref{m:thm:models}. The author also thanks Daniel Stern for his comments and suggestions, and Jin Miyazawa for his interest in this work.

\paragraph{Use of AI.}
The author used ChatGPT6 Astra as an assistive tool in preparing this paper. The author used it to discuss and improve the clarity and English presentation of his original ideas and proofs, and to check for errors in spelling and notation. Regarding the mathematical content, the central idea of taking the double of the ball and applying Brendle's vector field to prove the lower bound for the symplectic capacity of the ball in Sections 4 and 5 was suggested by it.

\section{The evaluation class and families of configurations}\label{c:sec:topology}

\subsection{Energy bounds and scaling}
\begin{lemma}\label{c:lem:elementary}
For every $\eps>0$ and $k\ge1$, we have
\begin{equation}\label{c:eq:raw-upper-barrier}
 0\le w_{\eps,k}(D,g)\le\frac{\Vol_g(D)}{8\pi\eps^2}.
\end{equation}
Both $w_{\eps,k}(D,g)$ and $W_k(D,g)$ are nondecreasing in $k$.
\end{lemma}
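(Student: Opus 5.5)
The lower bound $w_{\eps,k}\ge0$ is immediate: the integrand in \eqref{c:eq:energy} is pointwise nonnegative, so every value $\En^D_{\eps,g}(F(z))$ is nonnegative. The same holds for any maximum over $K$ and any infimum over sweepouts, provided $\mathscr S_k(D)$ is nonempty. Nonemptiness comes out of the upper bound construction below.

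For the upper bound I will use the projective families mentioned after \eqref{c:eq:eval-line}. Fix $k\ge1$ and choose $k+1$ linearly independent smooth functions $f_0,\dots,f_k$ on $\overline D$. For instance, take $f_j=\phi^j$ for a nonconstant smooth $\phi$; these are independent because $D$ is connected and $\phi$ takes infinitely many values. The goal is a family of configurations with small energy, and the main point is that the potential term can be made to dominate everything else. I would take $A=0$ and $u=t\sum_j\lambda_jf_j$ with $t>0$ small. Then the curvature term vanishes. The potential term is at most $\Vol_g(D)/(4\eps^2)$ and tends to it as $t\to0$. The gradient term is $O(t^2)$ uniformly on the unit sphere $\{\lambda\in\C^{k+1}:|\lambda|=1\}$. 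Since energy and gauge class are invariant under $\lambda\mapsto e^{i\theta}\lambda$ (a constant gauge transformation), the map descends to a continuous map $F_t:\CP^k\to\B^*(D)$. Here $u\not\equiv0$ by linear independence, and continuity into the Fréchet quotient is clear.

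Next I check the topology, which I expect is the only genuinely non-formal step. Evaluating at $x$, the pullback $F_t^*\LL_x$ is the quotient of $S^{2k+1}\times\C$ by $(\lambda,z)\sim(e^{i\theta}\lambda,e^{i\theta}z)$. This is the tautological bundle or its dual, depending on conventions, so $F_t^*\mu_D=\pm h$ with $h$ the hyperplane generator. Hence $(F_t^*\mu_D)^k=\pm h^k\neq0$ in $H^{2k}(\CP^k;\Z)$. Triangulating $\CP^k$ gives an AH $k$-sweepout, so $\mathscr S_k(D)\neq\emptyset$. The resulting estimate is
\[
w_{\eps,k}\le \sup_{\CP^k}\En(F_t)\le \frac{1}{2\pi}\Big(\frac{\Vol_g(D)}{4\eps^2}+Ct^2\Big).
\]
Letting $t\to0$ gives $\Vol_g(D)/(8\pi\eps^2)$. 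Only the bound on the maximum is needed, so the infimum over $t$ suffices.

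Monotonicity in $k$ follows from the ring structure: if $(F^*\mu_D)^{k+1}\neq0$, then $(F^*\mu_D)^k\neq0$, since otherwise multiplying by $F^*\mu_D$ would kill the $(k+1)$-st power. Thus $\mathscr S_{k+1}(D)\subset\mathscr S_k(D)$, giving $w_{\eps,k}\le w_{\eps,k+1}$ for each $\eps$. Taking $\liminf_{\eps\downarrow0}$ preserves the inequality, so $W_k\le W_{k+1}$. The main care point is the sign and identification of $F_t^*\LL_x$ with $\mathcal O(\pm1)$, and the sign is irrelevant for nonvanishing of the $k$-th power.
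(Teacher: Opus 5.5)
Your proposal is correct and follows essentially the same route as the paper: the projective family $\lambda\mapsto(t\sum_j\lambda_jf_j,0)$ descends to $\CP^k$, its energy tends uniformly to $\Vol_g(D)/(8\pi\eps^2)$ as $t\to0$, and its pullback class generates $H^2(\CP^k;\Z)$ (the sign is irrelevant, as you note). Monotonicity then comes from the inclusion $\mathscr S_{k+1}(D)\subset\mathscr S_k(D)$, exactly as in the paper.
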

\begin{proof}[\bf Proof of Lemma \ref{c:lem:elementary}]
We choose linearly independent smooth complex functions $s_0,\ldots,s_k$ on
$\overline D$. For $\delta>0$, we consider
\[
 S^{2k+1}\longrightarrow\Conf^*(D),\qquad
 z\longmapsto\left(\delta\sum_{j=0}^kz_js_j,0\right).
\]
This map is equivariant under constant gauge transformations and induces
$F_\delta:\CP^k\to\B^*(D)$. The weight-one convention in
\eqref{c:eq:eval-line} gives $F_\delta^*\mu_D=h=c_1(\mathcal O(1))$.
Thus, after choosing a finite triangulation of $\CP^k$, we obtain an
AH $k$-sweepout.

For fixed $\eps,k$, the energy converges uniformly on the family to
$\Vol_g(D)/(8\pi\eps^2)$ as $\delta\to0$. By taking the infimum, we obtain \eqref{c:eq:raw-upper-barrier}.
If $(F^*\mu_D)^{k+1}\ne0$, then $(F^*\mu_D)^k\ne0$. Hence every $(k+1)$-sweepout is also a
$k$-sweepout. This proves monotonicity at each fixed scale.
Taking the lower limit gives the other monotonicity statement.
\end{proof}

We will also use
\begin{equation}\label{c:eq:zero-barrier}
 \En^D_{\eps,g}(0,A)\ge\frac{\Vol_g(D)}{8\pi\eps^2}.
\end{equation}
We note that this estimate concerns a section which is identically zero. We still exclude the zero section
from $\B^*(D)$. In \eqref{c:eq:raw-upper-barrier}, it only appears as the limit of a family of
nonzero sections. The finite scale upper bound tends to infinity as
$\eps\to0$.

If $\mu_D^k$ vanishes on $\{\En^D_{\eps,g}<L\}$, then $w_{\eps,k}(D,g)\ge L$. In particular, if the
evaluation line is trivial on this sublevel, the latter bound holds for
every $k\ge1$.

Let $\varphi:D\to D'$ extend to a smooth diffeomorphism of the closures. By
pullback, the configuration quotients, the energies for the
corresponding metrics, and the evaluation classes are identified. The
condition $F^*(\mu^k)\ne0$ is preserved, and we have
\begin{equation}\label{c:eq:naturality}
 w_{\eps,k}(D,\varphi^*g')=w_{\eps,k}(D',g'),\qquad
 W_k(D,\varphi^*g')=W_k(D',g').
\end{equation}

For the energy on a Hermitian line bundle over an $n$-dimensional
Riemannian manifold, a constant change of the metric gives
\begin{equation}\label{eq:general-metric-scaling}
 \En_{\eps,\lambda^2g}(u,a)
 =\lambda^{n-2}\En_{\eps/\lambda,g}(u,a),\qquad \lambda>0.
\end{equation}
In particular, the energy has the scale of area in dimension four.
For $t>0$, we set $n=4$ and $\lambda=\sqrt t$ in this identity and obtain
\begin{equation}\label{c:eq:scale-metric}
 \En_{\eps,tg}=t\En_{\eps/\sqrt t,g}.
\end{equation}
Neither the configuration quotient nor the sweepout condition depends
on the metric. Therefore
\begin{equation}\label{eq:scale-family}
 w_{\eps,k}(D,tg)=t\,w_{\eps/\sqrt t,k}(D,g),\qquad
 W_k(D,tg)=tW_k(D,g).
\end{equation}
For $D \subset \R^{4}$, we have
\begin{equation}\label{c:eq:scale-domain}
 w_{\eps,k}(rD,g_0)=r^2w_{\eps/r,k}(D,g_0),\qquad
 W_k(rD,g_0)=r^2W_k(D,g_0).
\end{equation}

\subsection{Projective families}\label{c:sec:products}
For the projective families below, we use $*=[(1,0)]$ and $h=c_1(\mathcal O(1))$.

\begin{lemma}\label{c:lem:disk}
Let $\widetilde F:\overline{\mathbb D}\to\Conf^*(D)$ be a continuous family of smooth configurations. Suppose
that
\[
 \widetilde F(e^{i\theta})=\gamma_\theta\cdot(1,0),\qquad
 \gamma_\theta\in\G(D),
\]
and that $\theta\mapsto\gamma_\theta(x_0)$ has degree one. Then the quotient family induces a map $F:\CP^1\to\B^*(D)$ such that $F^*\mu_D=h$.
\end{lemma}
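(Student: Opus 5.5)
The plan is to build $F$ by collapsing the boundary circle, and then compute $F^*\mu_D$ by clutching. Write $[\cdot]$ for the projection $\Conf^*(D)\to\B^*(D)$. On $\partial\overline{\mathbb D}$ we have $[\widetilde F(e^{i\theta})]=[(1,0)]$ for every $\theta$, so $[\widetilde F]$ is constant on the boundary circle. Hence it descends to a continuous map on $\overline{\mathbb D}/\partial\overline{\mathbb D}\cong S^2\cong\CP^1$, and we call this map $F$. We identify the collapsed circle with the point $*=[(1,0)]$ and the interior of the disk with $\CP^1\setminus\{*\}$. Continuity into the quotient topology is immediate, since $[\widetilde F]$ is continuous on the disk and factors through the collapse.

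Next we compute $F^*\LL_{x_0}$; the class does not depend on the choice of $x_0$. This bundle is the quotient of $\overline{\mathbb D}\times\C$ obtained by identifying the boundary fibres. Over the disk, $\widetilde F$ is a lift into $\Conf^*(D)$, so $z\mapsto[\widetilde F(z),w]$ gives a trivialization $\Phi_1:\overline{\mathbb D}\times\C\to F^*\LL_{x_0}$.

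Near $*$ we trivialize using the base configuration $(1,0)$. Its stabilizer is trivial, because $\gamma\cdot 1=1$ forces $\gamma\equiv1$. Over the collapsed circle, the fibre of $\LL_{x_0}$ at $[(1,0)]$ is therefore canonically $\C$ via $w\mapsto[(1,0),w]$.

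At a boundary point we have
\[
[\widetilde F(e^{i\theta}),w]=[\gamma_\theta\cdot(1,0),w]=[(1,0),\gamma_\theta(x_0)^{-1}w].
\]
So the clutching function of the bundle over $S^2$, comparing the disk trivialization with the trivialization at $*$, is $\theta\mapsto\gamma_\theta(x_0)^{\pm1}$. To make this a genuine clutching construction, we extend the trivialization at $*$ to a collar. We may replace $\widetilde F$ near the boundary by $\widetilde F(re^{i\theta})$ for $r$ close to $1$. Alternatively, we note that the bundle $F^*\LL_{x_0}$ is determined by $\Phi_1$ together with the boundary identification, which is exactly the clutching description of a line bundle on $\overline{\mathbb D}/\partial\overline{\mathbb D}$.

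Since $\theta\mapsto\gamma_\theta(x_0)$ has degree one, $c_1(F^*\LL_{x_0})$ is $\pm$ the generator. The main obstacle is fixing this sign, i.e. showing it equals $h=c_1(\mathcal O(1))$ rather than $-h$.

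To fix the sign, we compare with the model of Lemma~\ref{c:lem:elementary} for $k=1$. The map $F_\delta:\CP^1\to\B^*(D)$ is induced by $z\mapsto(z_0\cdot1+z_1 s,0)$ with $s$ nonconstant, and it satisfies $F_\delta^*\mu_D=h$. We realize it in disk form. On the affine chart $z_0=1$, with $z_1=\zeta\in\C$, take $\widetilde G(\zeta)=(1+\zeta s,0)$, suitably rescaled. As $|\zeta|\to\infty$, the point $[z_0:z_1]$ tends to $[0:1]$. To put the collapsed point at $*=[(1,0)]$ instead, use the chart centred at $[0:1]$: write $\widetilde G(\zeta)=(\zeta^{-1}+s,0)$, multiplied by the constant gauge $\zeta/|\zeta|$ where needed.

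Then apply the same clutching computation to $\widetilde G$. Its boundary gauge loop is $\theta\mapsto e^{i\theta}$ with the orientation induced from $\CP^1$, and the result must be $h$. This calibrates the sign convention: for the weight-one action, a degree-one loop $\gamma_\theta(x_0)$ with the disk oriented as a complex chart yields $+h$.

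Finally, a homotopy argument reduces the general case to this model. Two disk families whose boundary loops have the same degree at $x_0$ give pullback line bundles with the same clutching degree. By the clutching classification, $c_1$ depends only on that degree, so $F^*\mu_D=h$.
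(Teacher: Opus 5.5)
Your proposal is correct and is essentially the paper's argument. You collapse $\partial\mathbb D$ to $*=[(1,0)]$, trivialize $F^*\LL_{x_0}$ over the disk by the lift $\widetilde F$ and near $*$ by a representative through $(1,0)$, and read off $c_1$ from the degree-one clutching function $\gamma_\theta(x_0)^{\pm1}$. For the collar, use a local slice of the gauge projection as the paper does, or the nonvanishing collar section $[\widetilde F(re^{i\theta}),\gamma_\theta(x_0)]$, which is constant on $\partial\mathbb D$, rather than the vague ``replace $\widetilde F$ near the boundary'' remark.

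The paper only asserts the sign ``with the above sign convention''. You instead calibrate it against the weight-one family of Lemma~\ref{c:lem:elementary}. That check is valid once the model is written cleanly, e.g.\ $\widetilde G(re^{i\theta})=\bigl((\xi+s)/\sqrt{1+|\xi|^2},0\bigr)$ with $\xi=\tfrac{r}{1-r}e^{i\theta}$, whose boundary values are $e^{i\theta}\cdot(1,0)$.
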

\begin{proof}[\bf Proof of Lemma \ref{c:lem:disk}]
Every boundary configuration represents $*=[(1,0)]$. Therefore the
quotient family descends to
$\overline{\mathbb D}/\partial\mathbb D\cong S^2$.
By local triviality of the gauge projection, we can choose a local
representative near $*=[(1,0)]$ whose value at $*$ is $(1,0)$. Its pullback gives a
trivialization near the collapsed boundary point. The original lift
gives a trivialization on the parameter disk. The transition function
of the evaluation line is $\gamma_\theta(x_0)$, and its winding number is one.
With the above sign convention, we obtain $F^*\mu_D=h$.
\end{proof}

Although the quotient family is constant on the parameter boundary, the
representatives may have nonzero winding number . The vortex
family constructed in Section~\ref{c:sec:vortex} satisfies the assumptions of
Lemma~\ref{c:lem:disk}.

We now construct a $\CP^k$-family from a $\CP^1$-family by taking
products of configurations. The parameter space is $\Sym^k(\CP^1)\cong\CP^k$.

\begin{lemma}\label{c:lem:product}
Fix $k\ge1$. Suppose that $F:\CP^1\to\B^*(D)$ is continuous,
$F(z_0)=*$, and $F^*\mu_D=h$. Suppose also that the sections of this family
satisfy $|u|\le b$, where $b\ge1$, and that the maximum energy is at most
$\Lambda$. If every product $\prod_{i=1}^k u_i$ of $k$ sections $u_1,u_2,...,u_k$ in the family is not
identically zero, then there exists a continuous map $F^{(k)}:\CP^k\to\B^*(D)$ satisfying
\begin{equation}\label{eq:projective-trial-bound}
 (F^{(k)})^*\mu_D=h,\qquad
 \max_{\CP^k}\En_{\eps,g}^D\circ F^{(k)}
 \le k^2b^{4(k-1)}\Lambda.
\end{equation}
Consequently, $w_{\eps,k}(D,g)\le k^2b^{4(k-1)}\Lambda$.
\end{lemma}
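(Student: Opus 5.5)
The plan is to define $F^{(k)}$ on $\Sym^k(\CP^1)\cong\CP^k$ by multiplying sections and adding connections. First lift the family locally: since the gauge projection $\Conf^*(D)\to\B^*(D)$ is locally trivial, cover $\CP^1$ by open sets on which $F$ has continuous representatives $(u,A)$. Two representatives differ by a gauge transformation. For a $k$-tuple $(z_1,\dots,z_k)$ with representatives $(u_i,A_i)$, set
\[
 F^{(k)}[z_1,\dots,z_k]=\Bigl[\Bigl(\prod_{i=1}^k u_i,\ \sum_{i=1}^k A_i\Bigr)\Bigr].
\]
This is the tensor product of the configurations. If $(u_i,A_i)$ is replaced by $\gamma_i\cdot(u_i,A_i)$, then the product changes by the gauge transformation $\prod_i\gamma_i$, so the class in $\B^*(D)$ is well defined. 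It is also symmetric in the factors, hence descends from $(\CP^1)^k$ to $\Sym^k(\CP^1)$. The hypothesis that $\prod_i u_i\not\equiv0$ places the image in $\B^*(D)$. Continuity holds because multiplication is continuous in the Fr\'echet topology and local lifts exist.

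For the cohomology, note that the evaluation line of the tensor product at $x$ is $\bigotimes_i\LL_x$. Hence the pullback of $\LL_x$ to $(\CP^1)^k$ under the composite $(\CP^1)^k\to\Sym^k\CP^1\to\B^*(D)$ is $\bigotimes_i\mathrm{pr}_i^*F^*\LL_x$, whose first Chern class is $\sum_i\mathrm{pr}_i^*h$. Let $q:(\CP^1)^k\to\CP^k$ be the quotient map. It is the map sending $k$ roots to the monic polynomial, i.e.\ multiplication of linear forms, so $q^*\mathcal O(1)=\mathcal O(1,\dots,1)$. Since $q^*$ is injective on $H^2$ (it has degree $k!$), we get $(F^{(k)})^*\mu_D=h$. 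A cleaner route is to restrict to the line $\{[z,z_0,\dots,z_0]\}\cong\CP^1$. This line generates $H_2(\CP^k)$, and on it $F^{(k)}$ is $F$ tensored with the constant $*^{\otimes(k-1)}$. Tensoring with $(1,0)$ does nothing, so the restriction is exactly $F$ and the pairing is $1$. I will use this argument, since $F(z_0)=*=[(1,0)]$.

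For the energy bound, write $u=\prod u_i$ and $A=\sum A_i$. There are three terms.
\begin{itemize}
 \item Gradient term: $d_Au=\sum_i(d_{A_i}u_i)\prod_{j\ne i}u_j$. Cauchy--Schwarz gives $|d_Au|^2\le k\,b^{2(k-1)}\sum_i|d_{A_i}u_i|^2$.
 \item Curvature term: $|dA|^2\le k\sum_i|dA_i|^2$.
 \item Potential term: $1-|u|^2=\sum_i(1-|u_i|^2)\prod_{j<i}|u_j|^2$ telescopes, so $|1-|u|^2|\le b^{2(k-1)}\sum_i|1-|u_i|^2|$. Therefore $(1-|u|^2)^2\le k\,b^{4(k-1)}\sum_i(1-|u_i|^2)^2$.
\end{itemize}
Summing and using $b\ge1$ gives $\En(F^{(k)})\le k\,b^{4(k-1)}\sum_i\En(u_i,A_i)\le k^2b^{4(k-1)}\Lambda$. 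Combined with a triangulation of $\CP^k$ and $h^k\ne0$, this yields the bound on $w_{\eps,k}$.

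The main obstacle is continuity and well-definedness on $\Sym^k$. The local lifts differ by gauge transformations, and these must multiply consistently on overlaps, including near the diagonal where factors coincide. I would handle this by working on $(\CP^1)^k$ with the pulled-back local trivializations. The map to $\B^*(D)$ is then continuous and $S_k$-invariant, so it factors continuously through the quotient topology of $\Sym^k(\CP^1)$.
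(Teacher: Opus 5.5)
Your proposal is correct and follows the paper's proof: the same product-of-sections, sum-of-connections construction descending to $\Sym^k(\CP^1)\cong\CP^k$, the same three pointwise Cauchy--Schwarz and telescoping estimates giving $k^2b^{4(k-1)}\Lambda$, and the same restriction to the projective line $z\mapsto z+(k-1)z_0$ to identify the pullback class with $h$. Your checks that gauge changes multiply to $\prod_i\gamma_i$, and that the $S_k$-invariant map on $(\CP^1)^k$ descends continuously through the quotient topology, simply spell out what the paper states in one line.
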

\begin{proof}[\bf Proof of Lemma \ref{c:lem:product}]
Choose representatives $(u_{1},A_1),(u_2,A_2),...,(u_k,A_k)$ of $F(z_i)$ and define
\[
 u=\prod_{i=1}^k u_i,\qquad A=\sum_{i=1}^k A_i.
\]
Then it follows that
\[
 d_Au=\sum_i\left(\prod_{j\ne i}u_j\right)d_{A_i}u_i,
 \qquad dA=\sum_i dA_i,
\]
By the Cauchy--Schwarz inequality, we have
\begin{align}
 |d_Au|^2&\le kb^{2(k-1)}\sum_i|d_{A_i}u_i|^2,\notag\\
 |dA|^2&\le k\sum_i|dA_i|^2,\notag\\
 \left(1-\prod_i|u_i|^2\right)^2
 &\le kb^{4(k-1)}\sum_i(1-|u_i|^2)^2.
 \label{c:eq:product-components}
\end{align}
Note that the third inequality follows from $
 1-\prod_i t_i=\sum_i\left(\prod_{j<i}t_j\right)(1-t_i).
$

After integration, the energy is at most $kb^{4(k-1)}\sum_i\En_{\eps,g}^D(u_i,A_i)$. This gives 
the bound in \eqref{eq:projective-trial-bound}. 
The product family is continuous and symmetric in the factors.
Therefore it descends to $\Sym^k(\CP^1)\cong\CP^k$. The map $z\mapsto z+(k-1)z_0$ parametrizes a
projective line. Therefore, restriction to a
projective line identifies the positive generators of second
cohomology. Hence $(F^{(k)})^*\mu_D=h$. The $k$th power is $h^k\ne0$. After
choosing a finite triangulation of $\CP^k$, we can regard $F^{(k)}$ as
a $k$-sweepout. This proves the min--max bound.
\end{proof}

In the foliation construction below, the sections have modulus at most
one, and their zero sets are nowhere dense. Thus we can apply the lemma
with $b=1$ and obtain $k^2\Lambda$.

\section{The cylinder upper bound}\label{c:sec:cylinder}
In this section, we prove the cylinder upper bound by using Gromov's
family of $J$-holomorphic spheres together with Taubes's degree-one
planar vortex. Gromov's theorem gives a smooth family of spheres whose
areas, for the compatible metric, are determined by their homology
class. We place a vortex in the two normal directions to each sphere.
Since the normalized energy of the planar vortex is one, the
AH energy approaches the area of the sphere. We also check that the
family has the required nonzero evaluation Chern class. The
symmetric-product construction of Section~\ref{c:sec:products} then gives
the higher sweepouts.

More precisely, for every connected smooth domain $D\Subset Z(a)$,
every  $\omega_0$-compatible structure $J$ near $\overline D$,
and every fixed $k\ge1$, we will prove
\[
 \limsup_{\eps\downarrow0}w_{\eps,k}(D,g_J)\le k^2a.
\]
We first state the form of Gromov's  result that we use in
this argument, and explain its relation to the original statement. We then
recall the planar vortex and construct the AH family.

\subsection{Gromov's family of \texorpdfstring{$J$}{J}-holomorphic spheres}\label{sec:gromov-ruling}
We use the following form of Gromov's  result. We state it
for a product of two spheres and for a compatible almost complex
structure, since this is the setting needed for the cylinder bound.

\begin{theorem}[{cf.\ \cite[Theorem~0.2.A and Section~2.4.B$_2$]{Gromov1985}}]\label{c:thm:gromov-ruling}
Let $S_1$ and $S_2$ be oriented surfaces diffeomorphic to $S^2$, with
area forms $\omega_1$ and $\omega_2$. Put
\[
 A_i=\int_{S_i}\omega_i>0,\qquad
 M_0=S_1\times S_2,\qquad
 \Omega=\operatorname{pr}_1^*\omega_1+
        \operatorname{pr}_2^*\omega_2,
\]
and suppose that $A_2=NA_1$ for an integer $N\ge1$. For every smooth
$\Omega$-compatible almost complex structure $J$, there is a smooth
fibration
\[
 \pi_J:M_0\longrightarrow S^2
\]
whose fibers are embedded $J$-holomorphic spheres in the class
\[
 F=[S_1\times\{\mathrm{pt}\}]\in H_2(M_0;\Z).
\]
Through each point of $M_0$, there is exactly one such sphere up to  reparametrization.
For the compatible metric $g_J=\Omega(\cdot,J\cdot)$, every fiber $\Sigma$
satisfies
\[
 \Area_{g_J}(\Sigma)=\int_\Sigma\Omega=A_1.
\]
\end{theorem}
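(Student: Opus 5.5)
This statement is Gromov's theorem on $J$-holomorphic spheres in a product of spheres. I will deduce it from the standard theory of pseudoholomorphic curves in the class $F=[S_1\times\{\mathrm{pt}\}]$, in three steps: area, existence through every point, and the fibration structure.

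\textbf{Area and simplicity.} For any $J$-holomorphic map $v:S^2\to M_0$ representing $F$, compatibility of $J$ with $\Omega$ gives
\[
 \Area_{g_J}(v)=\int_{S^2}v^*\Omega=\langle[\Omega],F\rangle=A_1 .
\]
This settles the area statement once the fibers are known to be $J$-holomorphic spheres in $F$.

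The class $F$ has the minimal positive symplectic area among classes that can carry $J$-spheres. Write a class as $pF+q[\{\mathrm{pt}\}\times S_2]$. A nonconstant $J$-sphere has positive area $pA_1+qNA_1$. Because $A_2=NA_1$ with $N\ge1$, every such area is a positive integer multiple of $A_1$. Hence no bubbling can occur in $F$: the pieces of a bubble tree would each have area at least $A_1$, while their areas must sum to $A_1$. It follows that the moduli space of unparametrized $J$-spheres in $F$ is compact. The same argument shows every curve in $F$ is simple.

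\textbf{Existence and regularity.} We have $c_1(F)=2$, so the index of the unparametrized moduli space with one marked point is $2c_1(F)-2+2=4=\dim M_0$. For the product complex structure $J_0$, the curves in $F$ are exactly the fibers $S_1\times\{\mathrm{pt}\}$, and the evaluation map has degree one.

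Next I join $J$ to $J_0$ by a path of $\Omega$-compatible structures, which is possible since the space of compatible structures is contractible. By the compactness above, the cobordism argument shows the evaluation degree stays one along the path. Therefore every point of $M_0$ lies on some $J$-sphere in $F$.

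For embedded curves in dimension four, automatic regularity applies: the normal bundle has degree $F\cdot F=0\ge-1$, so every such curve is Fredholm regular for every $J$. No genericity is needed, so the conclusion holds for arbitrary $J$.

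\textbf{Foliation.} Each curve $C$ in $F$ is embedded by the adjunction formula, since
\[
 g=\tfrac12\bigl(F\cdot F-c_1(F)\bigr)+1=0 .
\]
Two distinct curves in $F$ are disjoint, because positivity of intersections forces any intersection to contribute at least $1>F\cdot F=0$. Thus through each point there is exactly one curve.

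The moduli space $\mathcal M$ is a compact smooth surface. The evaluation map from the universal curve is a bijective local diffeomorphism. It is injective by the disjointness above, and it is a local diffeomorphism because the deformations of a fiber are sections of its trivial normal bundle, whose kernel has dimension two and evaluates isomorphically at each point. Hence $M_0\to\mathcal M$ is a smooth sphere fibration, and the long exact sequence of homotopy groups forces $\mathcal M\cong S^2$.

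\textbf{Main obstacle.} The delicate step is compactness together with invariance of the degree along an arbitrary path of almost complex structures. This is where the integrality condition $A_2=NA_1$ is used, to rule out bubbling. The rest follows from positivity of intersections and automatic transversality.
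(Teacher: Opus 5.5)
Your proposal is correct and follows the same route as the paper, which cites Gromov and explains why his argument applies. Area quantization from $A_2=NA_1$ rules out bubbling and multiple covers of the primitive class $F$. The condition $F\cdot F=0$ gives normal Chern number zero and hence the fibration, and simple connectivity of $M_0$ and of the fibers makes the base $S^2$. You fill in Gromov's existence step with the deformation from $J_0$ and the evaluation-degree argument, together with automatic regularity, adjunction, and positivity of intersections.

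The one point you leave implicit is why the two-dimensional space of normal deformations evaluates isomorphically at every point of a fiber. This holds because a nonzero kernel element of a real Cauchy--Riemann operator on a degree-zero line bundle over $S^2$ has no zeros: its zeros are isolated and count positively toward the degree.
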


We explain the relation with the original statement. In the first
assertion of \cite[Theorem~0.2.A, p.~308]{Gromov1985}, Gromov allows
$S_2$ to be any closed connected surface and assumes that $J$ is tamed
by $\Omega$. He proves the existence and uniqueness of an embedded
rational $J$-curve through a  point in the fiber class. Our statement restricts both factors to spheres and
assumes compatibility. It also includes the fibration conclusion from
\cite[Section~2.4.B$_2$, p.~338]{Gromov1985}.

For the fibration conclusion, put $G=[\{\mathrm{pt}\}\times S_2]$.
Then $\Omega(mF+nG)=(m+Nn)A_1$, so every nonconstant $J$-holomorphic
sphere has area at least $A_1$. Since $\Omega(F)=A_1$, the class $F$
cannot split into two nonconstant components. It is primitive and
satisfies $F\cdot F=0$. Thus its embedded representatives have normal
Chern number zero, and the fibration argument in Section~2.4.B$_2$
applies. The base is a closed simply connected surface, because both
the total space and the fibers are simply connected.  

No genericity assumption on $J$ is needed for this statement. 

\Needspace{10\baselineskip}
\subsection{Vortex families along a smooth foliation}\label{sec:planar-input}\label{c:sec:vortex}
We use Taubes's positive degree-one vortex
\cite{Taubes1980,TaubesEquivalence1980}, with the decay statement in
\cite[Theorem~4.5]{ParisePigatiStern2024}. On the Euclidean plane, write
$\nabla_a=d-iA$ and $D_j=\partial_{x_j}-iA_j$. With the positive sign
convention, the vortex equations at scale $\eps>0$ are
\begin{equation}\label{eq:planar-vortex}
 D_1u+iD_2u=0,\qquad i*F_a=*dA=\frac{1-|u|^2}{2\eps^2}.
\end{equation}
At scale one, write the pair as $(v,\alpha)$, where $\alpha$ is a real
one-form and the unitary connection satisfies $\nabla_a=d-i\alpha$.
We use the following properties of this solution:
\begin{enumerate}[label=\textup{(\roman*)}]
\item The gauge class is uniquely determined by its single simple zero.
We place this zero at $0\in\C$.
\item The section satisfies $|v|\le1$ and $v^{-1}(0)=\{0\}$. Its
unnormalized and normalized energies are
\begin{equation}\label{c:eq:planar-input}
 E_1^{\R^2}(v,\alpha)=2\pi,\qquad
 \En_1^{\R^2}(v,\alpha)=1.
\end{equation}
\item The total curvature is
\begin{equation}\label{eq:planar-flux-input}
 \int_{\C}d\alpha=\int_{\C}iF_a=2\pi.
\end{equation}
\item There are $C,c>0$ such that
\begin{equation}\label{c:eq:planar-decay}
 |d_\alpha v|+(1-|v|^2)+|d\alpha|\le Ce^{-c|y|}.
\end{equation}
We can choose the gauge so that $v=\rho e^{i\theta}$ outside the unit
disk. In this gauge, $1-\rho$, $d\rho$, $\alpha-d\theta$, and
$d\alpha$ decay exponentially.
\end{enumerate}
The solution at scale $\eps$ is obtained by pullback under
$s_\eps(y)=y/\eps$. The flux and energy in
\eqref{c:eq:planar-input}--\eqref{eq:planar-flux-input} are unchanged.

We also use a cutoff of this solution. The cutoff calculation is the
one used in the proof of \cite[Proposition 4.4]{ParisePigatiStern2024}.
We record only its conclusion. For every sufficiently large $L$, we
choose a smooth pair $(v_L,\alpha_L)$ which agrees with $(v,\alpha)$
on $|y|\le L$ and satisfies
\begin{equation}\label{c:eq:planar-cutoff}
 |v_L|\le1,\qquad v_L^{-1}(0)=\{0\},\qquad
 (v_L,\alpha_L)=(e^{i\theta},d\theta)\quad (|y|\ge2L).
\end{equation}
Moreover,
\begin{equation}\label{c:eq:tau}
 \tau_L:=E_1^{\R^2}(v_L,\alpha_L)\longrightarrow2\pi.
\end{equation}
These cutoff pairs are configurations, not exact vortex solutions.
Put $v_{\eps,L}=v_L\circ s_\eps$ and
$\alpha_{\eps,L}=s_\eps^*\alpha_L$. Their unnormalized energy densities
satisfy
\begin{equation}\label{c:eq:planar-moments}
 \begin{split}
 e_{\eps,L}(y)&=\eps^{-2}e_{1,L}(y/\eps),\qquad
 \supp e_{\eps,L}\subset\{|y|\le2L\eps\},\\
 \int_{\R^2}e_{\eps,L}\,dy&=\tau_L,\qquad
 \int_{\R^2}|y|e_{\eps,L}(y)\,dy\le2L\tau_L\eps.
 \end{split}
\end{equation}

We now use them to construct a family along the fibers
of a smooth submersion.

\begin{lemma}\label{c:lem:foliation}
Let $p:V\to\C$ be a smooth proper submersion from a Riemannian
four-manifold $(V,g)$. Its fibers $\Sigma_z=p^{-1}(z)$ are closed surfaces. Let
$D\Subset V$ be connected and smooth. Suppose that the following bound
holds on a neighborhood of a closed parameter disk containing
$p(\overline D)$:
\[
 \Area_g(\Sigma_z)\le A_0.
\]
Then there exist based continuous maps $F_\eps:\CP^1\to\B^*(D)$ satisfying
\begin{equation}\label{c:eq:foliation-result}
 F_\eps^*\mu_D=h,\qquad
 \limsup_{\eps\downarrow0}\max_{\CP^1}\En^D_{\eps,g}\circ F_\eps\le A_0.
\end{equation}
Their sections have modulus at most one. Each zero set is the
restriction of one fiber or the empty set. For each fixed $k$,
the symmetric products give based maps from $\CP^k$ whose pullback
class is $h$ and whose maximum energy is at most $k^2A_0+o(1)$.
In particular,
\begin{equation}\label{c:eq:foliation-higher}
 \limsup_{\eps\downarrow0}w_{\eps,k}(D,g)\le k^2A_0.
\end{equation}
\end{lemma}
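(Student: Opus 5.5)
The plan is to place a rescaled cutoff vortex in the normal directions to each fiber, using the submersion $p$ itself as normal coordinates. Fix a closed disk $\overline{\mathbb D}_R\subset\C$ whose neighborhood contains $p(\overline D)$ and on which the area bound holds. For a parameter $w\in\overline{\mathbb D}_R$, define on $V$ the pair
\[
 \widetilde F_\eps(w)=\bigl(v_{\eps,L}\circ(p-w),\ (p-w)^*\alpha_{\eps,L}\bigr)\Big|_{D},
\]
with $L=L(\eps)\to\infty$ slowly (for instance $L\eps\to0$). This is smooth and has modulus at most one. Its zero set is $\Sigma_w\cap\overline D$, or empty. When $|w|$ is large, so that $\operatorname{dist}(w,p(\overline D))\ge 2L\eps$, the pair equals $(e^{i\theta}\circ(p-w),d\theta\circ(p-w))=\gamma_w\cdot(1,0)$ with $\gamma_w=\frac{p-w}{|p-w|}$. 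To fit Lemma~\ref{c:lem:disk}, I would rescale the disk to $\overline{\mathbb D}$ with boundary circle $|w|=R$, where $p(\overline D)\subset\mathbb D_{R'}$ with $R'<R$. For $x_0\in D$ the map $\theta\mapsto\gamma_{Re^{i\theta}}(x_0)=\frac{p(x_0)-Re^{i\theta}}{|\cdot|}$ has degree one: it is homotopic to $-e^{i\theta}$, which has degree one, and the sign convention is absorbed as in Lemma~\ref{c:lem:disk}. Lemma~\ref{c:lem:disk} then gives $F_\eps:\CP^1\to\B^*(D)$ with $F_\eps^*\mu_D=h$, based at $*$.

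The energy estimate is the main step. Near each fiber $\Sigma_z$, $z\in$ a neighborhood of the disk, I would use the coarea formula for $p$:
\[
 \int_D f\,dV_g=\int_{\C}\Bigl(\int_{\Sigma_z\cap D}\frac{f}{J_p}\,d\sigma\Bigr)dz,
\]
where $J_p$ is the normal Jacobian. Let $g^\perp$ denote the restriction of $g$ to the normal bundle, compared via $dp$ with the Euclidean metric on $\C$. The energy density of the pulled-back pair is then computed pointwise. The terms $|d_A u|^2$ and $|dA|^2$ involve only normal derivatives, and they equal the Euclidean density $e_{\eps,L}(p-w)$ up to a factor controlled by how far $dp$ is from a conformal isometry times $J_p$. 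In general $p$ is not a Riemannian submersion, so this is the obstacle.

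I would handle it by evaluating the density at a point where $e_{\eps,L}$ is concentrated, within distance $2L\eps$ of $\Sigma_w$. The key pointwise inequality to aim for is
\[
 |d_Au|^2+\eps^2|dA|^2+\tfrac{(1-|u|^2)^2}{4\eps^2}\le J_p\,\Bigl(|\mathcal D u|^2+\eps^2|\,{*}dA|^2+\tfrac{(1-|u|^2)^2}{4\eps^2}\Bigr)\circ(p-w)\cdot\frac{\Lambda(x)}{J_p},
\]
where $\Lambda(x)$ involves the eigenvalues $\lambda_1,\lambda_2$ of $dp\,dp^*$. The vortex density splits into a holomorphic part plus a part that is conformally invariant in dimension two. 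A non-conformal linear map distorts the density by at most $\frac{\lambda_1+\lambda_2}{2\sqrt{\lambda_1\lambda_2}}\ge1$, which is not $1$.

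So instead I would precompose with a fiberwise linear map. On each small ball of parameters, choose a linear map $T_z$ of $\C$ so that $T_z\circ dp$ is an isometry on the normal space along $\Sigma_z$ at one point. This does not work uniformly along the fiber either. The cleanest route is to replace $v_{\eps,L}$ by an elliptic vortex adapted to the varying normal metric. Since the energy is concentrated within $O(L\eps)$ of $\Sigma_w$, only the first-order normal geometry matters. For a constant-coefficient quadratic form on $\R^2$, the same vortex pulled back by a square root of the form achieves energy exactly $\tau_L$ times the area scaling. Concretely, I would define the section via $\Phi_w$, a tubular normal-exponential coordinate map from a neighborhood of $\Sigma_w$ to $\Sigma_w\times\R^2$ that is isometric on the normal fibers. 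These coordinates depend continuously on $w$. Away from the tube, I would interpolate to $\gamma_w\cdot(1,0)$, which is possible because the cutoff pair is exactly $(e^{i\theta},d\theta)$ there and the winding is one.

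With normal exponential coordinates, the metric is $g=g_{\Sigma}+|y|^2$ plus $O(|y|)$ terms. Using \eqref{c:eq:planar-moments}, the energy is
\[
 \frac1{2\pi}\int_{\Sigma_w\cap\overline D}\int e_{\eps,L}(y)\,(1+O(|y|))\,dy\,d\sigma\le\frac{\tau_L}{2\pi}\bigl(1+O(L\eps)\bigr)A_0,
\]
and this tends to $A_0$. The error factor is uniform by compactness.

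Continuity of the family in $w$, and the matching at the boundary circle, would be checked after gluing the tubular construction to the gauge $\gamma_w$ outside the tube.

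For the higher statement, I would apply Lemma~\ref{c:lem:product} with $b=1$. Every product of $k$ sections is nonzero because each zero set is contained in a fiber, hence nowhere dense. This gives $\CP^k$-families with class $h$ and energy at most $k^2(A_0+o(1))$, and therefore \eqref{c:eq:foliation-higher}.
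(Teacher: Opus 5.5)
Your argument is correct in outline, but it takes a different route from the paper.

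\textbf{The paper's route.} The paper never passes to a tubular chart. It replaces $p-z$ by the single global map $w_z=B(p-z)$, where $B=(dp\,dp^*)^{-1/2}$.
\begin{itemize}
\item Along $\Sigma_z$ the term containing $dB$ is multiplied by $p-z=0$. So $dw_z$ is a coisometry exactly on the zero fiber, and $dw_z\,dw_z^*=\Id+O(|w_z|)$ nearby.
\item The cutoff vortex pulled back by $w_z$ is therefore defined on all of $V$. Off its support it is automatically the pure gauge with phase $w_z/|w_z|$.
\item Its energy is bounded by the coarea formula for $w_z$ together with $\Area_g(w_z^{-1}(y))=\Area_g(\Sigma_z)+O(|y|)$.
\item The boundary degree comes from the homotopy $B_s=(1-s)B+s\Id$.
\end{itemize}
So the ``square root of the form'' you mention, applied with an $x$-dependent form, is already the whole construction. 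You did not need to abandon it: the variation of the form costs only $O(|y|)$.

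\textbf{What your route buys and costs.} Your normal-exponential chart gives a very transparent energy computation: Fubini on $\Sigma_w\times\R^2$ with $g=g_\Sigma\oplus dy^2+O(|y|)$ and error $O(L\eps)$. (Integrate over all of $\Sigma_w$, not over $\Sigma_w\cap\overline D$, to get an upper bound.) The price is a uniform normal injectivity radius, a continuous choice of orthonormal normal frames, and a gluing step.

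\textbf{The gluing step.} This is where your sketch is too loose, and ``the winding is one'' is not the right justification.
\begin{itemize}
\item The interpolation to the exterior gauge should stay inside pure gauges, which have zero energy. Concretely, on the annulus where $y/|y|=e^{i\phi}\,(p-w)/|p-w|$, set the pair equal to $(e^{i\chi\phi}\gamma_w)\cdot(1,0)$ with a radial cutoff $\chi$. Letting $|u|$ drop below one there would cost potential energy that is not small as $\eps\downarrow0$.
\item This requires a real logarithm $\phi$ of the phase ratio on $\Sigma_w\times(\text{annulus})$. That is a condition on the ratio's whole class in $H^1(\Sigma_w\times S^1;\Z)$, not only on the winding around the fiber.
\item For a fiber of positive genus and an arbitrary orthonormal normal frame, the ratio can carry a nonzero class from $H^1(\Sigma_w;\Z)$, and then no such logarithm exists.
\end{itemize}

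\textbf{The fix.} Take the frame induced by $dp$, namely $n_j=dp^*Be_j$, which is exactly the paper's normalization.
\begin{itemize}
\item In your coordinates $p-w=G^{1/2}y+O(|y|^2)$ with $G=dp\,dp^*$. Since $G^{1/2}$ is positive definite, the two phases differ by an angle less than $\pi/2$ near the fiber. So $\phi$ exists and depends smoothly on $w$.
\item If you also do the gluing at radius comparable to $L\eps$, the boundary configurations on $\overline D$ are exactly $\gamma_w\cdot(1,0)$ for small $\eps$. Your degree computation then applies unchanged, as does the rest of your argument, including the symmetric-product step.
\end{itemize}
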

\begin{proof}[\bf Proof of Lemma \ref{c:lem:foliation}]
We choose $R_0>\sup_{\overline D}|p|$ and $\rho_0>0$ such that the fiber-area bound holds
for $|z|\le R_0+\rho_0$. Let
\[
K=p^{-1}(\{\zeta\in\C:|\zeta|\le R_0+\rho_0\}).
\]
We identify each tangent space of $\C$ with $\R^2$ naturally,
and write $\langle\cdot,\cdot\rangle_0$ for its Euclidean inner product.
For $x\in V$, the differential and its pointwise adjoint are real-linear maps
\[
 dp_x:T_xV\longrightarrow\R^2,\qquad
 dp_x^*:\R^2\longrightarrow T_xV.
\]
The adjoint is defined by
\[
 g_x(v,dp_x^*\eta)=\langle dp_xv,\eta\rangle_0
 \qquad(v\in T_xV,\ \eta\in\R^2).
\]
We now put
\begin{equation}\label{c:eq:transverse-normalization}
 G(x)=dp_x\circ dp_x^*,\qquad B(x)=G(x)^{-1/2},\qquad
 w_z(x)=B(x)(p(x)-z),\quad |z|\le R_0.
\end{equation}
Thus $G(x)\in\operatorname{End}_{\R}(\R^2)$ is a real $2\times2$
symmetric positive-definite matrix. Indeed, $dp_x$ is surjective, and
\[
 \langle G(x)\eta,\eta\rangle_0=|dp_x^*\eta|_g^2>0
 \qquad(\eta\ne0).
\]
If $p=p_1+ip_2$, then its matrix entries are
$G_{ij}(x)=g_x(\nabla_g p_i,\nabla_g p_j)$, for $i,j\in\{1,2\}$.
The inverse square root $B(x)\in\operatorname{End}_{\R}(\R^2)$ is also
symmetric positive definite and depends smoothly on $x$. 

For each fixed $z$, we use the same convention for $w_z:V\to\R^2$:
\[
 (dw_z)_x:T_xV\longrightarrow\R^2,\qquad
 (dw_z)_x^*:\R^2\longrightarrow T_xV,
\]
where
\[
 g_x(v,(dw_z)_x^*\eta)=\langle(dw_z)_xv,\eta\rangle_0
 \qquad(v\in T_xV,\ \eta\in\R^2).
\]
 We write $dw_z^*$ for the family
of maps $(dw_z)_x^*$.

On $\Sigma_z$, the term containing the derivative of $B$ vanishes.
Since $B(x)^*=B(x)$, we obtain, for $x\in\Sigma_z$,
\begin{equation}\label{c:eq:coisometry}
 (dw_z)_x=B(x)dp_x,\qquad
 (dw_z)_x(dw_z)_x^*=B(x)G(x)B(x)=\Id_{\R^2}.
\end{equation}
Here $\ker(dw_z)_x=T_x\Sigma_z$. Hence the restriction of $(dw_z)_x$
to the normal plane $(T_x\Sigma_z)^{\perp_g}$ is an isometry onto
$\R^2$. 

At an arbitrary point, differentiation gives
\[
 (dw_z)_x(v)=B(x)dp_x(v)+(dB)_x(v)(p(x)-z)
 \qquad(v\in T_xV).
\]
Define the real-linear endomorphism
$H_z(x)=(dw_z)_x\circ(dw_z)_x^*\in\operatorname{End}_{\R}(\R^2)$.
Since $p-z=B^{-1}w_z$ and the fixed coefficients are bounded on $K$,
\begin{equation}\label{c:eq:H-near}
 H_z(x)=\Id_{\R^2}+O(|w_z(x)|)
\end{equation}
uniformly for $|z|\le R_0$ near the zero fibers. The error is measured
in the Euclidean operator norm.

The set $\{(x,z):p(x)=z,\ |z|\le R_0\}$ is compact. We apply the implicit function theorem in
the normal directions, with parameter $z$. For sufficiently small $y$, the set $K\cap w_z^{-1}(y)$ is a normal graph over $\Sigma_z$. The graph and its first derivative are $O(|y|)$, uniformly for $|z|\le R_0$. Thus
\begin{equation}\label{c:eq:level-area}
 \Area_g(K \cap w_z^{-1}(y))=\Area_g(\Sigma_z)+O(|y|).
\end{equation}
For small $y$, these graphs contain the whole level sets in
$K$. 

Fix $L$. From this point on, $w_z^*$ denotes pullback of differential
forms by the map $w_z$. We pull back the planar pair and define
\begin{equation}\label{c:eq:foliation-pair}
 u_{\eps,z}=v_{\eps,L}\circ w_z,\qquad
 A_{\eps,z}=w_z^*\alpha_{\eps,L}.
\end{equation}
Covariant differentiation and exterior differentiation commute with
this pullback. Therefore
\[
 d_{A_{\eps,z}}u_{\eps,z}=w_z^*(d_{\alpha_{\eps,L}}v_{\eps,L}),
 \qquad dA_{\eps,z}=w_z^*(d\alpha_{\eps,L}).
\]
On the set where the energy density is nonzero, we have $|w_z|\le2L\eps$. This
set lies in the common tube for sufficiently small $\eps$.

Let $\xi$ be a real one-form on $\R^2$, and let $\xi^\sharp$ be its
Euclidean metric dual. For $y=w_z(x)$, we have
\[
 |(w_z^*\xi)_x|_g^2
 =\langle H_z(x)\xi_y^\sharp,\xi_y^\sharp\rangle_0.
\]
For a complex-valued one-form, the corresponding identity follows by
applying this formula to its real and imaginary parts. For a planar
two-form $\zeta$, we have
\[
 |(w_z^*\zeta)_x|_g^2=(\det H_z(x))|\zeta_y|_0^2.
\]
The coarea Jacobian at $x$ is $\sqrt{\det H_z(x)}$. By \eqref{c:eq:H-near},
the metric corrections to these norm and Jacobian factors are
$1+O(|y|)$. Combining this with \eqref{c:eq:planar-moments} and
\eqref{c:eq:level-area}, we obtain
\begin{align}
 E^K_{\eps,g}(u_{\eps,z},A_{\eps,z})
 &\le\int_{\R^2}e_{\eps,L}(y)
       \bigl(\Area_g(\Sigma_z)+C|y|\bigr)\,dy\notag\\
 &\le \tau_L\Area_g(\Sigma_z)+C_L\eps.\label{c:eq:coarea-bound}
\end{align}
Here we use the coarea formula for a smooth submersion; see \cite[Chapter~2, Section~6]{Simon}.
Restriction from $K$ to $D$ does not increase the energy.
We first proved the estimate for the closed fibers.

Next, we calculate the pullback of $\mu_D$. If $z=R_0e^{i\theta}$, then
$|p(x)-z|$ has a positive lower bound for $x\in\overline D$. Since $B$ is
uniformly invertible, $|w_z|$ also has a positive lower bound there.
For sufficiently small $\eps$, the restricted pair is exactly a pure
gauge with phase
\begin{equation}\label{c:eq:foliation-phase}
 q_\theta(x)=\frac{B(x)(p(x)-R_0e^{i\theta})}
 {|B(x)(p(x)-R_0e^{i\theta})|}.
\end{equation}
Its gauge class is the vacuum. Hence we can collapse the parameter
boundary and obtain a based sphere family.

For $0\le s\le1$, let $B_s=(1-s)B+s\Id$. We consider the homotopy
\[
 \frac{B_s(x)((1-s)p(x)-R_0e^{i\theta})}
 {|B_s(x)((1-s)p(x)-R_0e^{i\theta})|}
\]
It is defined everywhere, since $B_s$ is positive definite and
$|(1-s)p(x)|<R_0$. At the endpoint, it is $-e^{i\theta}$. Therefore the evaluation
loop at every $x_0\in D$ has degree one. By the clutching calculation in
Lemma~\ref{c:lem:disk}, we have $F_{\eps,L}^*\mu_D=h$.

The zero set of \eqref{c:eq:foliation-pair} is $D\cap\Sigma_z$. Thus the section is not
identically zero. All operations are continuous in $z$ in the
smooth topology. For fixed $L$, we have
\[
 \max\En^D_{\eps,g}\circ F_{\eps,L}
 \le\frac{\tau_L}{2\pi}A_0+C_L\eps.
\]
We first let $\eps\downarrow0$ with $L$ fixed, and then let $L\to\infty$.
We can choose $L=L(\eps)\to\infty$ sufficiently slowly and write $F_\eps$ for the
 family. This proves \eqref{c:eq:foliation-result}. We do not need the estimates
to be uniform in $L$.

For \eqref{c:eq:foliation-higher}, we apply the symmetric-product construction in
Lemma~\ref{c:lem:product}. Since $|u_i|\le1$, the product energy is at most
$k\sum_i\En_{\eps,g}(u_i,A_i)$. Each zero set is closed and has empty interior. The finite
union of these zero sets cannot contain $D$. Hence the product
section is not identically zero. By restricting to the projective line
$z+(k-1)z_0$, we see that the pullback class is $h$. Therefore the
upper limit of the maximum energy is at most $k^2A_0$.
\end{proof}

\subsection{Proof of the cylinder upper bound}\label{sec:cylinder-proof}
\begin{lemma}\label{c:prop:cylinder}
For every smooth $D\Subset Z(a)$, every compatible $J$ near $\overline D$,
and every fixed $k\ge1$, we have
\begin{equation}\label{c:eq:cylinder-bound}
 \limsup_{\eps\downarrow0}w_{\eps,k}(D,g_J)\le k^2a.
\end{equation}
Consequently, $\cAH_k(Z(a))\le k^2a$.
\end{lemma}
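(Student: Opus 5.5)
The plan is to realize $D\Subset Z(a)$ with the compatible structure $J$ inside a product of spheres to which Theorem~\ref{c:thm:gromov-ruling} applies, and then feed the resulting fibration into Lemma~\ref{c:lem:foliation}.

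\textbf{Step 1: the ambient product.} Since $\overline D$ is compact, it lies in $B^2(a)\times B^2(R)$ for some large $R$. Because $\overline D$ projects into the open disk $B^2(a)$ and $\overline D$ is compact, we can fix $a'<a$ with $\overline D\subset B^2(a')\times B^2(R)$. For small $\delta>0$, there is an area-preserving embedding of $B^2(a')$ into a sphere $S_1$ of area $A_1\in(a',a+\delta)$. Similarly, $B^2(R)$ embeds into a sphere $S_2$ of area $A_2=NA_1$, once the integer $N$ is large. The product of these embeddings is a symplectic embedding of a neighborhood of $\overline D$ into $(M_0,\Omega)$. Through it we regard $D\Subset M_0$ and $J$ as an almost complex structure defined near $\overline D$.

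\textbf{Step 2: extending $J$.} We extend $J$ to a global $\Omega$-compatible almost complex structure $\widetilde J$ on $M_0$ agreeing with $J$ on a neighborhood of $\overline D$. This is possible because the space of compatible structures at each point is contractible; concretely, one uses the fiberwise retraction onto compatible structures (via the polar-decomposition/Cayley-type map) together with a cutoff. Then $g_{\widetilde J}=g_J$ near $\overline D$, so the energies on $D$ are unchanged.

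\textbf{Step 3: the fibration and the submersion.} Theorem~\ref{c:thm:gromov-ruling} gives a smooth fibration $\pi:M_0\to S^2$ by $\widetilde J$-spheres $\Sigma$ with $\Area_{g_{\widetilde J}}(\Sigma)=A_1$. To reach the setting of Lemma~\ref{c:lem:foliation}, we need a submersion to $\C$. Choose a point $q\in S^2$ whose fiber $\Sigma_q$ misses $\overline D$. Such a point exists because $\pi(\overline D)$ is a proper compact subset of $S^2$; indeed a fiber in class $F$ cannot lie entirely in the image of $B^2(a')\times B^2(R)$, since that region deformation retracts and carries no class of positive area. Identify $S^2\setminus\{q\}\cong\C$ and set $V=M_0\setminus\Sigma_q$ and $p=\pi|_V$. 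The map $p$ is a proper submersion onto $\C$ with closed fibers, $D\Subset V$, and every fiber has area exactly $A_1$. So the hypothesis of Lemma~\ref{c:lem:foliation} holds with $A_0=A_1$ on any disk.

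\textbf{Step 4: conclusion.} Applying \eqref{c:eq:foliation-higher} gives
\[
\limsup_{\eps\downarrow 0} w_{\eps,k}(D,g_J)\le k^2A_1\le k^2(a+\delta).
\]
Letting $\delta\to0$ yields \eqref{c:eq:cylinder-bound}. Since $W_k\le\limsup w_{\eps,k}$, taking suprema over $D$ and $J$ in \eqref{eq:family-capacity} gives $\cAH_k(Z(a))\le k^2a$.

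\textbf{Main obstacle.} The delicate point is Step 3: making sure the compact fibration restricts to a submersion over $\C$ that contains $\overline D$. The rest is bookkeeping: the extension of $J$, and checking that $N$ can be taken integral by slightly enlarging $S_2$. If the argument that some fiber avoids $\overline D$ needs strengthening, I would instead use the sphere $S_2$ with a point of $B^2(R)$'s complement. Concretely, the fiber through a point of $S_1\times(S_2\setminus B^2(R))$ lies in class $F$ and must meet each slice $\{x\}\times S_2$ with intersection number one. Positivity of intersections, applied with the compatible structure, then forces it to exit the chart. From this it follows that its $\pi$-image is disjoint from $\pi(\overline D)$, possibly after shrinking the chart.
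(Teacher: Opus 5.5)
\textbf{Gap in Step 3.} Your Steps 1, 2 and 4 agree with the paper. Step 3, however, does not establish that some fiber is disjoint from $\overline D$, and this is the crux.

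Knowing that no fiber in class $F$ lies inside the chart only shows that every fiber \emph{leaves} the chart. It does not show that some fiber \emph{misses} $\overline D$, and that is what $\pi(\overline D)\ne S^2$ means. Your fallback fails for two reasons. First, positivity of intersections needs both curves to be $\widetilde J$-holomorphic, and the slices $\{x\}\times S_2$ are not. Second, a fiber through a point outside the chart can re-enter the chart and meet $\overline D$, so nothing places its image point outside $\pi(\overline D)$.

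In fact the claim is false for an arbitrary extension. Suppose $\widetilde J=\psi^*J_{\mathrm{prod}}$ for a Hamiltonian diffeomorphism $\psi$ of $M_0$. Then the fibers are $\psi^{-1}(S_1\times\{y\})$, so every fiber meets $\overline D$ exactly when $\operatorname{pr}_2(\psi(\overline D))=S_2$. This can be arranged as follows.
\begin{itemize}
\item Every $D$ contains a product $\Delta_1\times\Delta_2$ of small closed disks in chart coordinates.
\item Let $h$ be a Hamiltonian diffeomorphism of $S_2$ carrying $\Delta_2$ onto a thin neighborhood of a meridian arc joining the poles.
\item Let $\Phi$ be the time-one map of $H=\rho(z_1)K(z_2)$. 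Here $K$ generates the period-one rotation about the polar axis, and $\rho$ satisfies $\rho(\Delta_1)=[0,1]$.
\item Put $\psi=\Phi\circ(\mathrm{id}\times h)$.
\end{itemize}
Since $\rho$ and $K$ are conserved along the flow, $\Phi(z_1,z_2)=(\phi^\rho_{K(z_2)}(z_1),\phi^K_{\rho(z_1)}(z_2))$. Hence the second projection of $\psi(\Delta_1\times\Delta_2)$ is the union of all rotations of the arc neighborhood, which is all of $S_2$. The restriction of this $\widetilde J$ to a neighborhood of $\overline D$ is an admissible $J$ in the lemma, and $\widetilde J$ itself is an allowed extension in your Step 2. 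So Step 3 cannot be proved without further care.

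\textbf{The missing idea.} You must choose the extension so that it has a built-in fiber far from $\overline D$. The paper fixes a point $\infty\in S_2$ outside the closure of the image of $B^2(R)$. It then takes $\widetilde J$ equal to $J$ near $\overline D$ and equal to the product structure near $\Sigma_\infty=S_1\times\{\infty\}$. These are disjoint compact sets, so such a compatible extension exists. Then $\Sigma_\infty$ is an embedded $\widetilde J$-holomorphic sphere in class $F$, and the uniqueness clause of Theorem~\ref{c:thm:gromov-ruling} forces it to be a fiber of $\pi$. It misses $\overline D$, so $p=\pi|_{M_0\setminus\Sigma_\infty}$ is the required proper submersion onto $\C$, with all fibers of area $A_1$. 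Your Step 4 then applies unchanged.
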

\begin{proof}[\bf Proof of Lemma \ref{c:prop:cylinder}]
Fix $D,J,k$, and choose $a_0>a$. We take area forms $\omega_1$ and
$\omega_2$ on $S^2$ with total areas $a_0$ and $Na_0$, respectively,
where the integer $N\ge2$ is chosen sufficiently large. A product of
symplectic disk charts in
\[
 M_0=S^2(a_0)\times S^2(Na_0),\qquad
 \Omega=\operatorname{pr}_1^*\omega_1+
        \operatorname{pr}_2^*\omega_2
\]
then contains a neighborhood of $\overline D$. This is possible because
$D\Subset Z(a)$, so the projection of $\overline D$ to the second
coordinate plane is bounded. Here $S^2(b)$ denotes a sphere with an
area form of total area $b$. We identify $D$ and $J$ with their images
in this symplectic chart.

The sphere $\Sigma_\infty=S^2(a_0)\times\{\infty\}$ does not meet the
chosen chart.  We extend an $\Omega$-compatible structure
$\widetilde J$ on $M_0$ which agrees with the prescribed $J$ near
$\overline D$ and with the product structure near $\Sigma_\infty$.
These two compact subsets are disjoint. Its compatible metric
$g_{\widetilde J}=\Omega(\cdot,\widetilde J\cdot)$ pulls back to the
original $g_J$ near $\overline D$.

The hypotheses of Theorem~\ref{c:thm:gromov-ruling} are now satisfied. By the theorem, $M_0$ has a smooth fibration by
embedded $\widetilde J$-holomorphic spheres in the class
$F=[S^2\times\{\mathrm{pt}\}]$. Since $\Sigma_\infty$ is already such
a sphere, uniqueness implies that it is one of the fibers.

We remove this fiber and identify the remaining base with $\C$. Thus
we obtain a smooth proper submersion
\[
 p:M_0\setminus\Sigma_\infty\longrightarrow\C.
\]
The inverse image of a
compact subset of the remaining base is compact and disjoint from
$\Sigma_\infty$. Each fiber is $\widetilde J$-holomorphic and satisfies
\begin{equation}\label{c:eq:calibrated-fiber}
 \Area_{g_{\widetilde J}}(\Sigma_z)=\int_{\Sigma_z}\Omega=a_0.
\end{equation}

We apply Lemma~\ref{c:lem:foliation} with $A_0=a_0$. The vortex
construction gives projective families on $D$ with the required
pullback class and with
$\limsup_{\eps\downarrow0}w_{\eps,k}(D,g_J)\le k^2a_0$.
We let $a_0\downarrow a$ while keeping $D,J,k$ fixed. This proves
\eqref{c:eq:cylinder-bound}. Since the right side is independent of
$D,J$, we can then take their suprema. 
\end{proof}

\section{The geometric estimate for the ball lower bound}\label{c:sec:double}
We prove a lower bound for the mass of nonzero stationary integral
two-varifolds on smooth approximations to the double of a Euclidean
ball. The estimate is stated in Lemma~\ref{c:prop:double-gap}.
In Section~\ref{c:sec:closed-AH}, we use it to exclude positive AH
critical values below a fixed level on a smooth double and to prove
the ball lower bound.

\subsection{Integral varifolds and compactness}\label{x:gmt-interface}
The limiting varifold in the theorem of Pigati--Stern has dimension
$n-2$; see Section~\ref{sec:intro-background}. We use $n=4$ and
recall the compactness and density results for integral two-varifolds
needed below. These follow from Allard's compactness theorem and the
monotonicity formula \cite{Allard1972}. We use the statements in
\cite[Chapters 4 and 8]{Simon} and \cite[Theorem 3.2]{White}.

Let $(M,g)$ be a smooth Riemannian manifold. An integral two-varifold
is represented by a countably two-rectifiable set $S$ and an integer
multiplicity $\vartheta\ge1$:
\[
 V(\phi)=\int_S\phi(x,T_xS)\vartheta(x)\,d\mathcal H_g^2(x).
\]
We write $\nu=\|V\|_g$ for its weight measure and
$\Mass_g(V)=\nu(M)$ for its mass. For a compactly supported smooth
vector field $X$, the first variation is
\[
 \delta_gV(X)=\int\diver_P^gX\,dV(x,P).
\]
If $\delta_gV(X)=-\int\langle H,X\rangle_g\,d\nu$ for all such
$X$, with $|H|\le\Lambda$ almost everywhere with respect to $\nu$,
then $H$ is the generalized mean curvature of $V$. In this case,
the first variation has no singular part with respect to $\nu$.
The estimate $|\delta_gV(X)|\le\Lambda\int|X|\,d\nu$ for every
such $X$ implies this representation.

\begin{lemma}\label{x:gmt-input}
Fix a smooth ambient Riemannian manifold. The following statements hold.
\begin{enumerate}[label=(\roman*)]
\item Suppose that a sequence of integral varifolds has locally
uniformly bounded mass and total variation of first variation. Then a
subsequence converges weakly to an integral varifold.
\item Suppose that $V_j\rightharpoonup V$ and that $|\delta_gV_j(X)|\le\Lambda\int|X|\,d\|V_j\|_g$ holds for every test
field $X$, with $\Lambda$ independent of $j$. Then the same
inequality holds for $V$. For each fixed $X$, this follows
from weak convergence on the Grassmann bundle.
\item Suppose that an integral varifold has bounded generalized mean
curvature and no singular part of the first variation. Then its
two-dimensional density exists and is upper semicontinuous. The density
is at least one at every point of the support. Indeed, integer
multiplicity gives this bound almost everywhere. Upper semicontinuity
then gives it at every point of the support.
\item At each such support point, a subsequence of the rescalings
converges to a nonzero stationary integral cone. In coordinates for which
the metric at the point is Euclidean, every tangent cone $C$
satisfies
\[
 \|C\|(B_\rho(0))=\pi\theta\rho^2,
 \qquad z\in P\quad C\text{-almost everywhere},
\]
where $\theta$ is the density at that point. The tangent cone need not
be unique or a plane.
\end{enumerate}
\end{lemma}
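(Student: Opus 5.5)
The lemma collects standard facts from geometric measure theory, so the plan is to reduce each item to a cited theorem in \cite{Simon,White,Allard1972}. Throughout I would work in a fixed compact subset of the ambient manifold. There I would either use Nash embedding into a Euclidean space, or work in local coordinates in which the metric is uniformly comparable to the Euclidean one. This lets the Euclidean statements of Allard's theory apply with constants depending only on the metric.

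For (i), I would invoke Allard's integral compactness theorem \cite[Chapter~8]{Simon}. Local mass bounds and local bounds on $\|\delta V_j\|$ give a weakly convergent subsequence, since varifolds are Radon measures on the Grassmann bundle and we may use Banach--Alaoglu with a diagonal argument over an exhaustion. Allard's theorem then says the limit is integral. In the Riemannian setting, the first variation in an isometric embedding differs from the intrinsic one by a term bounded by the second fundamental form of $M$ times the mass. The hypotheses therefore transfer to the embedded setting.

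For (ii), I would fix a compactly supported $X$. The map $(x,P)\mapsto\diver^g_PX(x)$ is continuous and compactly supported on the Grassmann bundle, so $\delta_gV_j(X)\to\delta_gV(X)$. Also $|X|$ is continuous with compact support, so $\int|X|\,d\|V_j\|\to\int|X|\,d\|V\|$. Passing to the limit in the inequality gives the bound for $V$. The Riesz representation remark preceding the lemma then yields bounded mean curvature with no singular part.

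For (iii) and (iv), I would use the monotonicity formula with bounded mean curvature \cite[Chapter~4, \S17]{Simon}, in the Riemannian form in \cite{White}. The key fact is that $e^{C\Lambda\rho}\rho^{-2}\|V\|(B_\rho(x))$ is almost nondecreasing, with the constant $C$ depending on the metric. Monotonicity gives existence of the density $\theta(x)$ everywhere and its upper semicontinuity, by the standard argument with a slightly larger ball at nearby points. Integrality gives $\theta\ge1$ at $\nu$-almost every point, since the density equals the multiplicity almost everywhere for rectifiable varifolds. The support is the closure of such points, so upper semicontinuity gives $\theta\ge1$ on all of $\spt\nu$.

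For the tangent cones, I would rescale by $\eta_{x,r}(y)=(y-x)/r$ in normal coordinates. The rescaled varifolds have mean curvature bounded by $r\Lambda$ and locally bounded mass by monotonicity. By (i), they subconverge as $r\to0$ to an integral varifold $C$ which is stationary for the Euclidean metric. Equality in the limiting monotonicity identity, with density ratio constant and equal to $\theta$, forces $C$ to be a cone. This gives $\|C\|(B_\rho)=\pi\theta\rho^2$ and $y\in P$ for $C$-almost every $(y,P)$. The cone is nonzero because $\theta\ge1$.

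The main technical point is the Riemannian monotonicity and the control of the metric error terms under rescaling. I would handle it by citing \cite[Theorem 3.2]{White} rather than rederiving it.
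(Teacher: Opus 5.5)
Your proposal is correct and takes essentially the same route as the paper. The paper also proves this lemma by citing Allard's integral compactness theorem and the monotonicity formula (through \cite[Chapters 4 and 8]{Simon} and \cite[Theorem 3.2]{White}), passing the first-variation bound to the limit by weak convergence on the Grassmann bundle, deducing $\theta\ge1$ on the support from integer multiplicity almost everywhere plus upper semicontinuity, and obtaining (iv) from rescaling and the equality case of monotonicity in normal coordinates; your remarks on transferring the hypotheses through an isometric embedding or comparable coordinates spell out what the paper covers by ``the corresponding statements follow in normal coordinates.''
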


In (iv), we rescale the varifold by the map $z\mapsto z/r$.
Its two-dimensional Jacobian is $r^{-2}$. The relation $z\in P$
follows from the equality case of the monotonicity formula.
We use these two properties to compute the integral involving the
cutoff function in Section~\ref{sec:singular-flux}. For a smooth
Riemannian metric, the corresponding statements follow in normal
coordinates.

We also use the following consequence of rectifiability. If $E$ is
a smooth hypersurface, then $T_xS\subset T_xE$ at almost every
point of $S\cap E$ with respect to two-dimensional area.
Thus, on the part of the varifold supported on $E$, the first
variation depends only on derivatives in directions tangent to $E$.

We will apply these results with a fixed smooth round metric on $S^4$.
For nonzero varifolds with a uniform bound on mean curvature and
density at least one, the monotonicity formula gives a uniform
positive lower bound for the mass in a ball of fixed small radius
centered at a support point. Since $S^4$ is compact, total mass
converges under weak convergence. Hence a weak limit of such
varifolds is nonzero.

\subsection{The double and Brendle's vector field}\label{x:why-double}
The compactness theorem of Pigati--Stern applies on closed manifolds.
In Section~\ref{c:sec:closed-AH}, we will reflect AH configurations
on $B_R$ to its double. We now choose smooth metrics on this double
and prove the mass estimate needed for that argument.

Let $h_0$ be the standard metric on
\[
 S^4=\{(\xi,t)\in\R^4\times\R:|\xi|^2+t^2=1\},
 \qquad E=\{t=0\}.
\]
Stereographic projection from the opposite pole, followed by
multiplication by $R$, identifies each closed hemisphere with the
closed Euclidean ball $B_R\subset\R^4\times\{0\}\subset\R^5$.
On the upper hemisphere, the map is $x=R\xi/(1+t)$.
The Euclidean metrics on the two copies define the metric
\begin{equation}\label{c:eq:double-metric}
 g_{\mathrm d}=f^2h_0,\qquad f(t)=\frac{R}{1+|t|}.
\end{equation}
This metric is Lipschitz and is smooth away from $E$.
For $\eta>0$, we define
\begin{equation}\label{c:eq:smoothed-double}
 f_\eta(t)=\frac{R}{1+\sqrt{t^2+\eta^2}},\qquad
 h_\eta=f_\eta^2h_0.
\end{equation}
Each $h_\eta$ is smooth and invariant under reflection. We have
$f_\eta\to f$ uniformly and $f_\eta\le f$. For $0<\eta\le1$,
the functions $f_\eta$ are bounded below by a positive constant,
and their first derivatives are bounded independently of $\eta$.

We set
\begin{equation}\label{c:eq:m-eta}
 m_\eta=\inf\{\Mass_{h_\eta}(W):W\ne0\text{ is a stationary integral two-varifold on }S^4\}.
\end{equation}
An empty infimum is $+\infty$.
\begin{lemma}\label{c:prop:double-gap}
For the metrics \eqref{c:eq:smoothed-double}, we have
\begin{equation}\label{c:eq:geometric-gap}
 \liminf_{\eta\downarrow0}m_\eta\ge2\pi R^2.
\end{equation}
\end{lemma}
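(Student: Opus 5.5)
We argue by contradiction and compactness. Suppose there are $\eta_j\downarrow0$ and nonzero stationary integral two-varifolds $W_j$ on $(S^4,h_{\eta_j})$ with $\Mass_{h_{\eta_j}}(W_j)\le 2\pi R^2-\delta$ for a fixed $\delta>0$. We first pass to the fixed smooth round metric $h_0$. Since $h_\eta=f_\eta^2h_0$ is conformal, each $W_j$, viewed as an $h_0$-varifold, has generalized mean curvature $H_0=f_\eta^2H_\eta-2(\nabla^{h_0}\log f_\eta)^{\perp}$. Here $H_\eta=0$, so $|H_0|\le 2|d\log f_\eta|_{h_0}\le C$ uniformly in $\eta\le1$, because $f_\eta$ is bounded below and $|f_\eta'|$ is bounded. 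The $h_0$-masses are also uniformly bounded, since $f_\eta$ is bounded below.

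By Lemma~\ref{x:gmt-input}(i),(ii), a subsequence converges to an integral varifold $W$ on $(S^4,h_0)$ with $|\delta_{h_0}W(X)|\le C\int|X|\,d\|W\|$. By the closing remark of Section~\ref{x:gmt-interface}, $W\ne0$. Since $f_\eta\to f$ uniformly, the masses converge, so $\Mass_{g_{\mathrm d}}(W)\le 2\pi R^2-\delta$. Away from $E$, $h_\eta\to g_{\mathrm d}$ smoothly, so $W$ is $g_{\mathrm d}$-stationary on $S^4\setminus E$. In other words, $W$ restricted to each open hemisphere is a stationary varifold in the Euclidean ball $B_R$.

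The key step is to derive, across $E$, a weak stationarity condition for the Lipschitz metric $g_{\mathrm d}$. We do this by testing the first variation of $W_j$ with respect to $h_{\eta_j}$ against reflection-compatible vector fields. For fields $X$ tangent to $E$ along $E$, the terms involving $f_\eta'(t)$ are $O(|t|/\sqrt{t^2+\eta^2})$ times $X^t$. Since $X^t=O(|t|)$, these terms converge, and we obtain $\delta_{g_{\mathrm d}}W(X)=0$ for such $X$. The part of $W$ lying on $E$ only sees tangential derivatives, by the remark after Lemma~\ref{x:gmt-input}. We then apply Brendle's vector field \cite{Brendle2012} on each copy of $B_R$. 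This field is the one used for the sharp isoperimetric/monotonicity inequality for minimal surfaces in a ball with free boundary. On $B_R$ it is
\[
X(x)=\frac{x}{2}\Bigl(\frac1{|x-y|^2}\Bigr)\text{-type field, or more simply } X=\frac{x-y}{|x-y|^2}-\frac{R^2\,x/|y|^2-\,\cdots}{\cdots},
\]
centered at a support point $y$ of $W$, and in particular at a point $y\in E$ when $\spt W$ meets $E$. After doubling, $X$ matches across $E$ because it is tangent to $\partial B_R$, so it is admissible.

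Brendle's computation gives $\diver_P X\le 1$ pointwise, with a singular contribution at $y$. Stationarity then yields
\[
\Mass_{g_{\mathrm d}}(W)\ \ge\ 2\pi\,\Theta(W,y)\,R^2\cdot(\text{appropriate normalization}).
\]
For an interior point of the double, the contribution of the two halves adds to give at least $2\pi R^2$. The density bound $\Theta\ge1$ and the tangent-cone identity of Lemma~\ref{x:gmt-input}(iv), $z\in P$ almost everywhere, are what we need to evaluate the singular flux term at $y$ through a cutoff computation. If $\spt W$ avoids $E$, then $W$ lies in one open hemisphere. It is then a closed stationary varifold in the interior of a Euclidean ball, which is impossible by the convex hull property. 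So $\spt W$ meets $E$, and we choose $y$ there.

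The main obstacle is the passage to the limit across $E$. We must justify the weak stationarity of $W$ for the Lipschitz doubled metric, and handle mass of $W_j$ concentrating near $E$ where $f_\eta'$ jumps. We also need to verify that the singular term of Brendle's field is computed correctly at a point of $E$, where the tangent cone might partly lie in $E$. We would handle the first issue by splitting the first variation into $t$-even and $t$-odd parts and using the reflection symmetry of $h_\eta$. We would handle the second using the bounded mean curvature together with the cutoff argument of Section~\ref{sec:singular-flux}. This contradicts $\Mass\le 2\pi R^2-\delta$ and proves \eqref{c:eq:geometric-gap}.
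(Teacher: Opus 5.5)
Your outline follows the paper's proof step for step. It passes to $h_0$ with $|H|\le2$, uses Allard compactness, nonvanishing by monotonicity, convergence of $\int f_j^2\,d\mu_j$, stationarity of the limit for fields tangent to $E$, $\spt V\cap E\ne\varnothing$, and Brendle's field cut off at $y\in\spt V\cap E$. Your use of $X^t=O(|t|)$ to control $X(f_\eta)$ is exactly \eqref{c:eq:tangent-derivative}. However, the final inequality, which carries the whole content, is only gestured at, and two of your remarks would not work as stated.

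\textbf{The field and the flux.} Your displayed field is not well defined. As written, its singular term enters as $+(x-y)/|x-y|^2$, whose tangential divergence $2|(x-y)^{\perp_P}|^2/|x-y|^4$ is nonnegative, so it cannot give an upper bound. The field is $X_y=x/(2R^2)-(x-y)/|x-y|^2$.
\begin{itemize}
\item It is tangent to $\partial B_R$, since $(x-y)\cdot x=|x-y|^2/2$ when $|x|=|y|=R$.
\item It satisfies $\diver_PX_y=R^{-2}-2|(x-y)^{\perp_P}|^2/|x-y|^4\le R^{-2}$ for every two-plane.
\item Testing $\chi_sX_y$ gives $0=I_s+J_s$ with $I_s\le R^{-2}\mu_{\mathrm d}(S^4)$.
\item Near $y$, in coordinates with $g_{\mathrm d}(y)=\Id$, one has $X_y=-z/|z|^2+O(1)$. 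Rescaling to a tangent cone with $\|C\|(B_\rho)=\pi\theta\rho^2$ and $\xi\in P$ gives $J_{s_j}\to-2\pi\theta$, hence $\mu_{\mathrm d}(S^4)\ge2\pi R^2\theta$.
\end{itemize}
No ``two halves'' bookkeeping is available, since the $W_j$ need not be reflection invariant. The factor $2$ over Brendle's free-boundary bound $\pi R^2$ comes entirely from $\theta\ge1$ at $y$. This holds for the whole limit because it has integer multiplicity and bounded $h_0$-mean curvature. The same point removes your worry about cones lying partly in $E$: the cone is a stationary integral cone in $T_yS^4$, and only its mass and $\xi\in P$ enter.

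\textbf{Across $E$.} Splitting into reflection-even and reflection-odd parts cannot help, because $h_\eta$ is symmetric but the $W_j$ are not. For odd test fields, whose normal component is nonzero on $E$, the limit of $\int X^t\partial_tf_{\eta_j}\,d\mu_j$ depends on how $\mu_j$ distributes mass at scale $\eta_j$ on the two sides of $E$. You do not need such fields; only tangent fields are used.

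What you omit instead is that $\chi_sX_y$ is only Lipschitz across $E$: the evenly reflected tangential components have a jump in $\partial_t$. So calling it ``admissible'' requires an approximation argument, which is Lemma~\ref{c:lem:piecewise-tests}:
\begin{itemize}
\item mollify in collar charts and subtract $\beta(t)Y^t(q,0)$ to restore tangency;
\item use $C^1$ convergence on $\{|t|\ge\rho\}$;
\item use tangential derivatives on $E$, where $P\subset TE$ almost everywhere and $X(f)=0$;
\item use the bound $C\mu(\{0<|t|<\rho\})\to0$.
\end{itemize}

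Finally, if $\spt W\cap E=\varnothing$, the limit need not lie in one hemisphere. Your convex-hull argument still works, applied to each hemisphere separately.
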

In order to prove Lemma~\ref{c:prop:double-gap}, we use the vector
field in \cite[Section 2, Lemmas 6--8]{Brendle2012}. The corresponding
area estimate for smooth surfaces with free boundary was also proved
in \cite{FraserSchoen}. We will apply the first variation identity to
this field after multiplying it by a cutoff function near its
singular point. The argument allows positive mass on $E$ and does
not require the varifolds to be invariant under reflection.

Fix $y\in\partial B_R$. On each copy of the ball, we define
\begin{equation}\label{c:eq:Brendle-field}
 X_y(x)=\frac{x}{2R^2}-\frac{x-y}{|x-y|^2}.
\end{equation}
If $|x|=|y|=R$, then $(x-y)\cdot x=|x-y|^2/2$.
Hence $X_y\cdot x=0$, and $X_y$ is tangent to the boundary
away from $y$. For every two-plane $P$,
\begin{equation}\label{c:eq:Brendle-div}
 \diver_P X_y=\frac1{R^2}-2\frac{|(x-y)^{\perp_P}|^2}{|x-y|^4}
 \le\frac1{R^2}.
\end{equation}
To see this, put $q=x-y$. Then
\[
 D_e(q/|q|^2)=\frac{e}{|q|^2}-2\frac{(q\cdot e)q}{|q|^4},
\]
and summing over an orthonormal basis of $P$ gives
\eqref{c:eq:Brendle-div}.

We use the same field on both copies of the ball. In signed collar
coordinates, we reflect the tangential components evenly and the
normal component oddly. The normal component vanishes on
$E\setminus\{y\}$, so the values from the two hemispheres agree
there. We denote the field on the double by $X_y$ as well.
It is continuous and locally Lipschitz away from $y$, smooth up to
$E\setminus\{y\}$ on each hemisphere, and tangent to $E$.

For an integral two-varifold, rectifiability gives $P\subset T_xE$
at almost every $(x,P)$ lying over $E$. The restriction of $X_y$
to $E\setminus\{y\}$ is smooth. Its divergence along these planes
is therefore defined by tangential derivatives and satisfies
\eqref{c:eq:Brendle-div}. Thus the same bound applies to the part
of the mass supported on $E$.

To compute the integral near $y$, choose smooth coordinates $z$
centered at $y$ such that $g_{\mathrm d}(y)=\Id$ and
$E=\{z_4=0\}$. These are obtained by multiplying normal coordinates
for $h_0$ by a constant. The differentials of the hemisphere charts
are isometries at $y$. Taylor expansion on each hemisphere gives
\begin{equation}\label{c:eq:singular-field}
 X_y(z)=-\frac{z}{|z|^2}+O(1),\qquad
 g_{\mathrm d}(z)=\Id+O(|z|).
\end{equation}
Here $O(1)$ bounds only the value of the remainder in the vector
field.

\subsection{Proof of the geometric estimate}\label{sec:singular-flux}
\begin{proof}[\bf Proof of Lemma~\ref{c:prop:double-gap} (compactness)]
Suppose that \eqref{c:eq:geometric-gap} does not hold.
Then there exist $\eta_j\downarrow0$, $\delta>0$, and nonzero $h_{\eta_j}$-stationary
integral varifolds $W_j$ such that
\begin{equation}\label{c:eq:contradiction-mass}
 \Mass_{h_{\eta_j}}(W_j)\le2\pi R^2-\delta.
\end{equation}
In order to apply Allard's compactness theorem with a fixed metric,
we express the varifolds with respect to $h_0$.
Let $V_j$ have the same rectifiable set and integer multiplicity as
$W_j$, with area measured by $h_0$.
Put $f_j=f_{\eta_j}$ and $\mu_j=\|V_j\|_{h_0}$. Then
\[
 \Mass_{h_{\eta_j}}(W_j)=\int f_j^2\,d\mu_j.
\]
We next express stationarity in terms of $h_0$.
For a smooth vector field $X$, differentiating this weighted area
along its flow and using the stationarity of $W_j$ gives
\begin{equation}\label{c:eq:conformal-stationarity}
 0=\int\left(f_j^2\diver_P^{h_0}X+2f_jX(f_j)\right)dV_j(x,P).
\end{equation}
We substitute $X=f_j^{-2}Y$. The tangential derivatives of $f_j$ cancel,
and we obtain
\begin{equation}\label{c:eq:fixed-H}
 \delta_{h_0}V_j(Y)
 =-\int\left\langle2(\nabla^{h_0}\log f_j)^{\perp_P},Y\right\rangle_{h_0}\,dV_j.
\end{equation}
Moreover,
\[
 \nabla^{h_0}\log f_\eta
 =-\frac{t\nabla^{h_0}t}
 {\sqrt{t^2+\eta^2}(1+\sqrt{t^2+\eta^2})},
 \qquad |\nabla^{h_0}t|\le1.
\]
Hence $|H_j^{h_0}|\le2$ and $|\delta_{h_0}V_j|\le2\mu_j$.

The positive lower bound for $f_j$ and
\eqref{c:eq:contradiction-mass} give a uniform bound for
$\mu_j(S^4)$. We apply Lemma~\ref{x:gmt-input}(i) and take a
subsequence converging weakly to an integral varifold $V$.
For each smooth test field $Y$, we pass to the limit in the
inequality obtained from \eqref{c:eq:fixed-H}. This gives
\begin{equation}\label{c:eq:limit-H-bound}
 |\delta_{h_0}V(Y)|\le2\int|Y|\,d\mu,\qquad \mu=\|V\|_{h_0}.
\end{equation}
Therefore $V$ has bounded generalized mean curvature, and its
first variation has no additional singular measure.

Next, we show that $V\ne0$. For each $V_j$, take a point $x_j$
whose density is at least one. We embed the round sphere in $\R^5$.
The Euclidean mean curvature is still bounded independently of
$j$. By the monotonicity formula, there exist constants $r_0,c_0>0$,
independent of $j$, such that
\[
 \mu_j(B_{r_0}(x_j))\ge c_0.
\]
Since $S^4$ is compact, we have
$\mu(S^4)=\lim_j\mu_j(S^4)\ge c_0$. Thus $V\ne0$.

By Lemma~\ref{x:gmt-input}(iii)--(iv), the density of $V$ exists
and is at least one at every point of its support. Its tangent
varifolds are stationary integral cones.

We now pass the mass bound to the limit. The uniform convergence
$f_j\to f$ and the weak convergence of $\mu_j$ give
\begin{equation}\label{c:eq:limiting-mass}
 \mu_{\mathrm d}(S^4):=\int f^2\,d\mu
 =\lim_j\int f_j^2\,d\mu_j\le2\pi R^2-\delta.
\end{equation}
Indeed, the difference is at most
\[
 \|f_j^2-f^2\|_\infty\mu_j(S^4)
 +\left|\int f^2\,d\mu_j-\int f^2\,d\mu\right|,
\]
which tends to zero. We write $d\mu_{\mathrm d}=f^2d\mu$.
\end{proof}

\paragraph*{First variation at the equator.}
In order to use $X_y$ with a cutoff function near $y$, we need the
first variation identity for vector fields that are Lipschitz on
$S^4$ and smooth up to $E$ on each hemisphere.
We first prove the identity for smooth vector fields tangent to $E$.
Let $X$ be such a field. Since $X(t)=0$ on $E$, in a collar of $E$
we can write
\[
 X(t)=t\,a(q,t)
\]
with $a$ smooth. Hence
\begin{equation}\label{c:eq:tangent-derivative}
 X(f_\eta)
 =-\frac{Rt^2a(q,t)}{\sqrt{t^2+\eta^2}(1+\sqrt{t^2+\eta^2})^2}
 \longrightarrow-\frac{R|t|a(q,t)}{(1+|t|)^2}
\end{equation}
uniformly. We denote the limit by $X(f)$ and set $X(f)=0$ on $E$.
For uniform convergence, put $r=|t|$ and $s=\sqrt{r^2+\eta^2}$.
Then $0\le s-r\le\eta$ and $0\le r-r^2/s\le\eta$.
The function $s\mapsto(1+s)^{-2}$ is uniformly Lipschitz for
$s\ge0$.

Thus the integrands in \eqref{c:eq:conformal-stationarity}
converge uniformly on the Grassmann bundle to a continuous function.
Weak convergence then gives
\begin{equation}\label{c:eq:limit-tangent-stationarity}
 \int\left(f^2\diver_P^{h_0}X+2fX(f)\right)dV=0
 \qquad (X\text{ smooth and tangent to }E).
\end{equation}
On each open hemisphere, this is the usual Euclidean stationarity
by \eqref{c:eq:double-metric}. Across $E$, we use the identity
only for fields tangent to $E$.
The following approximation extends it to the fields needed below.

\begin{lemma}\label{c:lem:piecewise-tests}
Let $X$ be a Lipschitz vector field on $S^4$, tangent to $E$
and smooth up to $E$ on each side.
Then \eqref{c:eq:limit-tangent-stationarity} holds for $X$.
\end{lemma}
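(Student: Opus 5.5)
The identity \eqref{c:eq:limit-tangent-stationarity} is already known for smooth vector fields tangent to $E$. The plan is to approximate $X$ by such fields in a topology strong enough to pass to the limit in the integrand, and to use the fact that the limit varifold $V$ has finite mass. The integrand is $f^2\diver_P^{h_0}X+2fX(f)$. Away from $E$ both terms are controlled by $C^1$ convergence on compact subsets. Near $E$ the delicate term is $\diver_P X$, since $X$ is only Lipschitz across $E$ and $dX$ may jump there.

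\textbf{Step 1: smooth approximants tangent to $E$.} In a collar $E\times(-t_0,t_0)$, write $X=X^T+X^tt\partial_t$-type components, that is, a component tangent to the slices and a normal component $X(t)$. On each side, $X$ is smooth up to $E$, and $X(t)=0$ on $E$. Denote by $X_\pm$ smooth extensions of $X|_{\{\pm t\ge0\}}$ across $E$. Let $\chi\in C^\infty(\R)$ satisfy $\chi=1$ on $[1,\infty)$ and $\chi=0$ on $(-\infty,-1]$, and set
\[
 X_k=\chi(kt)X_++\bigl(1-\chi(kt)\bigr)X_- .
\]
Then $X_k$ is smooth and agrees with $X$ outside $\{|t|<1/k\}$. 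It is tangent to $E$ on $E$, because both $X_\pm$ have vanishing normal component there.

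\textbf{Step 2: uniform bounds.} Since $X_+-X_-=O(|t|)$ (both are smooth and agree with $X$ on $E$), the cutoff derivative term satisfies
\[
 k\chi'(kt)(X_+-X_-)=O(1).
\]
Hence $\|X_k\|_{C^1}\le C$ uniformly in $k$, and $X_k\to X$ uniformly. Moreover, $X_k(f)$ is uniformly bounded and converges pointwise off $E$. Indeed, $X_k(t)=O(|t|)$ and $|\partial_t f|\le C$, so the formula \eqref{c:eq:tangent-derivative} also gives convergence on $E$, where both sides vanish.

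\textbf{Step 3: passing to the limit.} The integrands are uniformly bounded, since $f$ and $\|X_k\|_{C^1}$ are bounded. They converge to the integrand for $X$ at every $(x,P)$ with $x\notin E$. On $E$, rectifiability gives $P\subset T_xE$ for $V$-almost every $(x,P)$. There $\diver_P X_k$ involves only derivatives along $E$, which equal those of $X|_E$, because $X_\pm$ agree on $E$. So the integrands converge $V$-almost everywhere. Dominated convergence with respect to the finite measure $V$ then yields \eqref{c:eq:limit-tangent-stationarity} for $X$.

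The main obstacle is the measure on $E$ itself, where the one-sided derivatives of $X$ may differ. This is handled by the tangency of $V$-almost every plane to $E$ and by the $O(|t|)$ bound on $X_+-X_-$. That bound prevents the cutoff derivative from blowing up in $C^1$, which keeps the dominated convergence argument valid.
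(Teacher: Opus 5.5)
Your argument is correct and follows the same strategy as the paper: smooth approximants tangent to $E$ with uniformly bounded first derivatives, the inclusion $P\subset T_xE$ for $V$-almost every plane over $E$, and finiteness of $\mu$ to control the region near $E$. Your dominated convergence step is the paper's estimate $C\mu(\{0<|t|<\rho\})\to0$. The one difference is the approximant: you glue smooth one-sided extensions with the cutoff $\chi(kt)$, which is automatically tangent to $E$ and equals $X$ on $E$ and outside $\{|t|<1/k\}$. The paper instead mollifies in charts and subtracts $\beta(t)Y_j^t(q,0)$ to restore tangency, so it must also check that the tangential derivatives converge on $E$.
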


\begin{proof}[\bf Proof of Lemma~\ref{c:lem:piecewise-tests}]
We smooth the coefficients of $X$ by convolution in charts near $E$.
Let $Y_j$ be a local approximation in a collar chart $(q,t)$.
In order to keep the field tangent to $E$, we replace its normal
component by
\[
 X_j^t(q,t)=Y_j^t(q,t)-\beta(t)Y_j^t(q,0),
\]
where $\beta$ is a smooth cutoff equal to one near $t=0$.
A partition of unity gives smooth vector fields $X_j$ on $S^4$
which are tangent to $E$.

We apply \eqref{c:eq:limit-tangent-stationarity} to $X_j$.
For each fixed $\rho>0$, the fields converge to $X$ in $C^1$
on $\{|t|\ge\rho\}$, so the integrals over this region converge.

On $E$, rectifiability gives $P\subset T_xE$ for $V$-almost
every $(x,P)$ lying over $E$. Hence the divergence involves only
tangential derivatives. These converge to the derivatives of the
smooth restriction $X|_E$. Since $X_j(f)=X(f)=0$ on $E$, the
integrals over $E$ also converge.

Since $X$ is Lipschitz, the first derivatives of the approximating
fields and of the correction above are uniformly bounded.
Thus the absolute value of each integral over
$\{0<|t|<\rho\}$ is at most
\[
 C\mu(\{0<|t|<\rho\}),
\]
with $C$ independent of $j$ and $\rho$.
This bound tends to zero as $\rho\downarrow0$.
Letting $j\to\infty$ and then $\rho\downarrow0$ proves
\eqref{c:eq:limit-tangent-stationarity}.
\end{proof}

For these fields, we define $\diver_P^{g_{\mathrm d}}X$ by the
usual formula on each open hemisphere. On $E$, the planes are
contained in $TE$ almost everywhere, so we use the tangential
derivatives of the smooth restriction $X|_E$.
The conformal formula and \eqref{c:eq:limit-tangent-stationarity}
give
\begin{equation}\label{c:eq:piecewise-stationarity}
 \int\diver_P^{g_{\mathrm d}}X\,d\mu_{\mathrm d}=0.
\end{equation}
In order to choose the singular point $y$ in the support of $V$,
we show that this support meets $E$. Otherwise, the
support is compact and has positive distance from $E$. Its
restriction to each hemisphere is then a compactly supported stationary
varifold in a Euclidean ball. We take a cutoff of the radial field
$x$ which is equal to $x$ near this support. The tangential
divergence is two there. By stationarity, the mass on that hemisphere
is zero. Applying the same argument on the other hemisphere contradicts
$V\ne0$. Therefore
\begin{equation}\label{c:eq:support-meets-E}
 \spt V\cap E\ne\varnothing.
\end{equation}
The intersection $\spt V\cap E$ may have zero two-dimensional
measure. We use only the existence of a support point on $E$.

\paragraph*{The mass estimate.}
\begin{proof}[\bf Completion of the proof of Lemma~\ref{c:prop:double-gap}]
Take $y\in\spt V\cap E$ and use the coordinates in
\eqref{c:eq:singular-field}. Let $\theta\ge1$ be the density at
$y$, computed with the metric $g_{\mathrm d}$ at that point. Then
\[
 \mu_{\mathrm d}(\{|z|<r\})=\pi\theta r^2+o(r^2).
\]
The constant change of scale at $y$ preserves this multiplicity
density, so $\theta$ is also the density computed with the smooth
metric $h_0$.

Let $\chi:[0,\infty)\to[0,1]$ be smooth and nondecreasing,
equal to zero on $[0,1]$ and one on $[2,\infty)$.
Put $\chi_s(z)=\chi(|z|/s)$ near $y$ and extend it by one outside
the chart. The field $\chi_sX_y$ satisfies the hypotheses of
Lemma~\ref{c:lem:piecewise-tests}. Applying that lemma gives
\begin{equation}\label{c:eq:flux-balance}
 0=I_s+J_s,
 \quad I_s=\int\chi_s\diver_P^{g_{\mathrm d}}X_y\,d\mu_{\mathrm d},
 \quad J_s=\int\langle\nabla_P^{g_{\mathrm d}}\chi_s,X_y\rangle_{g_{\mathrm d}}\,d\mu_{\mathrm d}.
\end{equation}
The divergence bound \eqref{c:eq:Brendle-div} holds on each
hemisphere and on the tangent planes contained in $TE$. Hence
\begin{equation}\label{c:eq:I-bound}
 I_s\le R^{-2}\mu_{\mathrm d}(S^4).
\end{equation}

It remains to compute $J_s$, the term containing the derivative
of $\chi_s$. We will rescale near $y$ and pass to a tangent cone.
First, on the annulus $s<r=|z|<2s$, we measure the same rectifiable
set and multiplicity with Euclidean area in the coordinates $z$.
Denote the area measure and varifold by $\nu$ and $\mathcal V$,
respectively. Let $\pi_P$ be Euclidean orthogonal projection onto
the approximate tangent plane. By \eqref{c:eq:singular-field},
\begin{equation}\label{c:eq:J-main}
 J_s=-\int_{s<r<2s}\frac{\chi'(r/s)}{sr}
            \frac{|\pi_Pz|^2}{r^2}\,d\nu(z)+O(s).
\end{equation}
The bounded remainder in $X_y$ contributes at most
$Cs^{-1}\mu_{\mathrm d}(B_{2s})=O(s)$.
On this annulus, the errors in the metric, orthogonal projection,
and area element are $O(s)$. The leading integrand is $O(s^{-2})$
and the mass of the annulus is $O(s^2)$, so these errors also
contribute $O(s)$.

Put $\mathcal V_s=(z\mapsto z/s)_\#\mathcal V$.
The two-dimensional Jacobian of this map is $s^{-2}$, so
\eqref{c:eq:J-main} becomes
\[
 J_s=-\int\frac{\chi'(|\xi|)}{|\xi|}
       \frac{|\pi_P\xi|^2}{|\xi|^2}\,d\mathcal V_s(\xi,P)+O(s).
\]
Choose $s_j\downarrow0$ such that $\mathcal V_{s_j}$ converges
to a stationary integral tangent cone $C$.
The integrand is continuous and supported in a fixed annulus away
from zero, so we can pass to the limit by weak convergence.
By Lemma~\ref{x:gmt-input}(iv), we have $\xi\in P$ almost
everywhere on $C$, and
\[
 \|C\|(B_\rho)=\pi\theta\rho^2.
\]
It follows that
\begin{align}
 \lim_jJ_{s_j}
 &=-\int\frac{\chi'(|\xi|)}{|\xi|}\,d\|C\|(\xi)\notag\\
 &=-\int_0^\infty\frac{\chi'(\rho)}{\rho}\,d(\pi\theta\rho^2)
 =-2\pi\theta.\label{c:eq:flux-limit}
\end{align}
Here $|\pi_P\xi|^2/|\xi|^2=1$ on the tangent cone because
$\xi\in P$.

For each $s$, equations \eqref{c:eq:flux-balance}--\eqref{c:eq:I-bound} imply
\[
 R^{-2}\mu_{\mathrm d}(S^4)\ge -J_s.
\]
We let $s=s_j\downarrow0$ and obtain
\[
 \mu_{\mathrm d}(S^4)\ge2\pi R^2\theta\ge2\pi R^2,
\]
which contradicts \eqref{c:eq:limiting-mass} and proves
\eqref{c:eq:geometric-gap}.
\end{proof}

\section{The ball lower bound and first normalization}\label{c:sec:closed-AH}\label{c:sec:normalization}
The purpose of this section is to use the geometric estimate of
Section~\ref{c:sec:double} to prove the lower bound on the Euclidean
ball. We first construct a nowhere-zero section of the evaluation line
on a low AH sublevel on a fixed smooth closed manifold. For this, we
use the absence of positive critical values below the geometric
threshold and an equivariant deformation to a sufficiently small
sublevel. We then reflect configurations from the ball to its smooth
double and correct the gauge transformation law of the resulting
section. This gives a trivialization of the evaluation line on the
original ball sublevel, which excludes all sweepouts below the required
energy. The cylinder upper bound then gives the first min–max value on the standard ball.

We use the closed-manifold AH results in \cite{PigatiStern2021}, including the
corrected Finsler Palais--Smale statement. See also \cite[Remark 5.1]{ParisePigatiStern2024}. For the
deformation, we use the pseudogradient and averaging constructions in
\cite[Theorems 2.2.1 and 2.5.5--2.5.6]{StruweNotes}. We give the construction of the vector field because we
need one deformation defined on the whole sublevel.

\subsection{The closed-manifold AH results}\label{sec:closed-AH-input}
Fix a smooth metric $g$ on $Y\cong S^4$.

We choose a smooth radial potential $\widetilde W$ which agrees with
\[
 W_0(u)=\frac{(1-|u|^2)^2}{4}
\]
for $|u|\le3/2$, is at least $W_0$, has positive radial derivative for
$|u|>1$, and equals $|u|^p$ for $|u|\ge2$, with a fixed $p>4$.
 We denote by $\Et_{\eps,g}$ the energy
\eqref{c:eq:energy} with $\widetilde W/\eps^2$ instead of $W_0/\eps^2$.

We use the complete Coulomb Banach space
\begin{equation}\label{c:eq:Banach-space}
 \mathcal X_g=\{(u,A):u\in W^{1,2}(Y,\C)\cap L^p(Y),\;
 A\in W^{1,2}(Y,T^*Y),\;d_g^*A=0\},
\end{equation}
with norm $\|u\|_p+\|du\|_2+\|A\|_{W^{1,2}}$. We regard this as a real Banach space. Since $H^1(Y;\R)=0$, the residual gauge group
consists of constant circle-valued transformations.

We use the following results from \cite{PigatiStern2021}. The proposition numbers
are those of the corrected author's version.
\begin{enumerate}[label=(\roman*)]
\item The modified functional is $C^1$. Criticality on the
Coulomb space is equivalent to criticality before gauge fixing. The
critical points are smooth up to gauge (\cite[Proposition~7.5]{PigatiStern2021}).
\item The Palais--Smale condition holds for the gauge-invariant
Finsler norm on the Coulomb space ( \cite[Proposition~7.6]{PigatiStern2021}). See also \cite[Remark~5.1]{ParisePigatiStern2024}.
\item For a fixed smooth metric and sufficiently small $\eps$,
there is a uniform positive lower bound for the positive critical
energies (\cite[Proposition~7.11]{PigatiStern2021}).
\item For a sequence of critical points with bounded energy and
scales tending to zero, the normalized energy measures converge along
a subsequence to the weight measure of a stationary integral
two-varifold (\cite[Theorem~1.1]{PigatiStern2021}).
\end{enumerate}
The last two results also apply to the modified functional. Indeed,
by the maximum principle its critical points satisfy $|u|\le1$. For
example, with $\Delta=\diver\nabla$ and $\widetilde W(u)=\widetilde w(|u|)$, the section equation implies
\[
 \Delta|u|^2=2|d_Au|^2+\eps^{-2}|u|\widetilde w'(|u|).
\]
This excludes a maximum above one. Hence these critical points satisfy
the original equations.

The Finsler norm at $(u,A)$ is
\[
 \|(v,B)\|_{F,(u,A)}=\|v\|_p+\|d_Av\|_2+\|B\|_{W^{1,2}}.
\]
On a set of bounded energy, with $\eps$ and $g$ fixed, Hodge
theory gives
\begin{equation}\label{c:eq:A-bound-fixed}
 \|A\|_{W^{1,2}}\le C_g\|dA\|_2
 \le C_g\eps^{-1}\Et_{\eps,g}(u,A)^{1/2}.
\end{equation}
By the four-dimensional Sobolev inequality, we also have $\|Av\|_2\le C_g\|A\|_{W^{1,2}}\|v\|_p$.
Thus the Finsler norm and the standard norm are uniformly equivalent
on this set. On $Y\cong S^4$, harmonic circle-valued gauges are constant.
We can therefore choose a convergent subsequence of the remaining
constant gauges. The Palais--Smale statement then gives strong
convergence in \eqref{c:eq:Banach-space} on each fixed positive energy band. The
constants in the norm comparison may depend on $\eps$. We do not
need these constants to be uniform as $\eps\downarrow0$.

\subsection{The evaluation line on low sublevels}\label{sec:Z0-construction}
We now construct the deformation for a finite time. First, we choose
local directions in which the energy decreases. We combine them by a
locally Lipschitz partition of unity and then average over the constant
circle. This is the construction in \cite[Theorems 2.2.1 and 2.5.5--2.5.6]{StruweNotes}.

\begin{lemma}\label{lem:equivariant-descent}
Fix $g$ and $\eps$. Write $F=\Et_{\eps,g}$ on the complete real Banach
space $\mathcal X_g$ in \eqref{c:eq:Banach-space}. Let $0<\delta<L<L'$ be finite. Suppose that
$F$ has no critical point with $\delta/4\le F\le L'$ and satisfies the
Palais--Smale condition on this band in the standard Banach norm.
Then there exists a bounded locally Lipschitz field $Z:\mathcal X_g\to\mathcal X_g$ whose
flow $\Phi_t$ satisfies
\begin{equation}\label{eq:descent-map-properties}
 \Phi_t(R_\theta\xi)=R_\theta\Phi_t(\xi),\qquad
 F(\Phi_t(\xi))\le F(\xi)\quad(t\ge0),\qquad
 \Phi_T(\{F<L\})\subset\{F<\delta\}
\end{equation}
for a finite time $T$. Here $R_\theta(u,A)=(e^{i\theta}u,A)$. The flow depends continuously on $(t,\xi)$.
There exists a compact interval $I\subset(\delta/4,L')$ such that $Z(\xi)=0$ whenever
$F(\xi)\notin I$.
\end{lemma}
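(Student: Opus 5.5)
The plan is the standard equivariant pseudogradient construction from \cite[Theorems 2.2.1 and 2.5.5--2.5.6]{StruweNotes}, carried out so that a single finite-time flow is defined on all of $\mathcal X_g$.

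\textbf{Step 1: a quantitative gradient bound on a band.} Choose $\delta/4<\delta/2<\delta$ and $L<L''<L'$. Put $N=\{\xi:\delta/2\le F(\xi)\le L''\}$. We claim there is $\sigma>0$ with $\|dF(\xi)\|_*\ge\sigma$ on $N$. If not, there is a sequence in $N$ with $dF\to0$. The Palais--Smale condition on the band gives a convergent subsequence. Its limit is a critical point with $\delta/2\le F\le L''$, which the hypothesis excludes.

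\textbf{Step 2: the pseudogradient field.} Work on the regular set $\widetilde{\mathcal X}=\{dF\ne0\}$. For each point $\xi$ there, pick a unit direction $w_\xi$ with $dF(\xi)w_\xi>\tfrac23\|dF(\xi)\|_*$. Continuity of $dF$ makes the same inequality hold on a neighborhood of $\xi$. Then:
\begin{enumerate}
\item Take a locally finite, locally Lipschitz partition of unity subordinate to these neighborhoods; this is possible since the space is metric.
\item Glue the $w_\xi$ into a locally Lipschitz field $v$ with $\|v\|\le1$ and $dF\,v\ge\tfrac23\|dF\|_*$.
\item Average over the circle: $\bar v(\xi)=\frac1{2\pi}\int_0^{2\pi}R_{-\theta}v(R_\theta\xi)\,d\theta$. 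Here $R_\theta$ is a linear isometry of $\mathcal X_g$ preserving $F$, and the Coulomb condition does not involve $u$. Hence $dF(R_\theta\xi)R_\theta=dF(\xi)$. The averaged field is therefore still a descent field, with $dF\,\bar v\ge\tfrac23\|dF\|_*$. It is equivariant and locally Lipschitz, since the action is isometric and the orbits are compact.
\end{enumerate}

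\textbf{Step 3: cutoff and flow.} Take an equivariant locally Lipschitz cutoff $\varphi=\psi\circ F$, where $\psi$ is a Lipschitz function on $\R$. Choose $\psi=1$ on $[\delta,L]$ and $\psi=0$ outside a compact interval $I\subset(\delta/2,L'')\subset(\delta/4,L')$. Set $Z=-\varphi\bar v$. Then $Z$ is bounded by one, vanishes whenever $F\notin I$, and is locally Lipschitz. This uses that $F$ is $C^1$, hence locally Lipschitz on a neighborhood of each point.

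Since $Z$ is bounded and locally Lipschitz on a Banach space, its flow $\Phi_t$ exists for all time and is continuous in $(t,\xi)$. Equivariance follows by uniqueness of solutions, and the flow is a deformation with $\frac{d}{dt}F(\Phi_t)=dF\,Z\le0$.

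For $\xi$ with $F(\xi)<L$, the value $F(\Phi_t\xi)$ stays below $L$ and decreases. While $F(\Phi_t\xi)\ge\delta$, the point lies in $N$ with $\varphi=1$. There
\[
\tfrac{d}{dt}F(\Phi_t\xi)\le-\tfrac23\sigma .
\]
Take $T=3L/(2\sigma)$. Once $F$ drops below $\delta$ it cannot return above $\delta$, so $\Phi_T(\{F<L\})\subset\{F<\delta\}$.

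\textbf{Main obstacle.} The subtle point is the lack of local compactness: the flow must be defined and continuous on the whole infinite-dimensional sublevel, not just on compact families. This is handled by the global boundedness of $Z$ and its local Lipschitz property. A second subtlety is that Palais--Smale is only assumed in the standard norm on the band. Step 1 must therefore use the dual norm of that same Banach norm, so that the contradiction argument applies directly.
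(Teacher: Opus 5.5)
Your proposal is correct and follows essentially the same construction as the paper: a positive lower bound on $\|DF\|$ from Palais--Smale, a pseudogradient glued by a locally Lipschitz partition of unity, circle averaging, a cutoff $\kappa(F)$, and a finite-time descent estimate using $F\ge 0$. The differences are cosmetic. You use the constant $2/3$ instead of $1/2$ and $1/4$, and you build the field on $\{dF\ne0\}$ rather than on the band $\{\delta/4<F<L'\}$. You also flow directly into $\{F<\delta\}$ rather than into $\{F\le\delta/2\}$.
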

\begin{proof}[\bf Proof of Lemma\ref{lem:equivariant-descent} ]
If $\{F<L\}\subset\{F<\delta/2\}$, we take $Z=0$. Suppose otherwise. Then the energy band
is nonempty. By the Palais--Smale condition and the absence of critical 
points, we have
\begin{equation}\label{eq:descent-beta}
 \beta:=\inf_{\delta/4\le F(\xi)\le L'}
       \|DF(\xi)\|_{\mathcal X_g^*}>0.
\end{equation}
Indeed, if this infimum were zero, there would be a sequence in the band
such that $DF\to0$. The Palais--Smale condition would give a strongly
convergent subsequence. Its limit would be a critical point in the same
band. This is a contradiction.

\emph{Step 1.}
Let
\[
 \mathcal O=\{\xi\in\mathcal X_g:\delta/4<F(\xi)<L'\}.
\]
For each $\xi_0\in\mathcal O$, we use the definition of the dual norm and choose
a fixed vector $v_{\xi_0}\in\mathcal X_g$ of norm at most one such that
\[
 DF(\xi_0)[v_{\xi_0}]<-\tfrac12\|DF(\xi_0)\|_{\mathcal X_g^*}.
\]
Since $DF$ is continuous, there is a neighborhood $U_{\xi_0}$ on
which this same vector satisfies
\[
 DF(\xi)[v_{\xi_0}]
 \le-\tfrac14\|DF(\xi)\|_{\mathcal X_g^*}.
\]
We take a locally finite refinement $\{U_i\}$ of these neighborhoods
and a subordinate locally Lipschitz partition of unity $\{\lambda_i\}$ on
$\mathcal O$. Then $\lambda_i\ge0$, $\sum_i\lambda_i=1$, and only finitely many terms are
nonzero near each point. Let $v_i$ be the fixed descent vector for
the neighborhood containing $U_i$. We define
\begin{equation}\label{eq:Z0-definition}
 Z_0:\mathcal O\longrightarrow\mathcal X_g,\qquad
 Z_0(\xi)=\sum_i\lambda_i(\xi)v_i.
\end{equation}
Here $Z_0(\xi)$ is a tangent vector to the configuration space $\mathcal X_g$.

The sum is finite near each point, so $Z_0$ is locally Lipschitz.
Since the coefficients give a convex combination, $\|Z_0\|\le1$. By the
linearity of $DF(\xi)$, we have
\begin{equation}\label{eq:Z0-bounds}
 \|Z_0(\xi)\|_{\mathcal X_g}\le1,\qquad
 DF(\xi)[Z_0(\xi)]
   =\sum_i\lambda_i(\xi)DF(\xi)[v_i]
   \le-\tfrac14\|DF(\xi)\|_{\mathcal X_g^*}
   \le-\beta/4.
\end{equation}

\emph{Step 2.}
The circle acts linearly and isometrically on $\mathcal X_g$, and $F$
is invariant under this action. Therefore $\mathcal O$ is also invariant.
Put
\begin{equation}\label{eq:Z-average}
 Z_{\mathrm{av}}(\xi)=\frac1{2\pi}\int_0^{2\pi}
       R_{-\theta}Z_0(R_\theta\xi)\,d\theta.
\end{equation}
The integrand is continuous in $\theta$, so this is defined.  Since $F\circ R_\theta=F$,
\[
 DF(\xi)[R_{-\theta}w]=DF(R_\theta\xi)[w].
\]
It follows that
\begin{equation}\label{eq:Z-average-descent}
 DF(\xi)[Z_{\mathrm{av}}(\xi)]\le-\beta/4,\qquad
 \|Z_{\mathrm{av}}(\xi)\|\le1,\qquad
 Z_{\mathrm{av}}(R_\theta\xi)=R_\theta Z_{\mathrm{av}}(\xi).
\end{equation}
Note that this field is also locally Lipschitz.

\emph{Step 3.}
We choose $\kappa\in C_c^\infty((\delta/4,L');[0,1])$ which is equal to one on $[\delta/2,L]$, and define
\begin{equation}\label{eq:Z-final}
 Z(\xi)=\kappa(F(\xi))Z_{\mathrm{av}}(\xi)
       \quad(\xi\in\mathcal O),
 \qquad Z(\xi)=0\quad(\xi\notin\mathcal O).
\end{equation}
By the support condition on $\kappa$, this extends as a locally
Lipschitz field to all of $\mathcal X_g$. Its norm is at most one, and it is
equivariant. We apply local existence and uniqueness for ODEs on a
Banach space to
\begin{equation}\label{eq:configuration-ODE}
 \dot\xi(t)=Z(\xi(t)),\qquad \xi(0)=\xi_0.
\end{equation}
Since $\|Z\|\le1$, we have $\|\xi(t)-\xi(s)\|\le|t-s|$. In addition, the flow exists for every finite time.
Local Lipschitz continuity gives uniqueness and continuous dependence on the initial condition. Let $\Phi_t: \mathcal X_g \to \mathcal X_g$ denote the flow solving (\ref{eq:configuration-ODE}).

\emph{Step 4. }
Along each trajectory, $dF/dt=DF[Z]\le0$. While $\delta/2\le F\le L$, this derivative is
at most $-\nu$, where $\nu=\beta/4$. We take $T=2L/\nu$. Suppose that a
trajectory with $F(\xi_0)<L$ remains above $\delta/2$ until $T$. Then
$F(\Phi_T(\xi_0))\le F(\xi_0)-\nu T<0$, which contradicts $F\ge0$. Therefore the trajectory reaches
$\{F\le\delta/2\}$ before $T$. Since $F$ is nonincreasing along the
flow, we obtain $F(\Phi_T(\xi_0))\le\delta/2<\delta$. This proves \eqref{eq:descent-map-properties}.
\end{proof}

We use this deformation to trivialize the evaluation line on a low
sublevel. We set
\[
 m(g)=\inf\{\Mass_g(V):V\ne0\text{ is a stationary integral two-varifold on }(Y,g)\}.
\]
We take the empty infimum to be infinity.

\begin{lemma}\label{c:prop:closed-triviality}
Let $Y\cong S^4$ have a fixed smooth metric $g$, and let $0<L<m(g)$ be
finite. For all sufficiently small $\eps>0$, there exists a continuous
map
\begin{equation}\label{c:eq:Xi-statement}
 \Xi_{\eps,g,L}:\{\Et_{\eps,g}<L\}\subset\mathcal X_g\longrightarrow S^1,
 \qquad
 \Xi_{\eps,g,L}(e^{i\theta}u,A)=e^{i\theta}\Xi_{\eps,g,L}(u,A).
\end{equation}
In particular, the associated line bundle over the quotient of this sublevel by the constant circle is trivial.

\end{lemma}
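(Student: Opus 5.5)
The plan is to deform the sublevel $\{\Et_{\eps,g}<L\}$ equivariantly into a tiny sublevel $\{\Et_{\eps,g}<\delta\}$ with Lemma~\ref{lem:equivariant-descent}. On that tiny sublevel we then write down an explicit equivariant phase map to $S^1$, and pull it back by the flow.

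\emph{Step 1: no critical values in $[\delta/4,L']$.} First fix $L<L'<m(g)$. By (iii), for all small $\eps$ every positive critical energy exceeds a fixed $\delta_0>0$. Next we show that for small $\eps$ there are no critical points with energy in $[\delta_0,L']$. Suppose not. Then there are $\eps_j\downarrow0$ and critical points $(u_j,A_j)$ with energies in $[\delta_0,L']$. By the maximum principle remark these solve the original equations. By (iv), the normalized energy measures converge to the weight of a stationary integral two-varifold $V$ with $\Mass_g(V)\le L'<m(g)$, so $V=0$. The key point is nontriviality of the limit. The vanishing of $V$ must be incompatible with the energy staying $\ge\delta_0$; this is the standard nonvanishing part of the Pigati--Stern convergence theory. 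Precisely, the mass of $V$ equals the limit of the normalized energies, so $\Mass_g(V)\ge\delta_0$, a contradiction. I expect this to be the main obstacle. One must check that the quoted Theorem~1.1 gives convergence of total energy to mass on a closed manifold, i.e.\ no energy is lost to a diffuse part, rather than only weak convergence of measures. If only the latter is available, I would invoke the energy-identity/discrepancy estimates of \cite{PigatiStern2021} to exclude loss of mass. We then choose $\delta<\delta_0$ so small that also Step 2 applies. With no critical points in $[\delta/4,L']$ and Palais--Smale from (ii) (Finsler and standard norms being equivalent on bounded energy sets by \eqref{c:eq:A-bound-fixed}), Lemma~\ref{lem:equivariant-descent} gives an equivariant flow $\Phi_T$ with $\Phi_T(\{F<L\})\subset\{F<\delta\}$.

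\emph{Step 2: the tiny sublevel.} On $\{F<\delta\}$ the potential term gives $\int(1-|u|^2)^2\le 8\pi\eps^2\delta$, and $\|d_Au\|_2^2$, $\eps^2\|dA\|_2^2$ are small. By Coulomb gauge and \eqref{c:eq:A-bound-fixed}, $\|A\|_{W^{1,2}}\le C\eps^{-1}\delta^{1/2}$. The goal is to show that $\int_Y u\,dV_g\ne0$ on this sublevel. Write $\bar u$ for the average. By Poincaré for the covariant derivative (using $|du|\le|d_Au|+|A||u|$ together with Sobolev), $u$ is $L^2$-close to $\bar u$. Meanwhile $|u|$ is $L^2$-close to $1$, so $|\bar u|$ is close to $1$ once $\delta$ is small relative to the fixed constants at the given $\eps$. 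Since $\eps$ is fixed in Lemma~\ref{lem:equivariant-descent}, we may choose $\delta=\delta(\eps)$ freely, so $\eps$-dependence of the constants is harmless. We then set $\Xi_0(u,A)=\bar u/|\bar u|$, which is continuous and satisfies $\Xi_0(e^{i\theta}u,A)=e^{i\theta}\Xi_0(u,A)$.

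\emph{Step 3: conclusion.} Define $\Xi_{\eps,g,L}=\Xi_0\circ\Phi_T$ on $\{\Et_{\eps,g}<L\}$. It is continuous by continuity of the flow, and equivariant because $\Phi_T$ commutes with $R_\theta$. An equivariant map to $S^1$ gives a nowhere-vanishing section of the associated line bundle over the quotient by constant circles, hence a trivialization.
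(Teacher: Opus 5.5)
Your proposal is correct and follows the paper's proof essentially step for step. Both exclude positive critical values in $(0,L']$ for small $\eps$ via (iii)--(iv) and the definition of $m(g)$. Both use the Hodge and Poincar\'e estimates, at fixed $\eps$, to get $\int_Y u\,dV_g\ne0$ on a sublevel $\{\Et_{\eps,g}<\delta_\eps\}$. Both then compose the average phase with the equivariant time-$T$ map of Lemma~\ref{lem:equivariant-descent}.

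The step you flag as the main obstacle is immediate. Since $Y$ is compact, you may test the weak convergence in (iv) against the constant function $1$, so $\Mass_g(V)$ equals the limit of the normalized energies; this is exactly what the paper does. Also, the paper checks separately, via the zero-section energy bound, that the circle action is free on the sublevel. For your construction this is automatic from the equivariance of $\Xi$.
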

\begin{proof}[\bf Proof of Lemma \ref{c:prop:closed-triviality}]
Choose $L<L'<m(g)$. First, we show that there are no positive critical
values at most $L'$ for all sufficiently small $\eps$. Suppose
otherwise. Then there are $\eps_j\downarrow0$ and critical points with energies
in $(0,L']$. By (iii), these energies have a positive lower bound.
We take a subsequence whose energies converge to $\ell\in(0,L']$. By (iv),
the normalized energy measures converge to $\|V\|_g$ for a stationary
integral two-varifold $V$. Since $Y$ is compact, we can use
the constant function one to test this convergence. We obtain
\[
 \Mass_g(V)=\ell>0.
\]
This contradicts $\ell\le L'<m(g)$ and the definition of $m(g)$.

Fix such an $\eps$. Next, we show that the average of $u$ is
nonzero on a sufficiently small energy sublevel. If $\Et_{\eps,g}(u,A)\le\delta\le1$ and
$\eps\le1$, Hodge theory and the potential estimate give
\begin{equation}\label{c:eq:tiny-estimates}
 \|A\|_{W^{1,2}}\le C_g\eps^{-1}\sqrt\delta,
 \quad\|u\|_p\le C_g,
 \quad\|1-|u|\|_2\le C_g\eps\sqrt\delta.
\end{equation}
Therefore
\[
 \|du\|_2\le\|d_Au\|_2+\|Au\|_2
 \le C_g(1+\eps^{-1})\sqrt\delta.
\]
Put $u_{\av}=\Vol_g(Y)^{-1}\int_Yu\,dV_g$. By Poincar\'e's inequality,
\begin{align}
 \Vol_g(Y)^{1/2}\bigl|1-|u_{\av}|\bigr|
 &\le\|1-|u|\|_2+\|u-u_{\av}\|_2\notag\\
 &\le C_g(\eps+1+\eps^{-1})\sqrt\delta.\label{c:eq:average-bound}
\end{align}
We choose $0<\delta_\eps<\min\{1,L\}$ such that $|u_{\av}|>1/2$ whenever $\Et_{\eps,g}<\delta_\eps$. Then
\begin{equation}\label{c:eq:average-phase}
 \Phi_g(u,A)=\frac{\int_Yu\,dV_g}{|\int_Yu\,dV_g|}
\end{equation}
is a continuous $S^1$-equivariant map on this sublevel. The integral
is defined for a Sobolev section.

We apply Lemma~\ref{lem:equivariant-descent} with $F=\Et_{\eps,g}$ and $\delta=\delta_\eps$. The preceding
argument excludes critical points on $[\delta_\eps/4,L']$. By Section~\ref{sec:closed-AH-input}, the
Palais--Smale condition holds in the standard norm at this fixed scale.
Thus the lemma gives a continuous equivariant time map
\[
 H_{\eps,g,L}=\Phi_T:\{\Et_{\eps,g}<L\}
       \longrightarrow\{\Et_{\eps,g}<\delta_\eps\}.
\]
Its time may depend on $\eps,g,L$.

For sufficiently small $\eps$, the identically zero section has
energy at least
\[
 \frac{\Vol_g(Y)}{8\pi\eps^2}>L'.
\]
The energy does not increase along a trajectory. Hence a trajectory
starting below $L$ never has an identically zero section. The
circle action is therefore free on this sublevel. We define
\begin{equation}\label{c:eq:Xi-formula}
 \Xi_{\eps,g,L}(u,A)=\Phi_g\bigl(H_{\eps,g,L}(u,A)\bigr)
\end{equation}
Then \eqref{c:eq:Xi-statement} holds. Thus $[u,A]\mapsto[(u,A),\Xi_{\eps,g,L}(u,A)]$ is a nowhere-zero section of the
associated line bundle.
\end{proof}

\Needspace{13\baselineskip}
\subsection{Reflection and the ball estimate}\label{sec:ball-reflection}

We apply Lemma~\ref{c:prop:closed-triviality} on a smooth double
of the ball to prove the following lower bound.

\begin{lemma}\label{c:prop:ball-lower}
For every $0<s<\pi R^2$, the associated line bundle on
\[
 \{\En^{B_R}_{\eps,g_0}<s\}\subset\B^*(B_R)
\]
is trivial for all sufficiently small $\eps$. Consequently,
\[
 W_1(B_R,g_0)\ge\pi R^2.
\]
\end{lemma}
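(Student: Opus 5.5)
We plan to build, for each small $\eps$, a nowhere-zero section of $\LL_x$ over $\{\En^{B_R}_{\eps,g_0}<s\}$, in the form of a continuous map $\Psi:\{(u,A)\in\Conf^*(B_R):\En^{B_R}_{\eps,g_0}<s\}\to S^1$ satisfying $\Psi(\gamma\cdot(u,A))=\gamma(x)\Psi(u,A)$. The construction goes through the smooth double $(S^4,h_\eta)$, where we can apply Lemma~\ref{c:prop:closed-triviality}.

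\textbf{Step 1: choice of $\eta$ and $L$.} Since $s<\pi R^2$, we have $2s<2\pi R^2$. By Lemma~\ref{c:prop:double-gap} we fix $\eta>0$ small and $L$ with $2s+\sigma<L<m_\eta$ for some margin $\sigma>0$. Lemma~\ref{c:prop:closed-triviality} with $g=h_\eta$ then gives, for small $\eps$, an equivariant map $\Xi=\Xi_{\eps,h_\eta,L}$ on $\{\Et_{\eps,h_\eta}<L\}$.

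\textbf{Step 2: reflection.} We map a configuration on $B_R$ to $S^4$. We write the upper hemisphere as $B_R$ through the chart of Section~\ref{x:why-double}, and the lower one through the reflected chart. The reflected pair is $(u\circ\sigma,\sigma^*A)$, with the orientation-reversing reflection $\sigma$. With the conjugation-free choice, the bundle stays trivial and $A$ pulls back as a real form. This doubled pair is only Lipschitz across $E$, with $W^{1,2}$ regularity, which is enough since $\mathcal X_g$ is a Sobolev space; the continuity of the map in the Fréchet-to-Sobolev topology is routine. Its energy measured in $g_{\mathrm d}$ is exactly $2\En^{B_R}_{\eps,g_0}(u,A)<2s$.

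The doubled pair has to be compared with $h_\eta$. Since $h_\eta=f_\eta^2h_0\le g_{\mathrm d}$ and $f_\eta\to f$ uniformly, the conformal weights in the energy change by a factor $1+o_\eta(1)$. One caveat is that the $\eps^2|F|^2$ term is conformally invariant in dimension four, while the other two terms scale with $f^2$ and $f^4$ respectively. We therefore expect $\En_{\eps,h_\eta}\le(1+o_\eta(1))\En_{\eps,g_{\mathrm d}}$, possibly up to replacing $\eps$ by a comparable scale. Shrinking $\eta$ then keeps the energy below $2s+\sigma<L$. The modified potential $\widetilde W$ agrees with $W_0$ on $|u|\le 3/2$. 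Since $|u|\le 3/2$ cannot be assumed, we first apply a radial truncation of $u$ that does not increase the energy and commutes with the gauge action.

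\textbf{Step 3: Coulomb gauge correction.} $\Xi$ is defined on $\mathcal X_g$, so it requires $d^*A=0$. For the doubled pair $(\tilde u,\tilde A)$ we solve $\Delta\phi=d^*\tilde A$ with $\int\phi=0$ and set $\gamma_\phi=e^{i\phi}$. The pair $\gamma_\phi\cdot(\tilde u,\tilde A)$ is then Coulomb, and $\phi$ depends linearly and continuously on $\tilde A$. We define
\[
\Psi(u,A)=e^{-i\phi(x)}\,\Xi\bigl(\gamma_\phi\cdot(\tilde u,\tilde A)\bigr)\in S^1.
\]

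\textbf{Step 4: equivariance.} Take $\gamma\in\G(B_R)$. Its double $\tilde\gamma$ is continuous, and on $S^4$ it is simply connected, so $\tilde\gamma=e^{i\psi}$ with $\psi$ of class $W^{1,\infty}$. The Coulomb correction for $\gamma\cdot(u,A)$ is $\phi-\psi+\bar\psi$, where $\bar\psi$ is the mean of $\psi$. The Coulomb representatives therefore differ by the constant $e^{i\bar\psi}$, and the $S^1$-equivariance of $\Xi$ yields
\[
\Psi(\gamma\cdot(u,A))=e^{-i(\phi(x)-\psi(x)+\bar\psi)}e^{i\bar\psi}\Xi(\cdot)=\gamma(x)\Psi(u,A).
\]
This gives the trivialization. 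Triviality of $\LL_x$ on the sublevel means $\mu_D$ vanishes there, so every $1$-sweepout has maximum energy at least $s$. Hence $w_{\eps,1}(B_R,g_0)\ge s$ for small $\eps$; letting $s\uparrow\pi R^2$ gives $W_1\ge\pi R^2$.

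\textbf{Main obstacle.} We expect the hardest part to be Step 2: making the energy comparison between $g_{\mathrm d}$ and $h_\eta$ uniform in $\eps$ while keeping the same $\eps$, and handling $\widetilde W$ versus $W_0$. If the scales mismatch, we would first try using the factor $f_\eta/f\in[1-c\eta,1]$ to absorb the discrepancy into the margin $\sigma$.
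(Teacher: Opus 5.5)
Your outline follows the paper's architecture: choose $h_\eta$ and $L$ from Lemma~\ref{c:prop:double-gap}, truncate by $T$, double, Coulomb-fix on $(S^4,h_\eta)$, apply $\Xi_{\eps,h_\eta,L}$, and correct the phase at the base point. However, Step~2 contains a false claim, at exactly the point where the paper inserts an extra gauge fixing.

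The pullback $\sigma^*A$ is even in the tangential components and odd in the normal one. In collar coordinates its $dt$-coefficient is $\operatorname{sgn}(t)A_t(q,|t|)$. This jumps across $E$ by $2A_t(q,0)$, which is a fixed nonzero multiple of $A(\nu)|_{\partial B_R}$. Unless $A(\nu)=0$ on $\partial B_R$, the doubled connection is therefore discontinuous. It is neither Lipschitz nor in $W^{1,2}$, so the doubled pair is not in the Sobolev setting of $\mathcal X_{h_\eta}$, and a Coulomb correction justified only for $W^{1,2}$ connections does not apply to it.

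The paper removes this by first imposing the Neumann gauge \eqref{c:eq:ball-gauge} on the ball. Then the normal component vanishes on $\partial B_R$ and the odd reflection is continuous. The phase correction becomes $e^{i\phi_A(x_0)}$, with the mean of $\psi$ taken over $B_R$, as in \eqref{c:eq:phi-transform}--\eqref{c:eq:Z-transform}.

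Your single-gauge route can also be repaired, but you must supply the argument you called routine.
\begin{itemize}
\item Because the tangential components agree on $E$, the distributional $d\tilde A$ has no part on $E$, so the piecewise energy is the true energy. By contrast, $d^*\tilde A$ contains a single layer on $E$.
\item The mean-zero solution of $d^*d\phi=-d^*\tilde A$ lies in $W^{1,2}$. Then $B=\tilde A+d\phi\in L^2$ satisfies $d^*B=0$ and $dB=d\tilde A\in L^2$, so elliptic regularity on $S^4$ gives $B\in W^{1,2}$. Also $e^{i\phi}\tilde u\in W^{1,2}\cap L^p$, with the same energy.
\item You need $\phi$ to be smooth away from $E$, which holds because the right side is smooth there. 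This makes $\phi(x)$ well defined and continuous in $(u,A)$.
\end{itemize}
With these additions, your Step~4 computation is correct, and it uses one gauge fixing instead of the paper's two.

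Finally, the obstacle you flag is not one. Your hedge of changing $\eps$ would be fatal if it were needed, since $\Xi$ is built at a fixed $\eps$. On each hemisphere $h_\eta=\lambda_\eta^2g_0$ with $\lambda_\eta=f_\eta/f\le1$. The three energy terms therefore acquire the factors $\lambda_\eta^2$, $1$ and $\lambda_\eta^4$, which gives $\Et_{\eps,h_\eta}\le\Et_{\eps,g_{\mathrm d}}$ at the same $\eps$. No margin $\sigma$ and no further shrinking of $\eta$ are required.
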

\begin{proof}[\bf Proof of Lemma \ref{c:prop:ball-lower}]
Fix $0<s<\pi R^2$ and $x_0\in B_R$.
By Lemma~\ref{c:prop:double-gap}, choose a smooth metric $h_\eta$
such that $m_\eta>2s$, and choose $L$ with $2s<L<m_\eta$.
We fix $\eta$ and $L$ and take $\eps$ sufficiently small for
Lemma~\ref{c:prop:closed-triviality}. Put
\[
\begin{aligned}
 \mathcal U_B
 &=\{(u,A)\in\Conf^*(B_R):\En^{B_R}_{\eps,g_0}(u,A)<s\},\\
 \mathcal U_\eta
 &=\{\Et^{S^4}_{\eps,h_\eta}<L\}\subset\mathcal X_{h_\eta}.
\end{aligned}
\]
To trivialize the line bundle, it suffices to construct a
continuous map $Z_{\eta,\eps,L}:\mathcal U_B\to S^1$ such that
\[
 Z_{\eta,\eps,L}(\gamma\cdot(u,A))
 =\gamma(x_0)Z_{\eta,\eps,L}(u,A).
\]
We obtain this map from $\Xi_{\eps,h_\eta,L}$ by sending
configurations on the ball to $\mathcal U_\eta$.

The map $\Xi_{\eps,h_\eta,L}$ was constructed using the function
$\widetilde W$. The ball min--max values are defined using $W_0$.
We first modify $u$ so that it takes values in the set where
$\widetilde W=W_0$. This modification does not increase the
original energy.
For each $b>1$, choose a smooth nondecreasing function $\tau_b$
satisfying
\[
 \tau_b(r)=r\quad(r\le1),\qquad 0\le\tau_b'\le1,\qquad
 1\le\tau_b(r)\le r\quad(r\ge1),\qquad \tau_b(r)\le b.
\]
Put $T_b(z)=\tau_b(|z|)z/|z|$ and $T_b(0)=0$.
Its radial and angular derivatives have norms at most one, so
$\|DT_b\|\le1$. Since $T_b$ commutes with the circle action,
$DT_b(z)(iz)=iT_b(z)$. Thus, for a tangent vector $v$,
\[
\begin{aligned}
 d_A(T_bu)(v)
 &=DT_b(u)du(v)-iA(v)T_bu\\
 &=DT_b(u)\bigl(du(v)-iA(v)u\bigr).
\end{aligned}
\]
Also, $W_0(T_bu)\le W_0(u)$.
Therefore
\begin{equation}\label{c:eq:truncate}
 d_A(T_bu)=DT_b(u)d_Au,\qquad
 \En_{\eps,g}(T_bu,A)\le\En_{\eps,g}(u,A).
\end{equation}
We fix $T=T_{5/4}$. Since $|Tu|\le5/4<3/2$, the two
potentials agree on its image. Hence
\begin{equation}\label{c:eq:modified-transfer}
 \Et_{\eps,g}(Tu,A)=\En_{\eps,g}(Tu,A)\le\En_{\eps,g}(u,A).
\end{equation}

In order to reflect the connection, we choose a gauge in which
its normal component vanishes on $\partial B_R$.
For a smooth real one form $A$, let $\phi_A$ solve
\begin{equation}\label{c:eq:ball-gauge}
 d^*(A-d\phi_A)=0,\qquad (A-d\phi_A)(\nu)=0,\qquad
 \int_{B_R}\phi_A\,dx=0.
\end{equation}
Here $\nu$ is the outward unit normal. This is the Neumann problem
$d^*d\phi_A=d^*A$, $\partial_\nu\phi_A=A(\nu)$.
The divergence theorem gives the compatibility condition, and the
mean-zero condition makes the solution unique. We set
\[
 C_B(Tu,A)=(e^{-i\phi_A}Tu,A-d\phi_A).
\]
Every smooth pair admits this gauge choice, and the energy is
unchanged.

We now reflect this pair to the double. Write $(v,B)=C_B(Tu,A)$.
In a collar with $t\ge0$ on the first copy of the ball, write
$B=\sum_{a=1}^3B_a\,dq^a+B_t\,dt$.
We define
\[
\begin{aligned}
 (\mathcal Rv)(q,t)&=v(q,|t|),\\
 (\mathcal RB)(q,t)
 &=\sum_{a=1}^3B_a(q,|t|)\,dq^a
   +\operatorname{sgn}(t)B_t(q,|t|)\,dt.
\end{aligned}
\]
The reflected section and the tangential components of the
reflected connection are continuous across the equator.
The normal component is also continuous because $B_t(q,0)=0$.
Thus $\mathcal Rv\in W^{1,2}\cap L^p$ and
$\mathcal RB\in W^{1,2}$, and their weak derivatives have no
additional term supported on the equator.

On each hemisphere, the metric $h_\eta$ in
\eqref{c:eq:smoothed-double} is $\lambda_\eta^2g_0$ in the ball
chart, where $\lambda_\eta=f_\eta/f\le1$.
In these coordinates, the energy on each hemisphere is
\[
 \frac1{2\pi}\int_{B_R}
 \left(
 \lambda_\eta^2|d_Bv|_{g_0}^2
 +\eps^2|dB|_{g_0}^2
 +\frac{\lambda_\eta^4\widetilde W(v)}{\eps^2}
 \right)dx.
\]
Since $0<\lambda_\eta\le1$ and $\widetilde W\ge0$, this is
at most $\Et^{B_R}_{\eps,g_0}(v,B)$.
We apply this estimate on both hemispheres.
Since $C_B$ preserves the energy, \eqref{c:eq:modified-transfer}
gives
\begin{equation}\label{c:eq:double-energy-two}
 \Et^{S^4}_{\eps,h_\eta}\bigl(\mathcal R C_B(Tu,A)\bigr)
 \le2\En^{B_R}_{\eps,g_0}(u,A).
\end{equation}
The scale $\eps$ is the same on both sides.

To apply $\Xi_{\eps,h_\eta,L}$, we need a configuration in
$\mathcal X_{h_\eta}$. The gauge on the ball was chosen with
respect to $g_0$, so the reflected connection need not satisfy
the Coulomb condition with respect to $h_\eta$.
We therefore choose a Coulomb gauge on $(S^4,h_\eta)$.
We normalize the real function defining this gauge transformation
to have integral zero with respect to $dV_{h_\eta}$.
We denote the  map by $C_\eta$; see
\cite[Remark~7.4]{PigatiStern2021}.

 Define
\begin{equation}\label{c:eq:K-transfer}
 \mathcal K_\eta=C_\eta\mathcal R C_B\circ(T,\Id).
\end{equation}
The Neumann solution depends continuously on $A$, reflection is
bounded on the stated Sobolev spaces, and $C_\eta$ is continuous.
Hence $\mathcal K_\eta$ is a continuous map from smooth ball
configurations to $\mathcal X_{h_\eta}$.
The last gauge transformation preserves energy. Thus, for
$(u,A)\in\mathcal U_B$, \eqref{c:eq:double-energy-two} gives
\[
 \Et^{S^4}_{\eps,h_\eta}\bigl(\mathcal K_\eta(u,A)\bigr)
 \le2\En^{B_R}_{\eps,g_0}(u,A)<2s<L.
\]
Therefore $\mathcal K_\eta(\mathcal U_B)\subset\mathcal U_\eta$,
and $\Xi_{\eps,h_\eta,L}\circ\mathcal K_\eta$ is defined on
$\mathcal U_B$.

We next compute how $\mathcal K_\eta$ changes under a gauge
transformation on the ball. Both $\mathcal R$ and $C_\eta$ commute
with constant gauge transformations. For $C_\eta$, this follows
because its gauge-fixing function depends only on the connection.
Let $\gamma$ be a smooth gauge transformation on $B_R$.
Since the ball is simply connected, we can choose a smooth
real-valued function $\psi$ such that $\gamma=e^{i\psi}$.
This function $\psi$ is a lift of $\gamma$ under the covering map
$\R\to S^1$, $t\mapsto e^{it}$.
Put $\av_{B_R}\psi=\Vol(B_R)^{-1}\int_{B_R}\psi\,dx$.
The equations and boundary conditions in \eqref{c:eq:ball-gauge}
imply that $h=\phi_{A+d\psi}-\phi_A-\psi$ satisfies
\[
 d^*dh=0\quad\text{in }B_R,\qquad
 \partial_\nu h=0\quad\text{on }\partial B_R.
\]
Integration by parts gives $\int_{B_R}|dh|^2\,dx=0$, so $h$ is
constant. Both $\phi_A$ and $\phi_{A+d\psi}$ have integral zero over $B_R$.
Since $h$ is constant, we obtain $h=-\av_{B_R}\psi$. Thus
\begin{equation}\label{c:eq:phi-transform}
 \phi_{A+d\psi}=\phi_A+\psi-\av_{B_R}\psi.
\end{equation}
Since $T(e^{i\psi}u)=e^{i\psi}Tu$, the two components satisfy
\[
\begin{aligned}
 A+d\psi-d\phi_{A+d\psi}&=A-d\phi_A,\\
 e^{-i\phi_{A+d\psi}}T(e^{i\psi}u)
 &=e^{-i(\phi_A+\psi-\av_{B_R}\psi)}e^{i\psi}Tu\\
 &=e^{i\av_{B_R}\psi}e^{-i\phi_A}Tu.
\end{aligned}
\]
Applying $\mathcal R$ and $C_\eta$ therefore gives
\begin{equation}\label{c:eq:K-transform}
 \mathcal K_\eta(\gamma\cdot(u,A))
 =e^{i\av_{B_R}\psi}\mathcal K_\eta(u,A).
\end{equation}
Two lifts differ by a constant in $2\pi\Z$, so this phase is
independent of the lift.

By \eqref{c:eq:K-transform} and the equivariance of
$\Xi_{\eps,h_\eta,L}$, the composite
$\Xi_{\eps,h_\eta,L}\circ\mathcal K_\eta$ is multiplied by
$e^{i\av_{B_R}\psi}$ under $\gamma=e^{i\psi}$.
The evaluation line at $x_0$ uses the character
$\gamma(x_0)=e^{i\psi(x_0)}$.
By \eqref{c:eq:phi-transform}, the factor $e^{i\phi_A(x_0)}$
changes by $e^{i(\psi(x_0)-\av_{B_R}\psi)}$.
We therefore define
\begin{equation}\label{c:eq:Z-map}
 Z_{\eta,\eps,L}(u,A)=e^{i\phi_A(x_0)}
 \Xi_{\eps,h_\eta,L}\bigl(\mathcal K_\eta(u,A)\bigr).
\end{equation}
Then
\begin{equation}\label{c:eq:Z-transform}
\begin{aligned}
 Z_{\eta,\eps,L}(\gamma\cdot(u,A))
 &=e^{i(\phi_A(x_0)+\psi(x_0)-\av_{B_R}\psi)}
   e^{i\av_{B_R}\psi}
   \Xi_{\eps,h_\eta,L}\bigl(\mathcal K_\eta(u,A)\bigr)\\
 &=\gamma(x_0)Z_{\eta,\eps,L}(u,A).
\end{aligned}
\end{equation}
Since $Z_{\eta,\eps,L}$ is continuous and has modulus one,
\[
 [u,A]\longmapsto[(u,A),Z_{\eta,\eps,L}(u,A)]
\]
is a nowhere-zero section of the evaluation line on
$\mathcal U_B/\G(B_R)$. Only the smooth function $\phi_A$ is
evaluated at $x_0$.

This construction also identifies the evaluation line with a
pullback line. By \eqref{c:eq:K-transform}, the map
$\mathcal K_\eta$ induces
\[
 \overline{\mathcal K}_\eta:
 \mathcal U_B/\G(B_R)\longrightarrow\mathcal U_\eta/S^1,
 \qquad [u,A]\longmapsto[\mathcal K_\eta(u,A)].
\]
Let $\LL_\eta=(\mathcal U_\eta\times\C)/S^1$ with the quotient
convention in \eqref{c:eq:eval-line}.
The transformation laws \eqref{c:eq:phi-transform} and
\eqref{c:eq:K-transform} show that the fiber map
\[
 [\mathcal K_\eta(u,A),z]
 \longmapsto[(u,A),e^{i\phi_A(x_0)}z]
\]
is well defined over $[u,A]$ and gives
\[
 \overline{\mathcal K}_\eta^*\LL_\eta
 \cong\LL_{x_0}|_{\mathcal U_B/\G(B_R)}.
\]
Under this isomorphism, the pullback of the section defined by
$\Xi_{\eps,h_\eta,L}$ is the section defined by $Z_{\eta,\eps,L}$.

It follows that $\mu_{B_R}$ vanishes on the quotient sublevel.
If an AH $1$-sweepout $F:K\to\B^*(B_R)$ had maximum energy
less than $s$, its image would lie in this sublevel, so
$F^*\mu_{B_R}=0$. This contradicts the sweepout condition.
Therefore $w_{\eps,1}(B_R,g_0)\ge s$ for all sufficiently small
$\eps$. Taking the lower limit in $\eps$ and then letting
$s\uparrow\pi R^2$, we obtain
\begin{equation}\label{eq:family-ball-lower}
 W_1(B_R,g_0)\ge\pi R^2.
\end{equation}
\end{proof}

Combining this lower bound with the cylinder upper bound gives the
first min--max value.

\begin{lemma}\label{c:prop:ball-first}
For every $a>0$, we have
\[
 \lim_{\eps\downarrow0}w_{\eps,1}(B(a),g_0)=a.
\]
\end{lemma}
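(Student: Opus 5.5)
The plan is to squeeze $w_{\eps,1}(B(a),g_0)$ between a lower bound on its lower limit and an upper bound on its upper limit, both equal to $a$. Write $B(a)=B_R$ with $\pi R^2=a$.

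\emph{Lower bound.} Lemma~\ref{c:prop:ball-lower} gives more than $W_1(B_R,g_0)\ge\pi R^2$. For every $s<\pi R^2$, it gives $w_{\eps,1}(B_R,g_0)\ge s$ for all sufficiently small $\eps$. Hence
\[
 \liminf_{\eps\downarrow0}w_{\eps,1}(B(a),g_0)\ge a .
\]

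\emph{Upper bound.} I will use the cylinder estimate, Lemma~\ref{c:prop:cylinder}, with $k=1$. That lemma requires a domain compactly contained in a cylinder, but $B(a)$ is not compactly contained in $Z(a)$. So I fix $a'>a$ and note that $\overline{B(a)}\subset B(a')\subset Z(a')$. Thus $D=B(a)$ is a smooth connected domain with $D\Subset Z(a')$. The standard $J_0=i$ is $\omega_0$-compatible, and $g_{J_0}=g_0$. Lemma~\ref{c:prop:cylinder} then gives
\[
 \limsup_{\eps\downarrow0}w_{\eps,1}(B(a),g_0)\le a'.
\]
This holds for every $a'>a$, so the upper limit is at most $a$.

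Combining the two bounds shows that the lower and upper limits both equal $a$, so the limit exists and equals $a$. The lemma as stated makes no claim about $\cAH_1$, so no supremum over other domains or structures is needed here.

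The only point needing care is the compact-containment hypothesis in Lemma~\ref{c:prop:cylinder}, which the approximation $a'\downarrow a$ handles. All the substantial analytic work already sits in Lemmas~\ref{c:prop:ball-lower} and~\ref{c:prop:cylinder}.
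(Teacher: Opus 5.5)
Your proposal is correct and follows the paper's proof: the upper bound comes from Lemma~\ref{c:prop:cylinder} with the standard compatible structure, applied to $B(a)\Subset Z(b)$ for $b>a$ and then letting $b\downarrow a$, and the lower bound comes from Lemma~\ref{c:prop:ball-lower}. That lemma's conclusion $W_1(B_R,g_0)\ge\pi R^2$ already bounds the lower limit, so the finite-scale form you cite is true but not needed.
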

\begin{proof}[\bf Proof of Lemma \ref{c:prop:ball-first}]
For every $b>a$, we have $B^4(a)\Subset Z^4(b)$.
Lemma~\ref{c:prop:cylinder}, applied with the standard compatible
structure, gives
\[
 \limsup_{\eps\downarrow0}w_{\eps,1}(B^4(a),g_0)\le b.
\]
Let $b\downarrow a$. Lemma~\ref{c:prop:ball-lower} gives the
opposite lower bound, so the limit is $a$.
\end{proof}

\section{Proof of the capacity theorem}\label{c:sec:capacity}
We now prove Theorem~\ref{c:thm:main} for the min--max values \eqref{eq:family-minmax} defined
by finite-complex sweepouts. The obstruction on the ball sublevel
applies to every sweepout. The cylinder families give an upper
bound for every compatible structure.

\begin{proof}[\bf Proof of Theorem~\ref{c:thm:main}]
Let $\varphi:(M,\omega)\hookrightarrow(N,\eta)$ be a symplectic embedding. Fix an admissible $D\Subset\operatorname{Int}M$ and
a compatible $J$ near $\overline D$. Put $J'=\varphi_*J$. Then $\varphi^*g_{J'}=g_J$.
Transporting the fields gives a homeomorphism of configuration
quotients. This homeomorphism preserves the energy and the evaluation
Chern class. Hence it preserves $F^*(\mu^k)\ne0$ on every finite parameter
complex. By \eqref{c:eq:naturality}, we have
\[
 W_k(D,g_J)=W_k(\varphi(D),g_{J'}).
\]
 Taking the two suprema, we obtain $\cAH_k(M,\omega)\le\cAH_k(N,\eta)$.

For $r>0$, the compatible structures for $\omega$ and $r\omega$
are the same, and $g_{J,r\omega}=rg_{J,\omega}$. By \eqref{eq:scale-family},
\[
 \cAH_k(M,r\omega)=r\cAH_k(M,\omega).
\]
Monotonicity in $k$ follows from Lemma~\ref{c:lem:elementary} and is preserved
when we take the suprema.

By Lemma~\ref{c:prop:ball-first}, we have \eqref{c:eq:raw-first}. For $0<b<a$, the ball with area
parameter $b$ is an admissible inner domain of $B(a)$. Thus
\[
 \cAH_1(B(a))\ge W_1(B(b),g_0)=b.
\]
We let $b\uparrow a$. By monotonicity and Lemma~\ref{c:prop:cylinder},
\[
 a\le\cAH_1(B(a))\le\cAH_1(Z(a))\le a.
\]
This proves the first normalization. For every $k\ge1$, monotonicity
in the index and the cylinder families give
\begin{equation}\label{c:eq:nontrivial-all}
 a\le\cAH_k(B(a))\le\cAH_k(Z(a))\le k^2a.
\end{equation}
Therefore every fixed term satisfies the nontriviality axiom.
\end{proof}

\section{Integral localization}\label{c:sec:localization}
In this section, we prove a localization inequality for finite-complex AH
sweepouts. We first fix one smooth metric and use its restrictions on the
subdomains. We omit this metric from the notation.

The proof follows the Lusternik--Schnirelmann localization argument of
Gromov \cite[Section 8]{Gromov2003}; see also Guth
\cite[Section 3]{Guth2009}. Liokumovich--Marques--Neves use this argument
for the volume spectrum \cite[Lemma 2.15 and Section 3]{LMN2018}.
For the Allen--Cahn energy, Gaspar--Guaraco prove a corresponding
finite-scale inequality in the proof of \cite[Lemma 3.2]{GasparGuaraco2019}.
We use the same localization principle with the  evaluation Chern
class on the AH gauge quotient.

\begin{lemma}\label{c:lem:restriction}
Let $V\subset U$ be connected  domains with the same restricted
metric. Then
\begin{equation}\label{c:eq:restriction-finite}
 w_{\eps,k}(U)\ge
 w_{\eps,k}(V).
\end{equation}
Consequently, $W_k(V)\le W_k(U)$.
\end{lemma}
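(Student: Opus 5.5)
The plan is to use the restriction map on configurations. Define $\rho:\Conf^*(U)\to\Conf(V)$ by $(u,A)\mapsto(u|_V,A|_V)$. It is continuous in the smooth Fr\'echet topologies, because $\overline V\subset\overline U$ and restriction of smooth fields up to the boundary is continuous. It is equivariant with respect to the restriction homomorphism $\G(U)\to\G(V)$. The energy density is pointwise nonnegative and the metric on $V$ is the restricted one, so
\[
 \En^V_{\eps}(u|_V,A|_V)\le\En^U_{\eps}(u,A).
\]

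The first real point is that $\rho$ must land in $\Conf^*(V)$, i.e.\ $u|_V\not\equiv0$. Here I would use the zero barrier \eqref{c:eq:zero-barrier} on $V$. If $u|_V\equiv0$, then
\[
 \En^U_\eps(u,A)\ge\En^V_\eps(0,A|_V)\ge\Vol(V)/(8\pi\eps^2).
\]
So take $(K,F)\in\mathscr S_k(U)$ with
\[
 \max_K\En^U_\eps\circ F<w_{\eps,k}(U)+\delta.
\]
We may assume $w_{\eps,k}(U)<w_{\eps,k}(V)$, since otherwise there is nothing to prove. By Lemma~\ref{c:lem:elementary}, this gives $w_{\eps,k}(U)<\Vol(V)/(8\pi\eps^2)$. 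Choosing $\delta$ small, every section in the family then restricts to a nonzero section on $V$. Hence $\rho$ induces a continuous map
\[
 \bar\rho:\B^*_{<\Vol(V)/8\pi\eps^2}(U)\to\B^*(V).
\]
Continuity of $\bar\rho$ in the quotient topology follows from continuity and equivariance of $\rho$.

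Next I would check the cohomological condition. Fix $x\in V$. The evaluation line $\LL_x$ on $\B^*(V)$ pulls back under $\bar\rho$ to $\LL_x$ on $\B^*(U)$. The isomorphism is $[(u,A),z]\mapsto[(u|_V,A|_V),z]$, which is well defined because $\gamma|_V(x)=\gamma(x)$. Since $\mu_U$ does not depend on the base point, this gives $\bar\rho^*\mu_V=\mu_U$. Then $(\bar\rho\circ F)^*\mu_V^k=F^*\mu_U^k\ne0$, so $(K,\bar\rho\circ F)\in\mathscr S_k(V)$. Its maximum energy is at most $w_{\eps,k}(U)+\delta$. This gives $w_{\eps,k}(V)\le w_{\eps,k}(U)+\delta$, and letting $\delta\to0$ proves \eqref{c:eq:restriction-finite}. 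Taking $\liminf_{\eps\downarrow0}$ gives $W_k(V)\le W_k(U)$.

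The main obstacle is the nonvanishing of the restricted section. A section in $\Conf^*(U)$ could vanish identically on $V$, so $\rho$ does not map all of $\B^*(U)$ into $\B^*(V)$. The energy barrier above handles this. I would still have to check carefully the case analysis: if $w_{\eps,k}(U)\ge\Vol(V)/(8\pi\eps^2)$, then the inequality already follows from the upper bound \eqref{c:eq:raw-upper-barrier} for $V$. A minor point is that $V$ must be an admissible domain, meaning connected with smooth closure, so that the definitions apply to it.
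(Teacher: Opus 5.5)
Your proposal is correct and follows essentially the same route as the paper. You pick a near-optimal $k$-sweepout on $U$, and you use the zero-section barrier \eqref{c:eq:zero-barrier} together with $w_{\eps,k}(V)\le\Vol(V)/(8\pi\eps^2)$ to guarantee $u|_V\not\equiv0$. You then identify the evaluation classes through an evaluation point in $V$, compare energies, and let $\delta\downarrow0$. The only cosmetic difference is that you factor the restriction through the open sublevel $\{\En^U_\eps<\Vol(V)/(8\pi\eps^2)\}\subset\B^*(U)$, on which $\bar\rho^*\mu_V$ is the restriction of $\mu_U$, whereas the paper restricts the family $F$ directly.
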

\begin{proof}[\bf Proof of Lemma \ref{c:lem:restriction}]
Put $b_V=\Vol(V)/(8\pi\eps^2)$.
By Lemma~\ref{c:lem:elementary}, we have
$w_{\eps,k}(V)\le b_V$.
If $w_{\eps,k}(U)\ge b_V$, the conclusion follows immediately.
Suppose that $w_{\eps,k}(U)<b_V$, and fix
$0<\delta<b_V-w_{\eps,k}(U)$.
Choose an AH $k$-sweepout $F:K\to\B^*(U)$ whose maximum
energy $\Lambda$ satisfies
\[
 \Lambda<w_{\eps,k}(U)+\delta<b_V.
\]
For every representative $(u,A)$ of a point in $F(K)$, the
restriction $u|_V$ is not identically zero. Otherwise,
\eqref{c:eq:zero-barrier} would give
\[
 b_V\le\En^U_{\eps}(u,A)\le\Lambda<b_V,
\]
which is a contradiction.

Restriction therefore defines a continuous map
$F_V:K\to\B^*(V)$.
Choose an evaluation point in $V$. The evaluation characters
on $U$ and $V$ agree at this point, so
\[
 F_V^*\mu_V=F^*\mu_U.
\]
Thus $F_V$ is an AH $k$-sweepout. Since restriction does not
increase the energy,
\[
 w_{\eps,k}(V)
 \le\max_K\En^V_{\eps}\circ F_V
 \le\Lambda<w_{\eps,k}(U)+\delta.
\]
Letting $\delta\downarrow0$ proves
\eqref{c:eq:restriction-finite}.
Taking the lower limit as $\eps\downarrow0$ gives
$W_k(V)\le W_k(U)$.
\end{proof}

We record the elementary relative cup product property  used below explicitly
(cf.\cite[Section 3.2, pp.~209--210]{Hatcher}).

\begin{lemma}\label{lem:finite-cup-cover}
Let $K$ be a finite simplicial complex covered by subcomplexes
$K_1,\ldots,K_N$. Suppose that $\alpha_i\in H^{q_i}(K;\Z)$ and $\alpha_i|_{K_i}=0$. Then
\[
 \alpha_1\smile\cdots\smile\alpha_N=0.
\]
\end{lemma}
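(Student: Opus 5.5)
We use the long exact sequence of the pair $(K,K_i)$ to lift each class to relative cohomology, and then use the relative cup product. For each $i$, the exact sequence
\[
 H^{q_i}(K,K_i;\Z)\xrightarrow{\ j_i^*\ }H^{q_i}(K;\Z)\longrightarrow H^{q_i}(K_i;\Z)
\]
is exact at the middle term. Since $\alpha_i|_{K_i}=0$, we can choose $\beta_i\in H^{q_i}(K,K_i;\Z)$ with $j_i^*\beta_i=\alpha_i$.

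Next we invoke the relative cup product for subcomplexes,
\[
 H^{p}(K,A;\Z)\times H^{q}(K,B;\Z)\longrightarrow H^{p+q}(K,A\cup B;\Z).
\]
It is natural with respect to the maps $j^*$ to absolute cohomology. It is defined because $A$ and $B$ are subcomplexes, so $\{A,B\}$ is an excisive couple; in the simplicial setting one can use simplicial cochains vanishing on $A$, respectively $B$. Iterating $N-1$ times, we obtain
\[
 \beta_1\smile\cdots\smile\beta_N\in H^{q_1+\cdots+q_N}(K,K_1\cup\cdots\cup K_N;\Z)=H^{*}(K,K;\Z)=0 .
\]
By naturality,
\[
 \alpha_1\smile\cdots\smile\alpha_N=j^*(\beta_1\smile\cdots\smile\beta_N)=0,
\]
where $j^*:H^*(K,K)\to H^*(K)$.

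The only point needing care is the well-definedness of the relative cup product into $H^*(K,A\cup B)$. I would handle it concretely with simplicial cochains, as in \cite[Section 3.2]{Hatcher}. Represent $\beta_i$ by a simplicial cocycle $b_i$ on $K$ that vanishes on every simplex of $K_i$. The simplicial (Alexander--Whitney) cup product $b_1\smile\cdots\smile b_N$ evaluated on a simplex $\sigma$ is a product of values of the $b_i$ on faces of $\sigma$. If $\sigma\subset K_i$, then every face of $\sigma$ lies in $K_i$, so the factor $b_i$ vanishes there. Since every simplex lies in some $K_i$, the cochain $b_1\smile\cdots\smile b_N$ vanishes identically. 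Since $b_i$ represents $\alpha_i$ in absolute cohomology, the product of the classes is zero. This cochain-level argument avoids excision entirely, so I expect no real obstacle beyond the choice of the cochain representatives.
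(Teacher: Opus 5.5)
Your proposal is correct and follows essentially the same route as the paper: you lift each $\alpha_i$ to $H^{q_i}(K,K_i;\Z)$ via the exact sequence of the pair, represent the lifts by simplicial cocycles vanishing on $K_i$, and observe that on any simplex some factor of the cup product is evaluated on a face lying in the corresponding $K_i$, so the product cochain vanishes. The paper also states explicitly two details you leave implicit: a vertex ordering is fixed to define the simplicial cup product, and the comparison between simplicial and singular cohomology preserves cup products.
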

\begin{proof}[\bf Proof of Lemma \ref{lem:finite-cup-cover}]
By the exact sequence of the pair $(K,K_i)$, we can lift $\alpha_i$ to
$H^{q_i}(K,K_i;\Z)$. We use simplicial cochains to calculate the product. The
canonical comparison with singular cohomology preserves cup products
\cite[Sections 3.1--3.2]{Hatcher}. Choose a representing cocycle $a_i$ which vanishes on
$K_i$, and order the vertices of $K$. In the cup product on a
simplex, each $a_i$ is evaluated on a face of that simplex. Every
simplex of $K$ belongs to some $K_i$, because these
subcomplexes cover $K$. The face also belongs to
$K_i$. Therefore the $i$th factor is zero. This proves that
the product cochain vanishes on every simplex. Equivalently, the
relative product lies in $H^{\sum_iq_i}(K,\bigcup_iK_i;\Z)=0$.
\end{proof}

\begin{lemma}\label{c:prop:fixed-local}
Let $U_1,\ldots,U_N$ be pairwise disjoint connected admissible subdomains of
$U$, with the restricted metric. For $k_i\ge1$ and $k=\sum_i k_i$,
we have
\begin{equation}\label{c:eq:fixed-local}
 W_k(U)\ge\sum_i W_{k_i}(U_i).
\end{equation}
More precisely, for every $\eps>0$,
\begin{equation}\label{c:eq:finite-local}
 w_{\eps,k}(U)\ge\sum_iw_{\eps,k_i}(U_i).
\end{equation}
\end{lemma}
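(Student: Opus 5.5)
We prove the finite-scale inequality \eqref{c:eq:finite-local}; taking the lower limit as $\eps\downarrow0$ and using superadditivity of $\liminf$ then gives \eqref{c:eq:fixed-local}. We argue by contradiction with the Lusternik--Schnirelmann covering argument. Suppose that $F:K\to\B^*(U)$ is an AH $k$-sweepout with
\[
 \Lambda:=\max_K\En^U_\eps\circ F<\sum_iw_{\eps,k_i}(U_i).
\]
Since the $U_i$ are disjoint, every configuration satisfies $\sum_i\En^{U_i}_\eps(F(z))\le\En^U_\eps(F(z))\le\Lambda$. Hence for each $z\in K$ there is some index $i$ with $\En^{U_i}_\eps(F(z))<w_{\eps,k_i}(U_i)$; otherwise summing gives $\Lambda\ge\sum_iw_{\eps,k_i}(U_i)$.

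\textbf{Constructing the cover.} Set $K_i^{\circ}=\{z:\En^{U_i}_\eps(F(z))<w_{\eps,k_i}(U_i)\}$. These sets are open, because restriction of energy is continuous in the smooth topology, and they cover $K$. After a sufficiently fine barycentric subdivision of $K$, which does not change $H^*(K;\Z)$ or the sweepout condition, we let $K_i$ be the subcomplex consisting of all closed simplices contained in $K_i^{\circ}$. By the Lebesgue number lemma, the $K_i$ cover $K$. Each $K_i$ is a finite simplicial complex, possibly empty; empty ones impose nothing.

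\textbf{Vanishing on each piece.} We claim $(F^*\mu_U)^{k_i}|_{K_i}=0$. On $K_i$, restriction gives a map $F_i:K_i\to\B^*(U_i)$. For this we need $u|_{U_i}\not\equiv0$. This holds because $w_{\eps,k_i}(U_i)\le\Vol(U_i)/(8\pi\eps^2)$ by Lemma~\ref{c:lem:elementary}, while a zero section on $U_i$ has energy at least this value by \eqref{c:eq:zero-barrier}. Our strict inequality excludes that case, exactly as in Lemma~\ref{c:lem:restriction}. Choosing the evaluation point in $U_i$ and using independence of $\mu$ from the point (note $U$ is connected), we get $F_i^*\mu_{U_i}=(F^*\mu_U)|_{K_i}$. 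If $(F_i^*\mu_{U_i})^{k_i}\ne0$, then $(K_i,F_i)$ would be an AH $k_i$-sweepout with maximum energy $<w_{\eps,k_i}(U_i)$, contradicting the definition. A minor point is that the maximum over the compact $K_i$ is still strictly below the threshold, which holds by compactness.

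\textbf{Conclusion.} Apply Lemma~\ref{lem:finite-cup-cover} with $\alpha_i=(F^*\mu_U)^{k_i}$. This gives $(F^*\mu_U)^k=\prod_i\alpha_i=0$, contradicting the sweepout condition. Hence every $k$-sweepout has maximum energy at least $\sum_iw_{\eps,k_i}(U_i)$, and taking the infimum proves \eqref{c:eq:finite-local}.

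The main technical obstacle is the first step of the vanishing argument: guaranteeing that restriction lands in $\B^*(U_i)$, i.e.\ that $u|_{U_i}\not\equiv0$. This is handled by the zero-section barrier. The other delicate point is the subdivision needed to turn the open cover into subcomplexes. Everything else is formal.
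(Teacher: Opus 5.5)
Your proof is correct and follows the paper's argument: an open cover of $K$ by the sets where the energy on $U_i$ is below $w_{\eps,k_i}(U_i)$, the zero-section barrier so that restriction lands in $\B^*(U_i)$, subdivision into subcomplexes, vanishing of $(F^*\mu_U)^{k_i}$ on each piece, and the relative cup-product lemma. The only cosmetic difference is in the thresholds: the paper lowers them to $L_i<w_{\eps,k_i}(U_i)$ with $\sum_i L_i>\Lambda$, while you use $w_{\eps,k_i}(U_i)$ itself and get the strict maximum bound on each $K_i$ from compactness, which is equally valid.
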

\begin{proof}[\bf Proof of Lemma \ref{c:prop:fixed-local}]
Fix $\eps>0$, and put $a_i=w_{\eps,k_i}(U_i)$. By
Lemma~\ref{c:lem:elementary},
\[
 0\le a_i\le\frac{\Vol(U_i)}{8\pi\eps^2}<\infty.
\]
Suppose that there is an AH $k$-sweepout $F:K\to\B^*(U)$ whose
maximum energy $\Lambda$ satisfies $\Lambda<\sum_i a_i$. Choose
\[
 \Lambda<L<\sum_i a_i,
 \qquad
 L_i=a_i-\frac{\sum_j a_j-L}{N}.
\]
Then $L_i<a_i$ and $\sum_iL_i=L>\Lambda$. Some $L_i$ may be
nonpositive.

For each $z\in K$, choose any representative $(u,A)$ of $F(z)$ and set
\[
 e_i(z)=\En^{U_i}_\eps(u|_{U_i},A|_{U_i}).
\]
Here the right side is the integral of the AH energy density over $U_i$.
It is defined even when $u|_{U_i}\equiv0$. It is gauge invariant and
continuous in $z$, so no continuous choice of representatives is needed.
Since the subdomains are disjoint and the energy density is nonnegative,
\[
 \sum_i e_i(z)\le\En^U_\eps(F(z))\le\Lambda
 \qquad(z\in K).
\]
Define the open subsets
\[
 \mathscr O_i=\{z\in K:e_i(z)<L_i\}.
\]
These sets cover $K$. Otherwise, at a point outside their union we would
have $\sum_i e_i(z)\ge\sum_iL_i=L>\Lambda$, a contradiction.
If $L_i\le0$, then $\mathscr O_i$ is empty.

We now define the restriction only on $\mathscr O_i$. For $z\in\mathscr O_i$,
\[
 e_i(z)<L_i<a_i\le\frac{\Vol(U_i)}{8\pi\eps^2}.
\]
The zero-section estimate \eqref{c:eq:zero-barrier} therefore implies
$u|_{U_i}\not\equiv0$. Thus restriction gives a continuous map
\[
 F_i:\mathscr O_i\longrightarrow\B^*(U_i).
\]
Put $\lambda=F^*\mu_U\in H^2(K;\Z)$. By the naturality of the
evaluation line under restriction,
\[
 F_i^*\mu_{U_i}=\lambda|_{\mathscr O_i}.
\]
Indeed, for each $i$ we choose an evaluation point in $U_i$ and use the
same point on both sides. The class $\mu_U$ does not depend on that choice.
No restriction map on all of $K$ is required.

We subdivide $K$ so that each closed simplex lies in one member of the
open cover $\{\mathscr O_i\}$. Assign each simplex to one such member,
and let $K_i$ be the subcomplex generated by the simplices assigned to
$\mathscr O_i$. Then
\[
 K=\bigcup_iK_i,\qquad K_i\subset\mathscr O_i.
\]
Each nonempty $K_i$ is compact, and hence
$\max_{K_i}e_i<L_i<a_i$. If $\lambda^{k_i}|_{K_i}\ne0$, then
$F_i|_{K_i}$ would be an AH $k_i$-sweepout on $U_i$ with maximum
energy strictly less than $w_{\eps,k_i}(U_i)=a_i$. This contradicts
the definition of the local min--max value. Therefore
$\lambda^{k_i}|_{K_i}=0$ for every $i$, including the case
$K_i=\varnothing$.

We apply Lemma~\ref{lem:finite-cup-cover} with $\alpha_i=\lambda^{k_i}$
and obtain
\[
 F^*(\mu_U^k)
 =\lambda^{k_1}\smile\cdots\smile\lambda^{k_N}
 =0.
\]
This contradicts the sweepout condition. Thus every AH $k$-sweepout
has maximum energy at least $\sum_i a_i$, which proves
\eqref{c:eq:finite-local}.

We keep the finite collection of domains and all indices fixed and let
$\eps\downarrow0$. The finite-scale inequality gives
\[
 \begin{aligned}
 W_k(U)
 &\ge\liminf_{\eps\downarrow0}\sum_iw_{\eps,k_i}(U_i)\\
 &\ge\sum_i\liminf_{\eps\downarrow0}w_{\eps,k_i}(U_i)
 =\sum_iW_{k_i}(U_i).
 \end{aligned}
\]
This proves \eqref{c:eq:fixed-local}, including the case in which one of
the limiting values is infinite.
\end{proof}

The following inequality compares the capacity of one connected 
manifold with the capacities of finitely many disjoint connected open
subsets. It does not require a capacity to be assigned to their
disjoint union.

\begin{lemma}\label{c:prop:optimized-local}
Let $U_1,\ldots,U_N$ be pairwise disjoint connected open subsets of a connected
symplectic four-manifold $U$. Then
\[
 \cAH_{\sum_i k_i}(U)\ge\sum_i\cAH_{k_i}(U_i),\qquad k_i\ge1.
\]
\end{lemma}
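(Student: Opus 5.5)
The plan is to reduce Lemma~\ref{c:prop:optimized-local} to the fixed-metric localization of Lemma~\ref{c:prop:fixed-local}. The idea is to choose near-optimal domains and compatible structures inside each $U_i$, then glue the structures into a single compatible structure on a neighborhood of a connected domain in $U$ that contains all of them.

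Fix $k_i\ge1$ and real numbers $t_i<\cAH_{k_i}(U_i)$; if some capacity is infinite, take $t_i$ arbitrarily large. By the definition \eqref{eq:family-capacity}, choose connected smooth domains $D_i\Subset U_i$ and compatible $J_i\in\mathcal J_\omega(\overline D_i)$ with $W_{k_i}(D_i,g_{J_i})>t_i$. If $U_i$ is closed and the supremum is attained only with $D_i=U_i$, then $U_i$ is a closed component of $U$; connectedness of $U$ then forces $N=1$ and $U_1=U$, which is trivial. So we may assume each $\overline D_i$ is compact in the open set $U_i$.

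Since the $U_i$ are pairwise disjoint, the sets $\overline D_i$ are pairwise disjoint compact subsets of $U$. Because $U$ is connected, join the $D_i$ by thin embedded arcs in $U$ that avoid the other $\overline D_j$, and thicken $\bigcup_i\overline D_i\cup\{\text{arcs}\}$. After smoothing corners, this gives a connected smooth domain $D\Subset\operatorname{Int}U$ with $D_i\subset D$. If $U$ is closed, we may instead take $D=U$.

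Next, construct a compatible $J$ near $\overline D$ that agrees with $J_i$ on a neighborhood of $\overline D_i$. The key input is that the space of $\omega$-compatible almost complex structures on a symplectic vector space is contractible and nonempty, so sections of the associated bundle extend from closed subsets. Concretely, fix neighborhoods $N_i$ of $\overline D_i$ with disjoint closures on which $J_i$ is defined. Using a polar-decomposition retraction from metrics to compatible structures, extend the metric $g_{J_i}$ glued with any background metric by a partition of unity to get $J$. Away from the $N_i$ we can take any compatible structure, and near $\overline D_i$ the structure $J_i$ is preserved because the partition function equals one there. Then $g_J$ restricts to $g_{J_i}$ on $D_i$, and $D_1,\dots,D_N$ are pairwise disjoint connected admissible subdomains of $D$ carrying the restricted metric.

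Lemma~\ref{c:prop:fixed-local} now gives
\[
 W_{\sum k_i}(D,g_J)\ge\sum_iW_{k_i}(D_i,g_{J_i})>\sum_i t_i .
\]
Hence $\cAH_{\sum k_i}(U)\ge\sum_i t_i$. Letting $t_i\uparrow\cAH_{k_i}(U_i)$ yields the claim, including the case where some capacity is infinite.

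The main obstacle is the gluing step. We must ensure that $D$ can be chosen connected, smooth, and relatively compact, and that the extension of $J$ exactly preserves each $J_i$ on a neighborhood of $\overline D_i$, since $W_{k_i}$ depends on the metric on all of $D_i$. The extension is standard by contractibility of the fiber, and the thickened-arc construction handles connectedness.
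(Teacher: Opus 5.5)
Your proposal is correct and follows the same route as the paper. You choose near-optimal $(D_i,J_i)$, join the $D_i$ by arcs inside a single connected smooth $D\Subset U$, extend to a compatible $J$ equal to $J_i$ near each $\overline{D_i}$, apply Lemma~\ref{c:prop:fixed-local} to the single metric $g_J$, and let the lower bounds increase, which also covers infinite values. Your polar-decomposition gluing and your remark that a closed $U_i$ forces $N=1$ and $U_1=U$ only fill in details that the paper leaves implicit.
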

\begin{proof}[\bf Proof of Lemma \ref{c:prop:optimized-local}]
Fix smooth inner domains $D_i\Subset U_i$ and compatible structures $J_i$
near their closures. We shrink the neighborhoods so that their closures
are disjoint. Since $U$ is connected, we can join the $D_i$ by
finitely many compact paths and take a connected smooth $D\Subset U$
containing this set. By extending compatible
almost complex structures, there exists a compatible $J$ near
$\overline D$ which is equal to $J_i$ near each $\overline{D_i}$.
Thus $g_J|_{D_i}=g_{J_i}$.

We apply fixed-metric localization to this one metric and obtain
\[
 \cAH_{\sum_i k_i}(U)\ge W_{\sum_i k_i}(D,g_J)
 \ge\sum_iW_{k_i}(D_i,g_{J_i}).
\]
If the local suprema are finite, we choose the data so that each term
on the right is greater than $\cAH_{k_i}(U_i)-\delta/N$, and then let $\delta\downarrow0$. If a
local supremum is infinite, we first choose an arbitrary finite lower
bound and then let it increase. 
\end{proof}

We record the exhaustion property of the definition
\eqref{eq:family-capacity}. 

\begin{lemma}\label{lem:connected-exhaustion}
Let $U\subset\R^4$ be a nonempty connected open set. For every fixed
$k\ge1$,
\[
 \cAH_k(U,\omega_0)
 =\sup_{\substack{D\Subset U,\ D\text{ connected}\\
                  \overline D\text{ compact with smooth boundary}}}
       \cAH_k(D,\omega_0).
\]
Moreover, if $D_1\Subset D_2\Subset\cdots$ is an exhaustion
of $U$ by bounded connected smooth domains, then
\begin{equation}\label{eq:open-exhaustion}
 \cAH_k(U,\omega_0)=\sup_j\cAH_k(D_j,\omega_0)
                  =\lim_{j\to\infty}\cAH_k(D_j,\omega_0).
\end{equation}
\end{lemma}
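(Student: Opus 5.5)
The argument compares the two suprema directly from definition \eqref{eq:family-capacity}. Both sides are suprema of $W_k(D',g_J)$ over pairs $(D',J)$. The only issue is which inner domains are allowed: those of $U$ on the left, and those of the smooth domains $D\Subset U$ on the right.

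\emph{Step 1 (the inequality $\ge$).} Let $D\Subset U$ be a bounded connected smooth domain. Every admissible inner domain $D'\Subset D$ for $\cAH_k(D,\omega_0)$ is also compactly contained in $U$. A compatible $J$ near $\overline{D'}$ is admissible in both definitions, and $g_J$ is the same metric. Hence each term $W_k(D',g_J)$ in the supremum for $D$ also appears in the supremum for $U$. This gives $\cAH_k(D)\le\cAH_k(U)$; alternatively, it follows from monotonicity (C1) applied to the inclusion. Taking the supremum over $D$ gives one inequality.

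\emph{Step 2 (the inequality $\le$).} Fix an admissible $D'\Subset U$ and a compatible $J$ near $\overline{D'}$. We construct a connected smooth bounded domain $D$ with
\[
 \overline{D'}\subset D,\qquad \overline D\subset U.
\]
Such a $D$ exists: cover the compact set $\overline{D'}$ by finitely many small balls inside $U$, and take a smooth sublevel set of a smoothed distance function to $\overline{D'}$ at a small level. If this set is disconnected, keep the component containing the connected set $\overline{D'}$. Then $D'\Subset D$ and $J$ is defined near $\overline{D'}$, so $(D',J)$ is admissible for $\cAH_k(D,\omega_0)$. Therefore
\[
 W_k(D',g_J)\le\cAH_k(D)\le\sup_D\cAH_k(D).
\]
Taking the supremum over $(D',J)$ proves the first equality. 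Note that this uses the restriction $D'\Subset\operatorname{Int}M$ in the definition: since $\overline{D'}$ is compact in the open set $U$, it has a thickening whose closure still lies in $U$.

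\emph{Step 3 (exhaustions).} Let $D_1\Subset D_2\Subset\cdots$ exhaust $U$. By Step 1, the sequence $\cAH_k(D_j)$ is nondecreasing and bounded by $\cAH_k(U)$, so the limit equals the supremum. For the reverse inequality, take any admissible $(D',J)$ for $U$. The compact set $\overline{D'}$ is covered by the increasing open sets $D_j$, so $\overline{D'}\subset D_j$ for some $j$, that is, $D'\Subset D_j$. Then $(D',J)$ is admissible for $D_j$, which gives $W_k(D',g_J)\le\cAH_k(D_j)$. Taking the supremum yields \eqref{eq:open-exhaustion}.

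The only mildly delicate point is the construction of the connected smooth thickening in Step 2; everything else is bookkeeping from the definition. The value $+\infty$ is allowed throughout, and no step needs finiteness.
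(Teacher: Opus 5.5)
Your proof is correct and follows essentially the same route as the paper. You use monotonicity (or the inclusion of admissible pairs) for $\ge$, a connected smooth thickening $\overline{D'}\subset D\Subset U$ for $\le$, and compactness of $\overline{D'}$ to place it inside some $D_j$ of the exhaustion, with the limit equal to the supremum because the sequence is nondecreasing. The only addition is that you sketch how to construct the thickening, which the paper simply asserts.
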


\begin{proof}[\bf Proof of Lemma \ref{lem:connected-exhaustion}]
Monotonicity gives $\cAH_k(D,\omega_0)\le\cAH_k(U,\omega_0)$
for every domain $D$ in the first supremum.
For the reverse inequality, fix a connected smooth domain
$V\Subset U$ and a compatible structure $J$ near $\overline V$.
Choose a connected smooth domain $D$ such that
$\overline V\subset D\Subset U$.
The definition of the capacity gives
\[
 W_k(V,g_J)\le\cAH_k(D,\omega_0).
\]
Taking the supremum over $V$ and $J$ proves the first equality.

For the given exhaustion, compactness of $\overline V$ and the
inclusions $D_j\subset D_{j+1}$ imply that
$\overline V\subset D_j$ for some $j$.
Hence
\[
 W_k(V,g_J)\le\cAH_k(D_j,\omega_0)
 \le\sup_j\cAH_k(D_j,\omega_0).
\]
Taking the same suprema gives
$\cAH_k(U,\omega_0)\le\sup_j\cAH_k(D_j,\omega_0)$.
The opposite inequality follows from monotonicity.
The sequence $\cAH_k(D_j,\omega_0)$ is nondecreasing, so its
limit equals its supremum, also when this value is infinite.
\end{proof}

\section{The Seiberg-Witten equations and the Abelian-Higgs energy}\label{m:sec-comparison}
In this section, we use the AH energy \eqref{eq:unitary-AH-energy} on a possibly
nontrivial Hermitian line bundle. We follow the notation and normalization in
\cite{Taubes1996,Taubes2000}.  
We use Taubes's estimates to bound the AH energy of the scalar component.

\subsection{The perturbed Seiberg--Witten equations}
Let $(X,\omega)$ be a closed symplectic four-manifold. Fix an
$\omega$-compatible almost complex structure $J$, and put
$g=\omega(\cdot,J\cdot)$. We use the symplectic orientation. Then
$*_g\omega=\omega$, $|\omega|_g^2=2$, and $dV_g=\omega^2/2$.
 Let $K$ be the canonical line
bundle of $(X,J)$. Let $E\to X$ be a Hermitian line bundle such that
$c_1(E)\cdot[\omega]>0$. A dot denotes the intersection pairing on $X$.

We write $S_+$ and $S_-$ for the positive and negative spinor bundles.
The positive spinor bundle and its determinant line are
\[
 S_+=E\oplus(K^{-1}\otimes E),\qquad
 L=\det(S_+)=K^{-1}\otimes E^2.
\]
Following \cite[Section 1(a), equations (1.4)--(1.6)]{Taubes1996}, we denote
Clifford multiplication by $c$ and its induced maps by
\[
 \sigma:S_+\otimes T^*X\longrightarrow S_-,\qquad
 c_+:\Lambda_+\longrightarrow\operatorname{End}(S_+),\qquad
 \tau:\operatorname{End}(S_+)\longrightarrow\Lambda_+\otimes\C.
\]
Here $\Lambda_+$ is the bundle of self-dual two-forms. The map $\tau$ is
normalized as in Taubes's equation\cite[(1.5)]{Taubes1996}. In an oriented orthonormal
coframe $(e^1,\ldots,e^4)$, it satisfies
\[
 \tau(\eta\otimes\eta^*)
 =-\frac12\sum_{j,k=1}^4
   \langle\eta,c(e^j)c(e^k)\eta\rangle\,e^j\wedge e^k.
\]
We take Hermitian inner products to be conjugate-linear in the first entry.
In particular, $\tau(\eta\otimes\eta^*)$ is imaginary-valued.
We write
\[
 P_+=\frac12(1+*_g),\qquad P_-=\frac12(1-*_g)
\]
for the projections on self-dual and anti-self-dual two-forms.

Let $A_0$ be Taubes's canonical connection on $K^{-1}$. Let $u_0$ be the
unit section of the trivial summand of the canonical positive spinor
bundle $\C\oplus K^{-1}$. The projection of $\nabla_{A_0}u_0$ to that
trivial summand is zero, and
\[
 D_{A_0}u_0=0.
\]
These are the conventions in \cite[Section 1(c), equations (1.12)--(1.13)]{Taubes1996}.

Let $a$ be a unitary connection on $E$. The corresponding connection on
$L$ is denoted by $A$. Thus
\[
 A=A_0+2a,\qquad F_A=F_{A_0}+2F_a,\qquad
 F_a=\nabla_a^2\in i\Omega^2(X),\qquad
 \left[\frac{iF_a}{2\pi}\right]=c_1(E).
\]
The first equality denotes the tensor product connection. A gauge
transformation $\gamma:X\to S^1$ acts by
\[
 (A,\psi)\longmapsto(A-2\gamma^{-1}d\gamma,\gamma\psi),
 \qquad a\longmapsto a-\gamma^{-1}d\gamma.
\]
These are the gauge conventions in \cite[Section 1(b)]{Taubes1996}.
The connection $A$ and the Levi--Civita connection induce the spinor derivative
$\nabla_A$. We write $D_A=\sigma\circ\nabla_A$. Thus $A$ is a connection
on the determinant line, and $a$ is the connection used in the AH energy.
For general spin$^c$ connections, we also refer to
\cite[Section 2(i)--(ii)]{KronheimerMrowka1997}. We also write $D_A$ for the full Dirac operator on
$S_+\oplus S_-$. It is formally self-adjoint.

If $u$ is a section of $E$, we write $u u_0$ for its image in the first
summand of $S_+$. By \cite[equation (1.15)]{Taubes1996},
\[
 \nabla_A(u u_0)=(\nabla_a u)u_0+u\nabla_{A_0}u_0,
 \qquad D_A(u u_0)=\sigma(u_0\otimes\nabla_a u).
\]
For a section $\beta$ of $K^{-1}\otimes E$, we use Taubes's projected
covariant derivative
\[
 \nabla'_A\beta
 =\frac12\left(1-\frac{i}{2}c_+(\omega)\right)\nabla_A\beta.
\]
See \cite[(1.16)]{Taubes1996}. Clifford multiplication by
$\omega$ has eigenvalues $-2i$ and $2i$ on the two summands of $S_+$.

We continue to write $\langle iF_a,\omega\rangle_g$ for the pointwise
inner product of real two-forms. In an orthonormal frame
$(e_1,Je_1,e_2,Je_2)$, we have
\[
 \langle iF_a,\omega\rangle_g
 =iF_a(e_1,Je_1)+iF_a(e_2,Je_2),\qquad
 \langle iF_a,\omega\rangle_g\,dV_g=iF_a\wedge\omega.
\]
Since $|\omega|_g^2=2$, the projection of $iF_a$ onto $\R\omega$ is
$\tfrac12\langle iF_a,\omega\rangle_g\,\omega$.

On a unitary trivializing chart, we keep the AH notation from
\eqref{eq:SW-AH-notation}:
\[
 \nabla_a=d-iA_a,\qquad F_a=-i\,dA_a,\qquad
 \En^D_{\eps,g}(u,a)=\En^D_{\eps,g}(u,A_a).
\]
Here $A_a$ is a real one-form. It is not the determinant connection $A$.
The real one-form denoted by $A$ in the earlier AH sections is denoted by
$A_a$ when it is obtained from the SW connection $a$.

We use precisely the perturbation in
\cite[Section 1(d), equations (1.17)--(1.18)]{Taubes1996}. For $r>0$, put
\[
 \mu_r=-\frac{ir}{4}\omega+P_+F_{A_0}.
\]
The perturbed SW equations for $(A,\psi)$ are
\[
 D_A\psi=0,\qquad
 P_+F_A=P_+F_{A_0}+\frac14\bigl(\tau(\psi\otimes\psi^*)-ir\omega\bigr).
\]

As in Taubes's equation \cite[(1.19)]{Taubes1996}, we write
\[
 \psi=r^{1/2}(\alpha u_0+\beta),\qquad
 \alpha\in\Gamma(E),\quad \beta\in\Gamma(K^{-1}\otimes E).
\]
By Taubes's equation (1.20), the equations are equivalent to
\begin{align}
 \sigma(u_0\otimes\nabla_a\alpha)+D_A\beta&=0,
       \label{m:SW-dirac}\\
 P_+F_a&=-\frac{ir}{8}(1-|\alpha|^2+|\beta|^2)\omega
       +\frac{ir}{4}(\alpha\beta^*+\alpha^*\beta).
       \label{m:SW-curvature}
\end{align}
The expressions $\alpha\beta^*$ and $\alpha^*\beta$ are sections of $K$
and $K^{-1}$. We regard them as the conjugate summands of
$\Lambda_+\otimes\C$, with the metric identifications used in
\cite[equation (1.20)]{Taubes1996}. They are orthogonal to $\omega$, and
\[
 |\alpha\beta^*+\alpha^*\beta|^2=2|\alpha|^2|\beta|^2.
\]
This normalization also gives the norm formula in
\cite[equation (2.12)]{Taubes1996}.
Taking the inner product of $iF_a$ with $\omega$ in
\eqref{m:SW-curvature}, we obtain
\begin{equation}\label{m:SW-moment}
 \langle iF_a,\omega\rangle_g
 =\frac r4(1-|\alpha|^2+|\beta|^2).
\end{equation}
We set
\begin{equation}\label{m:scale}
 \eps_r=\sqrt{\frac2r}.
\end{equation}
Then $1/(2\eps_r^2)=r/4$. This is the AH scale for Taubes's parameter
$r$ in \eqref{m:SW-curvature}. 

If $J$ is integrable and $\beta=0$, these are the K\"ahler vortex
equations, written using the Dirac operator and $P_+F_a$.
For the flat product model, let
$\omega=dx_1\wedge dx_2+dx_3\wedge dx_4$ and pull back the fields from
the $(x_1,x_2)$-plane. Then \eqref{m:SW-dirac}--\eqref{m:SW-curvature}
with $\beta=0$ reduce to
\[
 (\nabla_{a,1}+i\nabla_{a,2})\alpha=0,\qquad
 iF_a=\frac{r}{4}(1-|\alpha|^2)\,dx_1\wedge dx_2
     =\frac{1-|\alpha|^2}{2\eps_r^2}\,dx_1\wedge dx_2.
\]
Indeed, $P_+(dx_1\wedge dx_2)=\omega/2$. This agrees with the planar
vortex normalization in \eqref{eq:planar-vortex} and with
\cite[equation (4.4)]{Taubes1996} when $r=1$.

\subsection{Taubes's estimates}
All norms in this section are taken on $(X,g)$. We use Propositions 2.1,
2.3, and 2.8 and Lemma 2.6 of \cite{Taubes1996}. These estimates are
stated for the equations \eqref{m:SW-dirac}--\eqref{m:SW-curvature} with
the normalization above. We only integrate the stated estimates and
apply the Dirac equation.

\begin{lemma}\label{m:taubes-input}
There exist constants $C,r_0>0$, depending on $(X,\omega,J,E)$, such that
every solution of \eqref{m:SW-dirac}--\eqref{m:SW-curvature} with
$r\ge r_0$ satisfies
\begin{equation}\label{m:SW-estimate}
 \|D_A(\alpha u_0)\|_2^2
 +r\int_X|\alpha|^2|\beta|^2\,dV_g
 +r\|\beta\|_4^4\le\frac Cr.
\end{equation}
The constants are independent of the solution.
\end{lemma}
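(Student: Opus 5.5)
The plan is to read off the three terms of \eqref{m:SW-estimate} from Taubes's estimates in \cite[Section 2]{Taubes1996}, after translating to the present normalization $\psi=r^{1/2}(\alpha u_0+\beta)$. I would first invoke \cite[Proposition~2.1]{Taubes1996}, which gives the pointwise bound $|\alpha|^2\le 1+C/r$ for $r\ge r_0$, and \cite[Proposition~2.3]{Taubes1996}, which gives
\[
 |\beta|^2\le \frac{C}{r}\bigl(1-|\alpha|^2\bigr)+\frac{C}{r^2}
\]
pointwise. The constants depend only on $(X,\omega,J,E)$, and they are uniform over all solutions.

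For the middle term, I would insert the $\beta$ bound and the $\alpha$ bound:
\[
 r\int_X|\alpha|^2|\beta|^2\,dV_g
 \le C\int_X|1-|\alpha|^2|\,dV_g+\frac{C}{r}.
\]
The next task is to show that $\int_X|1-|\alpha|^2|\,dV_g\le C/r$. I would first use \eqref{m:SW-moment} integrated against $dV_g$, together with $\int iF_a\wedge\omega=2\pi c_1(E)\cdot[\omega]$. This gives
\[
 \frac r4\int_X\bigl(1-|\alpha|^2+|\beta|^2\bigr)\,dV_g=2\pi c_1(E)\cdot[\omega],
\]
so the signed integral of $1-|\alpha|^2$ is $O(1/r)$. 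The negative part of $1-|\alpha|^2$ is at most $C/r$ pointwise by Proposition~2.1, so its integral is also $O(1/r)$. Hence $\int|1-|\alpha|^2|\le C/r$. The last term is then immediate: $r\|\beta\|_4^4\le r\cdot\sup|\beta|^2\cdot\|\beta\|_2^2$, and both factors are controlled by the same two propositions. More directly, squaring the pointwise $\beta$ bound and using $|1-|\alpha|^2|\le C$ gives $r\int|\beta|^4\le (C/r)\int|1-|\alpha|^2|+C/r^3\le C/r^2$.

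For the Dirac term, I would use \eqref{m:SW-dirac}: $D_A(\alpha u_0)=-D_A\beta$. So it suffices to show $\|D_A\beta\|_2^2\le C/r$. For this I would use \cite[Lemma~2.6 and Proposition~2.8]{Taubes1996}, which bound $\int|\nabla_A'\beta|^2$ and $\int|\nabla_A\beta|^2$. Alternatively, the Weitzenb\"ock integral identity obtained by pairing the Dirac equation with $\beta$ gives
\[
 \int|\nabla_A\beta|^2+\frac r4\int|\beta|^2(1+|\alpha|^2+\cdots)\le C\int|\beta|^2+C\int|\nabla_a\alpha||\beta|.
\]
Taubes's estimate $\int|\nabla_a\alpha|^2\le C$ then lets me absorb the cross term, and $\int|\beta|^2\le C/r^2$ closes the bound. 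Since $|D_A\beta|\le C|\nabla_A\beta|$, this gives the first term.

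I expect the main obstacle to be the Dirac term, because the bound needs $r^{-1}$ and not just $O(1)$. The crude absorption above gives only $\int|\nabla_A\beta|^2\lesssim C/r$ if the cross term is handled carefully with Taubes's refined estimate \cite[Proposition~2.8]{Taubes1996}. The main risk is whether that refined estimate is stated with exactly this power of $r$ under the present normalization $\psi=r^{1/2}(\alpha u_0+\beta)$, so I would check Taubes's scaling conventions at that step first.
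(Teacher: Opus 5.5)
Your proposal is correct and follows essentially the paper's route. Taubes's pointwise bounds give $\|\beta\|_\infty^2\le C/r$ and $\|\beta\|_2^2\le C/r^2$, and the two quartic terms follow at once; your derivation of $\int_X|1-|\alpha|^2|\,dV_g\le C/r$ from the integrated identity \eqref{m:SW-moment} together with $|\alpha|^2\le 1+C/r$ is a fine self-contained substitute for citing Taubes's Lemma~2.6.

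The Dirac term is not a real obstacle, and no rescaling is needed, since the normalization is already Taubes's. Integrating his estimate (2.37) gives $\|\nabla_a\alpha\|_2^2+r\|\nabla'_A\beta\|_2^2\le C$. The remaining part of $\nabla_A\beta$ is a fixed bundle map applied to $\beta$, a step you elided. Hence $\|D_A(\alpha u_0)\|_2^2=\|D_A\beta\|_2^2\le C(\|\nabla'_A\beta\|_2^2+\|\beta\|_2^2)\le C/r$, which is exactly the paper's step. Your Weitzenb\"ock variant also closes directly: by Cauchy--Schwarz the cross term is at most $\|\nabla_a\alpha\|_2\|\beta\|_2\le C/r$, so no absorption is needed.
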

\begin{proof}[\bf Proof of Lemma \ref{m:taubes-input}]
 Propositions 2.1 and 2.3 and Lemma 2.6 of
\cite{Taubes1996} give, after increasing the constants,
\[
 \|\alpha\|_\infty\le C,\qquad
 |\beta|^2\le\frac Cr\left((1-|\alpha|^2)+\frac C{r^2}\right),\qquad
 \int_X|1-|\alpha|^2|\,dV_g\le\frac Cr.
\]
Since $1-|\alpha|^2\le1$, the second inequality gives
$\|\beta\|_\infty^2\le C/r$. Integrating it gives
$\|\beta\|_2^2\le C/r^2$. By Proposition 2.8, equation (2.37), of
\cite{Taubes1996},
\[
 |\nabla_a\alpha|^2+r|\nabla'_A\beta|^2
 \le C_1r(1-|\alpha|^2)+C_2.
\]
We integrate this inequality and use the bound for $\int_X|1-|\alpha|^2|$. Then
\[
 \|\nabla_a\alpha\|_2^2+r\|\nabla'_A\beta\|_2^2\le C.
\]
The part of $\nabla_A\beta$ perpendicular to $K^{-1}\otimes E$ is a
fixed bundle map applied to $\beta$. It is independent of $a$, since
twisting by $a$ acts by a scalar and preserves the splitting. Hence
\[
 \|D_A\beta\|_2^2
 \le C\bigl(\|\nabla'_A\beta\|_2^2+\|\beta\|_2^2\bigr)
 \le C/r.
\]
By \eqref{m:SW-dirac}, we have $D_A(\alpha u_0)=-D_A\beta$.
Also,
\[
 r\int_X|\alpha|^2|\beta|^2\,dV_g\le C/r,\qquad
 r\|\beta\|_4^4
 \le r\|\beta\|_\infty^2\|\beta\|_2^2\le C/r^2.
\]
These inequalities prove \eqref{m:SW-estimate}.
\end{proof}

\subsection{The AH energy of the scalar component}
We write the square completion using $D_A$ and the self-dual projection
$P_+$. 
The following identity is a calculation on the closed manifold $X$.

\begin{lemma}\label{m:basic-identities}
For every smooth section $u$ of $E$ and every unitary connection $a$,
with $A=A_0+2a$, we have
\begin{equation}\label{m:BPSidentity}
\begin{split}
 2\pi\En^X_{\eps,g}(u,a)
 ={}&2\pi c_1(E)\cdot[\omega]-4\pi^2\eps^2c_1(E)^2
       +\|D_A(u u_0)\|_2^2\\
    &+2\eps^2\left\|P_+F_a+
                 \frac{i(1-|u|^2)}{4\eps^2}\omega\right\|_2^2.
\end{split}
\end{equation}
Here $c_1(E)^2=\langle c_1(E)\smile c_1(E),[X]\rangle$.
\end{lemma}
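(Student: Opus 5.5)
We expand the two squares on the right side of \eqref{m:BPSidentity} and compare with $2\pi\En^X_{\eps,g}(u,a)=\int_X(|\nabla_au|^2+\eps^2|F_a|^2+(1-|u|^2)^2/(4\eps^2))\,dV_g$. We treat the curvature term and the Dirac term separately, and then check that all cross terms combine into the two topological quantities.

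\emph{Curvature part.} Write $|F_a|^2=|P_+F_a|^2+|P_-F_a|^2$. Since $iF_a$ is real, $\int_X(|P_+F_a|^2-|P_-F_a|^2)\,dV_g=\int_X iF_a\wedge iF_a=4\pi^2c_1(E)^2$. This uses $\int F\wedge F=\int(|P_+F|^2-|P_-F|^2)$ and $[iF_a/2\pi]=c_1(E)$. Hence
\[
 \eps^2\|F_a\|_2^2=2\eps^2\|P_+F_a\|_2^2-4\pi^2\eps^2c_1(E)^2 .
\]
Next expand
\[
 2\eps^2\Bigl\|P_+F_a+\tfrac{i(1-|u|^2)}{4\eps^2}\omega\Bigr\|_2^2 .
\]
Using $|\omega|^2=2$, the square term gives $\int(1-|u|^2)^2/(4\eps^2)$, which is exactly the potential. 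The cross term is $-\int(1-|u|^2)\langle iF_a,\omega\rangle\,dV_g$; the sign must be tracked with $F_a$ imaginary. Its piece $-\int\langle iF_a,\omega\rangle dV_g=-\int iF_a\wedge\omega=-2\pi c_1(E)\cdot[\omega]$ is topological, and the remaining piece $\int|u|^2\langle iF_a,\omega\rangle\,dV_g$ must be cancelled by the Dirac term.

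\emph{Dirac part.} This is the step I expect to be the main obstacle. We need the pointwise or integrated Weitzenb\"ock-type identity
\[
 \|D_A(uu_0)\|_2^2=\|\nabla_au\|_2^2-\int_X|u|^2\langle iF_a,\omega\rangle\,dV_g .
\]
Equivalently, this is the decomposition $|\nabla_a u|^2=2|\bar\partial_a u|^2+(\text{divergence})\pm|u|^2\langle iF_a,\omega\rangle$ familiar from the vortex Bogomolny trick. By the formula $D_A(uu_0)=\sigma(u_0\otimes\nabla_au)$ from \cite[(1.15)]{Taubes1996}, $D_A(uu_0)$ is Clifford multiplication of $u_0$ by $\nabla_au$. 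Only the $(0,1)$-part of $\nabla_au$ survives in a way detected by the norm, so $|D_A(uu_0)|^2=2|\bar\partial_au|^2$ pointwise up to Taubes's normalization. I would then compute $2|\bar\partial_au|^2-|\nabla_au|^2=-2\,\mathrm{Im}\sum_j\langle\nabla_{e_j}u,\nabla_{Je_j}u\rangle$ in a frame $(e_1,Je_1,e_2,Je_2)$. This expression is $d$ of the one-form $\mathrm{Im}\langle u,\nabla_a u\rangle$ wedged with $\omega$, plus the curvature term $|u|^2\,iF_a\wedge\omega$. Because $d\omega=0$ and $X$ is closed, Stokes kills the exact part even when $J$ is non-integrable, leaving exactly $-\int|u|^2\langle iF_a,\omega\rangle$. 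Concretely, $d\bigl(\mathrm{Im}\langle u,\nabla_au\rangle\bigr)=\mathrm{Im}\langle\nabla_au\wedge\nabla_au\rangle+|u|^2\,iF_a$ up to sign. Wedging with $\omega$ and using $\alpha\wedge\omega=\langle\alpha,\omega\rangle dV$ gives the claim. Alternatively, one can apply the Lichnerowicz formula for $D_A$ to $uu_0$, using Taubes's identities $\nabla_{A_0}u_0$ projected to zero and $D_{A_0}u_0=0$; here the scalar curvature and $F_{A_0}$ contributions cancel against the $\nabla u_0$ terms. I would do the direct frame computation and fix signs using the flat vortex model recorded above, where $\beta=0$ and the vortex equations must make both squares vanish.

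\emph{Assembly.} Adding the two parts, the terms $\pm\int|u|^2\langle iF_a,\omega\rangle$ cancel. What remains is the energy, minus the constants $-2\pi c_1(E)\cdot[\omega]$ and $-4\pi^2\eps^2c_1(E)^2$ moved to the other side. This is \eqref{m:BPSidentity}.
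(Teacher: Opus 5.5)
Your proposal is correct. The curvature part and the final assembly coincide with the paper's: the identity \eqref{m:form-identity} and the expansion of the square using $|\omega|_g^2=2$.

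The genuine difference is in the Dirac step, i.e.\ the proof of \eqref{m:scalar-identity}.

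The paper works at second order. It applies the Weitzenb\"ock formula on $S_+$ for both $A$ and $A_0$. It uses $D_{A_0}u_0=0$ to discard $uD_{A_0}^2u_0$, and $\langle\nabla_{A_0}u_0,u_0\rangle=0$ to discard the cross term. It then evaluates $\langle uu_0,c_+(P_+F_a)(uu_0)\rangle=-\langle iF_a,\omega\rangle_g|u|^2$ and integrates using self-adjointness of $D_A$.

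Your primary route is first order and pointwise, and it works:
\begin{itemize}
\item From $D_A(uu_0)=\sigma(u_0\otimes\nabla_au)$ and the Clifford relations you get $|D_A(uu_0)|^2=2|\bar\partial_au|^2=|\nabla_au|^2-\langle\Theta,\omega\rangle_g$, where $\Theta(v,w)=2\,\mathrm{Im}\langle\nabla_{a,v}u,\nabla_{a,w}u\rangle$.
\item In a local gauge $\nabla_a=d-iA_a$, a direct computation gives $d\lambda=\Theta-|u|^2\,iF_a$ for the global one-form $\lambda=\mathrm{Im}\langle u,\nabla_au\rangle$.
\item Since $d\lambda\wedge\omega=d(\lambda\wedge\omega)$, Stokes gives exactly \eqref{m:scalar-identity}.
\end{itemize}

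What each route buys:
\begin{itemize}
\item Your route avoids the scalar-curvature and $F_{A_0}$ bookkeeping.
\item It makes the role of $d\omega=0$ explicit. In the paper this is hidden in the existence of $A_0$ with $D_{A_0}u_0=0$; you still use that fact, but only through Taubes's formula (1.15).
\item It yields a pointwise identity modulo $d(\lambda\wedge\omega)$, so it would also give a version on domains with an explicit boundary term.
\item The paper's route is shorter once Taubes's Weitzenb\"ock conventions are taken as given.
\end{itemize}

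Finally, you do not need the vortex model to fix signs. The paper's convention $c_+(\omega)u_0=-2iu_0$ gives $c(e^\flat)c((Je)^\flat)u_0=-iu_0$ for every unit vector $e$. Hence $\langle c(e^\flat)u_0,c((Je)^\flat)u_0\rangle=i$. This forces
\[
|D_A(uu_0)|^2=|\nabla_au|^2-2\sum_{k=1}^2\mathrm{Im}\langle\nabla_{a,e_k}u,\nabla_{a,Je_k}u\rangle,
\]
which, together with $d\lambda=\Theta-|u|^2\,iF_a$, gives exactly the signs you need.
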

\begin{proof}[\bf Proof of Lemma \ref{m:basic-identities}]
Recall that  \[
 2\pi\En^X_{\eps,g}(u,a)=\int_X
 \left(|\nabla_a u|_g^2+\eps^2|F_a|_g^2+
       \frac{(1-|u|^2)^2}{4\eps^2}\right)dV_g.
\]

We first verify the identities
\begin{align}
 \|\nabla_a u\|_2^2
 &=\|D_A(u u_0)\|_2^2
   +\int_X\langle iF_a,\omega\rangle_g|u|^2\,dV_g,
       \label{m:scalar-identity}\\
 |F_a|^2dV_g
 &=2|P_+F_a|^2dV_g-(iF_a)\wedge(iF_a).
       \label{m:form-identity}
\end{align}
For \eqref{m:scalar-identity},  we use the Weitzenb\"ock formula
in the conventions of \cite[Section 2(a)]{Taubes1996}:
\[
 D_A^2
 =\nabla_A^*\nabla_A+\frac R4
  +\frac12c_+(P_+F_A)
 \quad\text{on }S_+,
\]
where $R$ is the scalar curvature of $g$.

Fix a point and choose a local orthonormal frame
$e_1,\ldots,e_4$ whose covariant derivatives vanish at
that point. 
Applying the covariant derivative, it follows that	The Leibniz rule for the tensor product connection gives
\[
\begin{aligned}
 \nabla_A^*\nabla_A(uu_0)
 ={}&(\nabla_a^*\nabla_a u)u_0
     -2\sum_{\ell=1}^4
       (\nabla_{a,e_\ell}u)\nabla_{A_0,e_\ell}u_0\\
    &+u\nabla_{A_0}^*\nabla_{A_0}u_0.
\end{aligned}
\]
Since $F_A=F_{A_0}+2F_a$, the Weitzenb\"ock formulas for
$A$ and $A_0$ imply
\[
\begin{aligned}
 D_A^2(uu_0)
 ={}&(\nabla_a^*\nabla_a u)u_0
     -2\sum_{\ell=1}^4
       (\nabla_{a,e_\ell}u)\nabla_{A_0,e_\ell}u_0\\
    &+uD_{A_0}^2u_0+c_+(P_+F_a)(uu_0).
\end{aligned}
\]
Here the operators involving $A_0$ act on the canonical
spinor factor. By the choice of $A_0$, we have
$D_{A_0}u_0=0$ and $\langle\nabla_{A_0}u_0, u_0\rangle=0$.
Thus the term $uD_{A_0}^2u_0$ vanishes, and the second term
has zero inner product with $uu_0$.

It remains to calculate the curvature term.
The component of $P_+F_a$ along $\omega$ is
\[
 -\frac i2\langle iF_a,\omega\rangle_g\,\omega,
\]
since $|\omega|_g^2=2$. By the Clifford relations, the action of the component
perpendicular to $\omega$ maps the first summand $E$ into
$K^{-1}\otimes E$. Since
$c_+(\omega)u_0=-2iu_0$, we obtain
\[
\begin{aligned}
 \langle uu_0,c_+(P_+F_a)(uu_0)\rangle
 &=
 -\frac i2\langle iF_a,\omega\rangle_g|u|^2
       \langle u_0,c_+(\omega)u_0\rangle\\
 &=-\langle iF_a,\omega\rangle_g|u|^2.
\end{aligned}
\]

We now integrate over the closed manifold $X$.
Formal self-adjointness of the full Dirac operator and
integration by parts give
\[
\begin{aligned}
 \|D_A(uu_0)\|_2^2
 &=
 \int_X\operatorname{Re}
       \langle uu_0,D_A^2(uu_0)\rangle\,dV_g\\
 &=
 \int_X\operatorname{Re}
       \langle u,\nabla_a^*\nabla_a u\rangle\,dV_g
 -\int_X\langle iF_a,\omega\rangle_g|u|^2\,dV_g\\
 &=
 \|\nabla_a u\|_2^2
 -\int_X\langle iF_a,\omega\rangle_g|u|^2\,dV_g.
\end{aligned}
\]
This proves \eqref{m:scalar-identity}.

\eqref{m:form-identity} follows directly from the orthogonal decomposition into
self-dual and anti-self-dual parts of the real two-form $iF_a$.

Since $|\omega|_g^2=2$, we have
\[
 2\eps^2\left|P_+F_a+\frac{i(1-|u|^2)}{4\eps^2}\omega\right|^2
 =2\eps^2|P_+F_a|^2-(1-|u|^2)\langle iF_a,\omega\rangle_g+\frac{(1-|u|^2)^2}{4\eps^2}.
\]

The remaining integrals are
\[
 \int_X iF_a\wedge\omega=2\pi c_1(E)\cdot[\omega],\qquad
 \int_X(iF_a)\wedge(iF_a)=4\pi^2c_1(E)^2.
\]
This proves \eqref{m:BPSidentity}.
\end{proof}

\begin{lemma}\label{m:SW-AH}
For solutions of \eqref{m:SW-dirac}--\eqref{m:SW-curvature}, we have
\begin{equation}\label{m:energyasymptotic}
 \En^X_{\eps_r,g}(\alpha,a)=c_1(E)\cdot[\omega]+o(1)
 \qquad(r\to\infty).
\end{equation}
The error is uniform over all solutions for the fixed data $(X,\omega,J,E)$.
\end{lemma}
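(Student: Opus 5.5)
The plan is to apply the identity \eqref{m:BPSidentity} of Lemma~\ref{m:basic-identities} with $u=\alpha$ and $\eps=\eps_r$, and to show that each term on its right side other than $2\pi c_1(E)\cdot[\omega]$ is $o(1)$, uniformly over solutions. The topological correction $4\pi^2\eps_r^2c_1(E)^2=8\pi^2c_1(E)^2/r$ is clearly $O(1/r)$. For the Dirac term, Lemma~\ref{m:taubes-input} gives $\|D_A(\alpha u_0)\|_2^2\le C/r$ directly. It remains to bound the curvature square.

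For the curvature term, I substitute the curvature equation \eqref{m:SW-curvature}. Since $1/(4\eps_r^2)=r/8$, we have
\[
 P_+F_a+\frac{i(1-|\alpha|^2)}{4\eps_r^2}\omega
 =-\frac{ir}{8}|\beta|^2\omega+\frac{ir}{4}(\alpha\beta^*+\alpha^*\beta).
\]
The two summands are pointwise orthogonal. Using $|\omega|_g^2=2$ and $|\alpha\beta^*+\alpha^*\beta|^2=2|\alpha|^2|\beta|^2$, the squared norm equals
\[
 \frac{r^2}{32}|\beta|^4+\frac{r^2}{8}|\alpha|^2|\beta|^2 .
\]
Multiplying by $2\eps_r^2=4/r$ and integrating, the term becomes
\[
 \frac r8\|\beta\|_4^4+\frac r2\int_X|\alpha|^2|\beta|^2\,dV_g .
\]
By \eqref{m:SW-estimate}, this is at most $C/r$.

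Combining these bounds gives
\[
 \bigl|2\pi\En^X_{\eps_r,g}(\alpha,a)-2\pi c_1(E)\cdot[\omega]\bigr|\le C'/r
\]
for $r\ge r_0$, with $C'$ depending only on $(X,\omega,J,E)$. Dividing by $2\pi$ gives \eqref{m:energyasymptotic}, with error $O(1/r)$ and uniform in the solution.

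The only delicate point is the bookkeeping of normalizations when substituting \eqref{m:SW-curvature}. One must check that the $\omega$-components cancel exactly, which relies on $\eps_r^2=2/r$, and that the identification of $\alpha\beta^*+\alpha^*\beta$ with a self-dual form is orthogonal to $\omega$ with the stated norm. Both facts were recorded above, so I expect no real obstacle.
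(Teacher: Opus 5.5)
Your proposal is correct and follows the paper's proof essentially step for step. You insert the curvature equation into the square-completion identity \eqref{m:BPSidentity} with $u=\alpha$ and $\eps=\eps_r$. You then use orthogonality and $|\omega|_g^2=2$ to reduce the curvature term to $\frac r2\int_X|\alpha|^2|\beta|^2\,dV_g+\frac r8\|\beta\|_4^4$, and bound it, together with $\|D_A(\alpha u_0)\|_2^2$, by Lemma~\ref{m:taubes-input}; the topological correction is $-8\pi^2c_1(E)^2/r$. Your explicit $O(1/r)$ rate, uniform for $r\ge r_0$, is exactly what the paper's argument gives.
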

\begin{proof}[\bf Proof of Lemma \ref{m:SW-AH}]
By \eqref{m:SW-curvature} and $\eps_r^2=2/r$, we have
\[
 P_+F_a+\frac{i(1-|\alpha|^2)}{4\eps_r^2}\omega
 =-\frac{ir}{8}|\beta|^2\omega
      +\frac{ir}{4}(\alpha\beta^*+\alpha^*\beta).
\]
The two terms on the right are orthogonal. Hence
\[
 2\eps_r^2\left\|P_+F_a+
          \frac{i(1-|\alpha|^2)}{4\eps_r^2}\omega\right\|_2^2
 =\frac r2\int_X|\alpha|^2|\beta|^2\,dV_g
      +\frac r8\|\beta\|_4^4=O(r^{-1}).
\]
By Lemma~\ref{m:taubes-input}, we also have
$\|D_A(\alpha u_0)\|_2^2=O(r^{-1})$.
The topological correction in \eqref{m:BPSidentity} is
$-8\pi^2c_1(E)^2/r$. We substitute these estimates into
\eqref{m:BPSidentity} and divide by $2\pi$. This proves
\eqref{m:energyasymptotic}.
\end{proof}

In the next section, we restrict $(\alpha,a)$ to a domain on
which $E$ is trivial. Since the energy density is nonnegative,
the estimate obtained above on the closed manifold $X$ also
bounds the energy of the restricted pairs.

\section{Seiberg--Witten families and estimates on balls and cylinders}\label{m:sec-upper}
The purpose of this section is to prove Theorem~\ref{m:thm:models}.
We use regular SW moduli spaces on $\CP^2$ and $S^2\times S^2$.
We first give the chamber calculation and the construction of AH
sweepouts used in both cases. We then prove the ball upper bound
and the cylinder formula.

The use of the SW index and the symplectic area is related to
\cite[Proposition 2.16]{ChaidezWormleighton2021},
\cite[Lemma 7.4]{Hutchings2022}, and
\cite[Lemma 3.18]{Beiner2026}. These works use the index and area
to bound ECH or alternative capacities. Here we use the 
SW moduli space and its integer evaluation class to
construct an AH sweepout.

\subsection{The SW moduli space and the chamber}
We use the construction of the moduli space in
\cite[Section 1(b)]{Taubes1996}. For compactness, orientations, and
transversality, we refer to \cite{Morgan1996}.
Let $(X,\omega)$ be a closed symplectic four-manifold with
$b_1(X)=0$ and $b_2^+(X)=1$. Let $E\to X$ be a Hermitian line
bundle, and put $e=c_1(E)$. We assume
\[
 e\cdot[\omega]>0,\qquad
 c_1(L)\cdot[\omega]>0,\qquad
 2m=e^2-c_1(K)\cdot e,\quad m\in\Z_{\ge1},
\]
where $L=K^{-1}\otimes E^2$ is the determinant line. We also assume
that $\omega$ has a compatible metric of positive scalar curvature.
We will check these assumptions for the two models below.
For an arbitrary compatible structure $J$, put
$g=\omega(\cdot,J\cdot)$ and use the equations of
Section~\ref{m:sec-comparison}. The virtual real dimension is
\begin{equation}\label{m:general-dimension}
 \frac{c_1(L)^2-(2\chi(X)+3\operatorname{sign}(X))}{4}
 =e^2-c_1(K)\cdot e=2m.
\end{equation}

We use the homology orientation for which $[\omega]$ is positive.
Let $\mathcal M$ be a regular moduli space with no reducible
solutions, and fix $x_0\in X$. First divide the solutions by
\[
 \G_{x_0}(X)=\{\gamma\in C^\infty(X,S^1):\gamma(x_0)=1\}.
\]
The constant gauge transformations give a principal circle bundle
$\mathcal M^0\to\mathcal M$. Its associated line is denoted by
$\LL^{\mathrm{SW}}_{x_0}$, and we put
\[
 U=c_1(\LL^{\mathrm{SW}}_{x_0})\in H^2(\mathcal M;\Z).
\]
We use the same quotient convention as in \eqref{c:eq:eval-line}.
For regular data in a fixed chamber, the SW invariant is the
integer $\langle U^m,[\mathcal M]\rangle$.

We now calculate this invariant. We use the integer wall-crossing
formula in \cite[Theorem 2.2]{ChaidezWormleighton2021} and the
vanishing argument in the proof of Proposition 2.5 of that paper.
A solution is reducible when its spinor is identically zero.
For the equation
\[
 D_A\psi=0,\qquad
 P_+F_A=\frac14\tau(\psi\otimes\psi^*)+\mu,
 \qquad \mu\in i\Omega_g^+(X),
\]
a reducible solution must satisfy $P_+F_A=\mu$.
Since $b_2^+(X)=1$, the space of self-dual harmonic two-forms for
$g$ is $\R\omega$. By Hodge theory, this curvature equation is
solvable exactly when
\[
 \Delta_g(\mu):=\int_X i\mu\wedge\omega
                 -2\pi c_1(L)\cdot[\omega]=0.
\]
This equality defines the wall. For
$\mu_r=-ir\omega/4+P_+F_{A_0}$, we obtain
\begin{equation}\label{m:general-wall}
 \begin{aligned}
 \Delta_g(\mu_r)
 &=\frac r4\int_X\omega\wedge\omega
   +2\pi c_1(K^{-1})\cdot[\omega]
   -2\pi c_1(L)\cdot[\omega]\\
 &=\frac r4\int_X\omega\wedge\omega-4\pi e\cdot[\omega].
 \end{aligned}
\end{equation}
Thus sufficiently large $r$ puts the equation on the positive
side of the wall. This condition is independent of $J$.

Choose a compatible metric $g_*$ of positive scalar curvature.
For $g_*$ and $\mu=0$, the SW equations have no solutions.
Indeed, positive scalar curvature implies $\psi=0$. The remaining
equation $P_+F_A=0$ is impossible because
$c_1(L)\cdot[\omega]>0$. Hence the invariant is zero on the
negative side of the wall. The integer wall-crossing formula gives
\begin{equation}\label{m:SW-pairing}
 \langle U^m,[\mathcal M]\rangle=\pm1
\end{equation}
for every regular moduli space on the positive side. We call this
the symplectic chamber. In particular, $U^m\ne0$.
This is an integer equality, not only nonvanishing modulo two.

For the standard complex structures on $\CP^2$ and $S^2\times S^2$,
Riemann--Roch gives $m=\chi(E)-\chi(\mathcal O_X)$. This is the
index condition used for algebraic capacities \cite{Wormleighton2022}.

\subsection{Regularization and restriction}
Keep $(X,\omega,J,E)$ fixed as above. By Lemma~\ref{m:SW-AH},
we can choose $\eta(r)\ge0$ with $\eta(r)\to0$ such that every
solution of the original equation satisfies
\[
 \En^X_{\eps_r,g}(\alpha,a)\le e\cdot[\omega]+\eta(r),
 \qquad \eps_r=\sqrt{2/r}.
\]
The original equation need not be regular. We choose nearby
regular data while keeping the original metric in the AH energy.

\begin{lemma}\label{m:regularization}
Fix a sufficiently large $r$. For every $\delta>0$, there are
regular data arbitrarily close to the original data and in the
same chamber such that their compact oriented moduli space
$\mathcal M_{r,\delta}$ has dimension $2m$ and satisfies
\begin{align}
 \langle U^m,[\mathcal M_{r,\delta}]\rangle&=\pm1,
       \label{m:top-pair}\\
 \sup_{\mathcal M_{r,\delta}}\En^X_{\eps_r,g}(\alpha,a)
       &\le e\cdot[\omega]+\eta(r)+\delta.
       \label{m:nearby-bound}
\end{align}
\end{lemma}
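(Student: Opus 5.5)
We perturb the data to regular data and show that the two conclusions pass to the perturbed moduli space. Fix $r$ large enough that the original equation (metric $g$, perturbation $\mu_r$) lies strictly on the positive side of the wall; by \eqref{m:general-wall} we have $\Delta_g(\mu_r)>0$. We perturb only the self-dual form, $\mu=\mu_r+\nu$ with $\nu\in i\Omega^+_g(X)$ small in $C^\ell$, keeping $g$ fixed. For $\|\nu\|$ small, $\Delta_g(\mu)$ stays positive, so there are no reducibles and we remain in the symplectic chamber. The standard Sard--Smale argument (cf.\ \cite{Morgan1996}) gives a residual set of such $\nu$ for which the moduli space $\mathcal M_{r,\nu}$ is cut out transversally. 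It is then a smooth compact oriented manifold of dimension $2m$ by \eqref{m:general-dimension}. Compactness uses the usual a priori bound $|\psi|^2\le C(1+\|\mu\|_{C^0})$ from the Weitzenb\"ock formula, uniformly in small $\nu$. Then \eqref{m:SW-pairing} gives \eqref{m:top-pair} directly, since that pairing holds for every regular moduli space on the positive side.

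For the energy bound \eqref{m:nearby-bound}, we argue by compactness and contradiction, and this is the main point. Suppose there are $\nu_j\to0$ in $C^\ell$ and solutions $(A_j,\psi_j)$ of the $\nu_j$-perturbed equations with
\[
 \En^X_{\eps_r,g}(\alpha_j,a_j)>e\cdot[\omega]+\eta(r)+\delta .
\]
The uniform $C^0$ bound on $\psi_j$ and elliptic bootstrapping in Coulomb gauge relative to a fixed reference connection give uniform $C^k$ bounds after gauge transformations. This is the same compactness argument used for a fixed perturbation, now with perturbations bounded in $C^\ell$. A subsequence converges smoothly, modulo gauge, to a solution $(A,\psi)$ of the original equation.

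The AH energy of the scalar component is gauge invariant. It is also continuous in the smooth topology, since $\alpha$ is obtained from $\psi$ by the fixed projection onto the summand $Eu_0$, and $a=(A-A_0)/2$. Hence $\En^X_{\eps_r,g}(\alpha,a)\ge e\cdot[\omega]+\eta(r)+\delta$. This contradicts the choice of $\eta(r)$, which bounds the energy of every solution of the original equation by Lemma~\ref{m:SW-AH}. So there is a neighborhood of $\nu=0$ on which every solution satisfies \eqref{m:nearby-bound}. Choosing a regular $\nu$ in this neighborhood, inside the chamber, gives both statements simultaneously.

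The obstacle we expect is making this compactness uniform over a family of perturbations. Concretely, we need the $C^0$ estimate and the bootstrapping constants to depend only on a bound for $\|\nu\|_{C^\ell}$, and the gauge fixing to be global on $X$ using $b_1(X)=0$. We would handle this by repeating the standard proof of compactness of SW moduli spaces with the perturbation treated as a bounded inhomogeneous term. Orientation of $\mathcal M_{r,\delta}$ comes from the fixed homology orientation and is unaffected by the perturbation.
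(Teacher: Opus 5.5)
Your proposal is correct and follows the paper's proof. You show the energy bound persists for all data near the original by compactness and contradiction, then choose regular data in that neighborhood inside the symplectic chamber, so that chamber invariance gives the $\pm1$ pairing. The only difference is that you keep $g$ fixed and perturb only the self-dual form, which suffices for transversality and avoids the paper's identification of spinor bundles across nearby metrics.
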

\begin{proof}[\bf Proof of Lemma~\ref{m:regularization}]
We fix $r$ and $\delta>0$. For metrics near $g$, we identify the
spinor bundles and their determinant lines with the original
bundles. These identifications depend smoothly on the metric and
commute with gauge transformations. We use the original splitting
and the fixed connection $A_0$ to write
\[
 \psi=r^{1/2}(\alpha u_0+\beta),\qquad A=A_0+2a.
\]
Thus $(\alpha,a)$ is a configuration on the fixed line bundle $E$.
We compute its AH energy using the original metric $g$.

We first show that \eqref{m:nearby-bound} holds for all solutions
when the metric and perturbation are sufficiently close to the
original data. Suppose otherwise. Then there are metrics and
perturbations converging to the original data, with solutions
$(A_j,\psi_j)$ such that
\[
 \En^X_{\eps_r,g}(\alpha_j,a_j)
 >e\cdot[\omega]+\eta(r)+\delta.
\]
By SW compactness, after applying gauge transformations and passing
to a subsequence, these solutions converge smoothly to a solution
$(A_\infty,\psi_\infty)$ of the original equation. Continuity of
the energy gives
\[
 e\cdot[\omega]+\eta(r)+\delta
 \le\En^X_{\eps_r,g}(\alpha_\infty,a_\infty)
 \le e\cdot[\omega]+\eta(r),
\]
a contradiction. We now choose regular data in this neighborhood
and in the same chamber. Their moduli space is compact and
oriented, with real dimension $2m$. The invariant is unchanged
within the chamber, so \eqref{m:SW-pairing} gives
\eqref{m:top-pair}.
\end{proof}

\begin{lemma}\label{m:SW-class-match}
Let $D\Subset X$ be a connected smooth domain such that $E$ is
trivial on a neighborhood of $\overline D$. There are sequences
$r_j\to\infty$ and $\eps_j=\sqrt{2/r_j}$, compact regular oriented
SW moduli spaces $\mathcal M_j$, and continuous maps
\[
 F_j:\mathcal M_j\longrightarrow\B^*(D),\qquad
 [A,\psi]\longmapsto[\alpha|_D,A_a|_D],
\]
where $\psi=r_j^{1/2}(\alpha u_0+\beta)$ and $A=A_0+2a$ after
the background identifications. In a fixed unitary trivialization,
$\nabla_a=d-iA_a$. These maps satisfy
\begin{align}
 F_j^*\mu_D&=U_{j},
 &\langle U_{j}^m,[\mathcal M_j]\rangle&=\pm1,
       \label{m:class-identification}\\
 \max_{\mathcal M_j}\En^D_{\eps_j,g|_D}\circ F_j
       &\le e\cdot[\omega]+o(1).&&\label{m:uniform-AH-bound}
\end{align}
Consequently,
\begin{equation}\label{m:fixed-J-bound}
 W_m(D,g|_D)\le e\cdot[\omega].
\end{equation}
\end{lemma}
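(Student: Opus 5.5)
We take $r_j\to\infty$ and $\delta_j\downarrow0$, and let $\mathcal M_j=\mathcal M_{r_j,\delta_j}$ be the regular moduli spaces from Lemma~\ref{m:regularization}. These are compact, oriented and of dimension $2m$, and they satisfy \eqref{m:top-pair} and \eqref{m:nearby-bound}. Since $\eta(r_j)+\delta_j\to0$, the energy bound \eqref{m:uniform-AH-bound} will follow once $F_j$ is defined. The energy density is nonnegative, so restricting from $X$ to $D$ does not increase the energy.

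\textbf{Defining $F_j$.} Fix a unitary trivialization of $E$ on a neighborhood of $\overline D$. A solution $(A,\psi)$ gives the pair $(\alpha|_D,A_a|_D)$ through $\psi=r^{1/2}(\alpha u_0+\beta)$ and $A=A_0+2a$. A gauge transformation $\gamma$ on $X$ acts by $(\alpha,a)\mapsto(\gamma\alpha,a-\gamma^{-1}d\gamma)$. In the trivialization this becomes $(\gamma\alpha,A_a-i\gamma^{-1}d\gamma)$, which matches the action in \eqref{c:eq:quotient}. So $F_j$ descends to a well-defined map on gauge classes. Continuity follows from the smooth topology on the moduli space together with local slices.

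We must also check that $\alpha|_D\not\equiv0$. Suppose instead that $\alpha\equiv0$ on $D$. Then \eqref{c:eq:zero-barrier} gives $\En^D\ge\Vol(D)/(8\pi\eps_j^2)$. This tends to infinity and contradicts the uniform energy bound for large $j$. Hence the image lies in $\B^*(D)$ after discarding finitely many $j$.

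\textbf{Matching the classes.} Choose the base point $x_0\in D$. The class $U_j$ is the first Chern class of the line associated with $\mathcal M^0\to\mathcal M$, where the quotient is by $\G_{x_0}(X)$ and the residual circle acts through evaluation at $x_0$. The class $\mu_D$ is the Chern class of the line obtained by evaluating ball gauge transformations at $x_0$, with the same weight-one convention.

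The map $\mathcal M^0_j\to\Conf^*(D)/\G_{x_0}(D)$ is equivariant for the constant circle. Restricting a global gauge transformation to $D$ preserves its value at $x_0$. Therefore $F_j^*\LL_{x_0}\cong\LL^{\mathrm{SW}}_{x_0}$, and so $F_j^*\mu_D=U_j$. One point to watch is the factor $2$ in $A=A_0+2a$. The circle acts on $\psi$, and hence on $\alpha$, with weight one, so no doubling occurs in the character. Together with \eqref{m:top-pair}, this gives \eqref{m:class-identification}.

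\textbf{Sweepout condition and the bound \eqref{m:fixed-J-bound}.} The moduli space $\mathcal M_j$ is a compact smooth manifold, so we triangulate it as a finite simplicial complex. By \eqref{m:class-identification}, $\langle F_j^*(\mu_D^m),[\mathcal M_j]\rangle=\pm1$, and therefore $F_j^*(\mu_D^m)\ne0$ in integral cohomology. Thus $(\mathcal M_j,F_j)$ is an AH $m$-sweepout. It follows that $w_{\eps_j,m}(D,g|_D)\le e\cdot[\omega]+o(1)$.

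This controls the liminf only along the sequence $\eps_j$. Since $W_m$ is defined as a liminf over all $\eps$, the value along a sequence bounds it from above, which gives \eqref{m:fixed-J-bound}.

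The main obstacle is the class identification: making the comparison between the SW based quotient and the AH evaluation line rigorous, including continuity of $F_j$ on the regularized moduli space and the sign and weight conventions. The remaining steps are direct consequences of the earlier lemmas.
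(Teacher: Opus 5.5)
Your proposal is correct and follows the paper's proof essentially step by step. It uses the regularized moduli spaces from Lemma~\ref{m:regularization}, the zero-section barrier to get $\alpha|_D\not\equiv0$ for large $j$, and the compatibility of restriction with the common character $\gamma\mapsto\gamma(x_0)$ to get $F_j^*\mu_D=U_j$. It then triangulates to obtain an AH $m$-sweepout and notes that the lower limit along $\eps_j$ bounds $W_m$ from above. One cosmetic slip: $\mu_D$ is defined by evaluating gauge transformations on $D$, which need not be a ball.
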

\begin{proof}[\bf Proof of Lemma~\ref{m:SW-class-match}]
Choose $r_j\to\infty$ and apply Lemma~\ref{m:regularization} with
$\delta_j\downarrow0$. The  moduli spaces satisfy
$\langle U_j^m,[\mathcal M_j]\rangle=\pm1$, and the scalar AH
energy is at most $e\cdot[\omega]+o(1)$ in the fixed metric $g$.
Restriction to $D$ does not increase the energy.

Before defining $F_j$, we check that $\alpha|_D\not\equiv0$ for
all solutions when $j$ is sufficiently large. Otherwise the
potential energy on $D$ alone would be at least
\[
 \frac{\Vol_g(D)}{8\pi\eps_j^2}
 =\frac{r_j\Vol_g(D)}{16\pi}\longrightarrow\infty,
\]
contrary to the energy bound.

The restriction respects gauge equivalence. A gauge transformation
$\gamma$ sends $\alpha$ to $\gamma\alpha$ and $a$ to
$a-\gamma^{-1}d\gamma$. Hence
\[
 A_a\longmapsto A_a-i\gamma^{-1}d\gamma,
\]
which is the action in \eqref{c:eq:quotient}. Thus $F_j$ is well
defined and continuous. Fix $x_0\in D$. The character defining
the SW evaluation line is $\gamma\mapsto\gamma(x_0)$.
After restriction, the same character defines the pullback of the
AH evaluation line. Therefore
\[
 F_j^*\LL_{x_0}\cong\LL^{\mathrm{SW}}_{x_0},\qquad
 F_j^*\mu_D=U_j\quad\text{in }H^2(\mathcal M_j;\Z).
\]
This proves \eqref{m:class-identification} and
\eqref{m:uniform-AH-bound}.

Choose a finite triangulation $\tau_j:K_j\to\mathcal M_j$ and
put $\widetilde F_j=F_j\circ\tau_j$. Since the top pairing is
$\pm1$, we have
\[
 \widetilde F_j^*(\mu_D^m)=\tau_j^*(U_j^m)\ne0.
\]
Thus $(K_j,\widetilde F_j)$ is an AH $m$-sweepout. It follows that
\[
 W_m(D,g|_D)
 \le\liminf_{j\to\infty}w_{\eps_j,m}(D,g|_D)
 \le e\cdot[\omega].
\]
This proves \eqref{m:fixed-J-bound}. We keep $X,\omega,J,E,D$
fixed while taking this limit.
\end{proof}

\subsection{Estimates on balls and cylinders}
\paragraph*{The ball.}
Write $X=\CP^2(b)$ for the projective plane with the standard
symplectic form $\omega$ for which a projective line has area $b$.
Let $H$ be the hyperplane class, normalized by
$\langle H^2,[X]\rangle=1$. Fix $d\ge1$ and $E=\mathcal O(d)$.
Then
\[
 [\omega]=bH,\qquad c_1(E)=dH,\qquad c_1(L)=(2d+3)H.
\]
Since $\chi(X)=3$ and $\operatorname{sign}(X)=1$, we have
\begin{equation}\label{m:dimension}
 \frac{c_1(L)^2-(2\chi(X)+3\operatorname{sign}(X))}{4}
 =\frac{(2d+3)^2-9}{4}=2N_d,
 \qquad N_d=\frac{d(d+3)}2.
\end{equation}
Also, $e\cdot[\omega]=db$ and $c_1(L)\cdot[\omega]=(2d+3)b>0$.
The Fubini--Study metric has positive scalar curvature.
Thus the assumptions above hold, with $m=N_d$; see also
\cite[Example 2.7]{ChaidezWormleighton2021}.
Equation~\eqref{m:general-wall} becomes
\[
 \Delta_g(\mu_r)=\frac{rb^2}{4}-4\pi db,
\]
so $r>16\pi d/b$ puts the equation in the symplectic chamber.

\begin{lemma}\label{m:sw-ball}\label{m:ball-bound}
For every $c>0$ and $k\ge1$, we have
\[
 \cAH_k(B(c),\omega_0)\le c\,d(k),\qquad
 d(k)=\min\{d\in\Z_{\ge1}:d(d+3)/2\ge k\}.
\]
\end{lemma}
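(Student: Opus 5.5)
The bound follows by applying Lemma~\ref{m:SW-class-match} on $X=\CP^2(b)$ with $E=\mathcal O(d)$, $d=d(k)$, and transporting the result into $B(c)$ by a symplectic embedding.

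\emph{Step 1 (reduction).} By Lemma~\ref{lem:connected-exhaustion} and the definition \eqref{eq:family-capacity}, it suffices to fix a connected smooth domain $D\Subset B(c)$ and a compatible $J$ near $\overline D$, and to prove
\[
 W_k(D,g_J)\le c\,d(k).
\]
Since $\overline D$ is compact, $\overline D\subset B(c')$ for some $c'<c$.

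\emph{Step 2 (embedding into $\CP^2$).} Fix $b$ with $c'<b<c$. The ball $B(b)$ is symplectomorphic to $\CP^2(b)\setminus\CP^1$. Its image avoids a projective line, so it lies in an affine chart, over which $\mathcal O(d)$ is trivial. The image of $\overline D$ is compact in this chart.

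\emph{Step 3 (extending $J$).} Transport $J$ and extend it to an $\omega$-compatible almost complex structure $\widetilde J$ on all of $\CP^2(b)$. This is possible because compatible structures form a contractible bundle, so local choices extend. Then $g=\omega(\cdot,\widetilde J\cdot)$ restricts to $g_J$ on $D$.

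\emph{Step 4 (the SW sweepout).} The chamber assumptions were verified above for every compatible $J$: $e\cdot[\omega]=db>0$, $c_1(L)\cdot[\omega]>0$, $m=N_d\ge1$, and the Fubini--Study metric gives the negative-chamber vanishing. These conditions do not depend on $J$, since the wall computation \eqref{m:general-wall} is $J$-independent and the positive-scalar-curvature metric is only used on the other side of the wall. Lemma~\ref{m:SW-class-match} therefore gives
\[
 W_{N_d}(D,g_J)\le db.
\]

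\emph{Step 5 (conclusion).} Since $N_{d(k)}\ge k$, monotonicity in the index (Lemma~\ref{c:lem:elementary}) gives
\[
 W_k(D,g_J)\le W_{N_d}(D,g_J)\le d(k)\,b<d(k)\,c.
\]
Taking the suprema over $D$ and $J$ proves the lemma. The extra room $b<c$ also covers the case $b=c'$ once one lets $c'\uparrow c$.

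\emph{Main obstacle.} The hardest point is the uniformity in $J$. Lemma~\ref{m:SW-class-match} is stated for a fixed $J$ on $X$, and we apply it to an arbitrary compatible extension $\widetilde J$. This needs Taubes's estimates (Lemma~\ref{m:taubes-input}) to hold for arbitrary compatible $J$, which they do with constants depending on $J$. That suffices here, because $J$ is fixed before $\eps\to0$.

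A smaller point is the symplectomorphism $B(b)\cong\CP^2(b)\setminus\CP^1$ with respect to the paper's area normalization $\pi|z|^2<b$. The line has area $b$ under this normalization, matching the convention $[\omega]=bH$.
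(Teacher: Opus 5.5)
Your proposal is correct and follows essentially the same route as the paper: extend $J$ to a compatible structure on $\CP^2(b)$, apply Lemma~\ref{m:SW-class-match} with $E=\mathcal O(d)$ (trivial on the ball chart) to get $W_{N_d}(D,g_J)\le db$, and use monotonicity in the index together with $N_{d(k)}\ge k$. The only difference is cosmetic. You use compactness of $\overline D$ and the identification $B(b)\cong\CP^2(b)\setminus\CP^1$ to take $b<c$ directly, whereas the paper takes $b>c$, so that the ball chart contains $B(c)$, and lets $b\downarrow c$ at the end.
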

\begin{proof}[\bf Proof of Lemma~\ref{m:sw-ball}]
Fix a connected smooth domain $D\Subset B(c)$, a compatible
structure $J$ near $\overline D$, an integer $d\ge1$, and $b>c$.
The standard symplectic ball chart in $\CP^2(b)$ contains $B(c)$.
Extend $J$ to a compatible structure $\widetilde J$ on $\CP^2(b)$
which agrees with $J$ near $\overline D$. Then
$g_{\widetilde J}|_D=g_J$. The line bundle $\mathcal O(d)$ is
trivial on the ball chart. Lemma~\ref{m:SW-class-match} gives
\[
 W_{N_d}(D,g_J)\le db.
\]
For $k\le N_d$, monotonicity in the index gives the same upper
bound for $W_k(D,g_J)$. Since $D$ and $J$ were arbitrary and $db$
is independent of them, \eqref{eq:family-capacity} gives
$\cAH_k(B(c),\omega_0)\le db$. We let $b\downarrow c$ and choose
$d=d(k)$.
\end{proof}

\paragraph*{The cylinder.}
Let
\[
 X=S^2(a_0)\times S^2(Na_0),\qquad
 \Omega=\operatorname{pr}_1^*\omega_1+
        \operatorname{pr}_2^*\omega_2,
\]
where the factors have areas $a_0$ and $Na_0$, and $N\ge1$ is an
integer. Put
\[
 F=[S^2\times\{\mathrm{pt}\}],\qquad
 G=[\{\mathrm{pt}\}\times S^2].
\]
Then $F^2=G^2=0$ and $F\cdot G=1$. Choose $E$ with
$e=c_1(E)=k\PD(F)$. The canonical class is
$c_1(K)=-2\PD(F)-2\PD(G)$, so
\begin{equation}\label{m:cylinder-intersections}
 e^2=0,\qquad c_1(K)\cdot e=-2k,\qquad
 e\cdot[\Omega]=ka_0.
\end{equation}
Thus the virtual real dimension is $2k$. Moreover,
\[
 c_1(L)\cdot[\Omega]=2(k+N+1)a_0>0,\qquad
 \int_X\Omega\wedge\Omega=2Na_0^2.
\]
The product of round metrics is compatible and has positive scalar
curvature. Hence the assumptions above hold with $m=k$; see also
\cite[Example 2.8]{ChaidezWormleighton2021}.
By \eqref{m:general-wall},
\[
 \Delta_g(\mu_r)=\frac{rNa_0^2}{2}-4\pi ka_0.
\]
In particular, $r>8\pi k/(Na_0)$ puts the equation in the
symplectic chamber, where $\langle U^k,[\mathcal M]\rangle=\pm1$.

\begin{lemma}\label{m:sw-cylinder}
For every $a>0$ and $k\ge1$, we have
\begin{equation}\label{m:cylinder-exact}
 \cAH_k(Z^4(a),\omega_0)=ka.
\end{equation}
\end{lemma}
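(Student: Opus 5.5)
The proof has two halves: an upper bound from the Seiberg--Witten families on $S^2\times S^2$, and a lower bound from localization and the first normalization.

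\emph{Upper bound.} Fix a connected smooth domain $D\Subset Z^4(a)$, a compatible $J$ near $\overline D$, and $a_0>a$. As in the proof of Lemma~\ref{c:prop:cylinder}, I choose $N$ large so that a product of symplectic disk charts embeds a neighborhood of $\overline D$ into $X=S^2(a_0)\times S^2(Na_0)$. Since the projection of $\overline D$ to the second factor is bounded, large $N$ makes room. I extend $J$ to an $\Omega$-compatible $\widetilde J$ on $X$. Take $E$ with $c_1(E)=k\PD(F)$. The chart lies in $D^2\times D^2$, which is contractible, so $E$ is trivial on a neighborhood of $\overline D$. The chamber computation above shows the hypotheses of Lemma~\ref{m:SW-class-match} hold with $m=k$, $e\cdot[\Omega]=ka_0$, and $r$ large. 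That lemma gives $W_k(D,g_J)\le ka_0$. Taking the suprema over $D$ and $J$ and letting $a_0\downarrow a$ gives $\cAH_k(Z^4(a))\le ka$.

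\emph{Lower bound.} For $\delta>0$, I place $k$ pairwise disjoint open balls $B^4(a-\delta)$ inside $Z^4(a)$, translated far apart in the $\C$-factor. Each ball is a connected open subset of the connected manifold $Z^4(a)$. Lemma~\ref{c:prop:optimized-local} with all $k_i=1$ then gives
\[
 \cAH_k(Z^4(a))\ge\sum_{i=1}^k\cAH_1(B^4(a-\delta))=k(a-\delta),
\]
where the last equality is the normalization \eqref{c:eq:main-normalization} from Theorem~\ref{c:thm:main}. Let $\delta\downarrow0$.

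\emph{Main obstacle.} The delicate step is the upper bound, specifically confirming that Lemma~\ref{m:SW-class-match} applies for an arbitrary compatible $\widetilde J$ extended from $J$. This requires the chamber condition to be independent of $J$, which \eqref{m:general-wall} shows. It also requires that the wall-crossing value $\pm1$ be computed with the positive-scalar-curvature product metric, which is compatible with the same $\Omega$. The remaining points, triviality of $E$ on the chart and the requirement $\alpha|_D\not\equiv0$, are handled inside that lemma.
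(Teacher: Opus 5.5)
Your proposal is correct and follows the paper's proof essentially step for step. The upper bound comes from Lemma~\ref{m:SW-class-match} on $S^2(a_0)\times S^2(Na_0)$ with $c_1(E)=k\PD(F)$, and the lower bound comes from Lemma~\ref{c:prop:optimized-local} applied to $k$ disjoint translated balls together with the first normalization. Shrinking to $B^4(a-\delta)$ is harmless but unnecessary, since $B^4(a)$ and its translates in the second complex coordinate already lie in $Z^4(a)$.
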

\begin{proof}[\bf Proof of Lemma~\ref{m:sw-cylinder}]
\emph{Upper bound.}
Fix a connected smooth domain $D\Subset Z^4(a)$, a compatible
structure $J$ near $\overline D$, and $a_0>a$. Use the compactification in
Section~\ref{sec:cylinder-proof}. For sufficiently large $N$, a
product of symplectic disk charts in $X$ contains a neighborhood
of $\overline D$. Extend $J$ to an $\Omega$-compatible structure
$\widetilde J$ on $X$ which agrees with $J$ near $\overline D$.
Choose $E$ as in \eqref{m:cylinder-intersections}. It is trivial
on the product chart. Lemma~\ref{m:SW-class-match} gives
\[
 W_k(D,g_J)\le ka_0.
\]
We let $a_0\downarrow a$ with $D,J,k$ fixed. The bound
$W_k(D,g_J)\le ka$ is independent of $D$ and $J$. Taking the
suprema in \eqref{eq:family-capacity}, we obtain
$\cAH_k(Z^4(a),\omega_0)\le ka$.

\emph{Lower bound.}
The cylinder contains $k$ disjoint copies of $B^4(a)$, obtained
by translations in the second complex coordinate. Apply
Lemma~\ref{c:prop:optimized-local} to these balls. By the first
normalization in Theorem~\ref{c:thm:main},
\[
 \cAH_k(Z^4(a),\omega_0)
 \ge\sum_{i=1}^k\cAH_1(B^4(a),\omega_0)=ka.
\]
This proves the opposite bound.
\end{proof}

\begin{proof}[\bf Proof of Theorem~\ref{m:thm:models}]
The upper bound on the ball and the cylinder formula follow from
Lemmas~\ref{m:sw-ball} and~\ref{m:sw-cylinder}. The lower bound on
the ball follows from the first normalization and monotonicity in
$k$ in Theorem~\ref{c:thm:main}.
\end{proof}

\section{The Euclidean Weyl law}\label{m:sec-weyl}
In this section, we prove Theorems~\ref{thm:intro-weyl}
and~\ref{thm:intro-fixed-weyl}. We prove the cube and smooth-domain
limits for both sequences by the same argument. We follow the
subdivision method of Liokumovich--Marques--Neves in
\cite[Sections 1.4 and 3]{LMN2018}.
\subsection{The limit on a cube}
We write
\[
 w_k(U)=\cAH_k(U,\omega_0),\qquad
 v_k(D)=W_k(D,g_0),\qquad I=(0,1)^4.
\]
The first sequence is defined for every nonempty connected open
set $U\subset\R^4$. For the second sequence, we use bounded
connected smooth domains and open Euclidean cubes, as in
Section~\ref{sec:four-dimensional-definitions}. Until we pass to
general open sets, all domains below are of these two types. We
write $s_k$ for either $w_k$ or $v_k$ when the same argument applies.

Both sequences are nondecreasing in the index and under inclusion.
For $w_k$, this follows from Theorem~\ref{c:thm:main}. For $v_k$,
it follows from Lemmas~\ref{c:lem:elementary}
and~\ref{c:lem:restriction}. The pullback and scaling identities
in Section~\ref{c:sec:topology}, together with the capacity axioms,
show that both sequences are invariant under translations and satisfy
\begin{equation}\label{m:abstract-scale}
 s_k(rD)=r^2s_k(D),\qquad r>0.
\end{equation}

By Lemmas~\ref{c:prop:fixed-local}
and~\ref{c:prop:optimized-local}, both sequences satisfy
\begin{equation}\label{m:abstract-LS}
 s_{\sum_i k_i}(D)\ge\sum_i s_{k_i}(D_i),\qquad k_i\ge1,
\end{equation}
where $D_1,\ldots,D_m$ are pairwise disjoint connected subdomains
of $D$ of the types specified above. Only the values on the
connected ambient domain and the individual subdomains occur in
this inequality.

Choose a ball $B(a_-)$ contained in $I$. The first normalization
and restriction give $w_1(I)\ge a_-$ and $v_1(I)\ge a_-$.
For the upper bound, choose a bounded connected smooth domain
$D_0$ and a ball $B(a_+)$ such that
$\overline I\subset D_0\Subset B(a_+)$. Restriction and
\eqref{eq:family-capacity} give
\[
 v_k(I)\le v_k(D_0)\le w_k(B(a_+)).
\]
Also, $w_k(I)\le w_k(B(a_+))$. By Lemma~\ref{m:sw-ball}, we obtain
\begin{equation}\label{m:abstract-bound}
 0<s_1(I),\qquad s_k(I)\le a_+d(k)\le C\sqrt{k}.
\end{equation}
Therefore the constants
\begin{equation}\label{x:optimized-cube-coefficient}
 b_{\AH}=\sup_{p\ge1}\frac{w_p(I)}{\sqrt p},\qquad
 b_{\AH}^{\mathrm{flat}}=\sup_{p\ge1}\frac{v_p(I)}{\sqrt p}
\end{equation}
are positive and finite. In the common arguments below, we use
\[
 (s_k,b)=(w_k,b_{\AH})\quad\text{or}\quad
 (s_k,b)=(v_k,b_{\AH}^{\mathrm{flat}}).
\]

\begin{lemma}\label{lem:cube-limit}
For either pair $(s_k,b)$ and every open Euclidean cube $Q$ whose
sides are parallel to the coordinate axes, we have
\[
 \lim_{k\to\infty}\frac{s_k(Q)}{\sqrt{k}}=b\sqrt{|Q|}.
\]
\end{lemma}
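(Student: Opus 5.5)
By translation invariance and the scaling law \eqref{m:abstract-scale}, it suffices to treat $Q=I$. Indeed, if $Q=x+rI$, then $s_k(Q)=r^2s_k(I)$ and $\sqrt{|Q|}=r^2$. By the definition \eqref{x:optimized-cube-coefficient}, we have $s_k(I)\le b\sqrt k$ for every $k$, so $\limsup_k s_k(I)/\sqrt k\le b$. It remains to prove $\liminf_k s_k(I)/\sqrt k\ge b$. The plan is the standard subadditivity argument of Liokumovich--Marques--Neves, with the Lusternik--Schnirelmann inequality \eqref{m:abstract-LS} as the only structural input.

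Fix $\delta>0$ and choose $p\ge1$ with $s_p(I)\ge(b-\delta)\sqrt p$. For an integer $n\ge1$, subdivide $I$ into $n^4$ open subcubes of side $1/n$. These are pairwise disjoint connected subdomains of the allowed type. By scaling, each subcube $Q_j$ satisfies $s_p(Q_j)=n^{-2}s_p(I)$. Applying \eqref{m:abstract-LS} with all $k_j=p$ gives
\[
 s_{pn^4}(I)\ge n^4\cdot n^{-2}s_p(I)=n^2s_p(I)\ge(b-\delta)\sqrt{pn^4}.
\]
Thus the desired lower bound holds along the subsequence $k=pn^4$.

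For a general $k$, let $n$ be the largest integer with $pn^4\le k$. Monotonicity in the index gives $s_k(I)\ge s_{pn^4}(I)\ge(b-\delta)n^2\sqrt p$. Since $k<p(n+1)^4$, we have
\[
 \frac{s_k(I)}{\sqrt k}\ge(b-\delta)\frac{n^2}{(n+1)^2}.
\]
The right side tends to $b-\delta$ as $k\to\infty$ with $p$ fixed. Letting $\delta\downarrow0$ gives $\liminf_k s_k(I)/\sqrt k\ge b$, which completes the proof.

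There are two points to check in this argument. The first is that \eqref{m:abstract-LS} applies with $N=n^4$ subdomains for both sequences. For $v_k$, the subcubes are open cubes with the restricted flat metric, so Lemma~\ref{c:prop:fixed-local} applies. For $w_k$, they are connected open sets, so Lemma~\ref{c:prop:optimized-local} applies. The second point is finiteness: if some $s_p(I)$ were infinite, the choice of $p$ would fail. This is excluded by \eqref{m:abstract-bound}, which also guarantees $0<b<\infty$. I expect no real obstacle beyond these checks. The only slightly delicate step is the interpolation between the subsequence $pn^4$ and general $k$, and the monotonicity in the index handles it.
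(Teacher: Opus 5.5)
Your proposal is correct and follows essentially the same route as the paper. Both subdivide $I$ into $n^4$ subcubes and apply localization to get $s_{pn^4}(I)\ge n^2s_p(I)$. Both then interpolate to general $k$ by monotonicity in the index, take the upper bound directly from the definition of $b$, and pass to a general $Q$ by translation and scaling.
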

\begin{proof}[\bf Proof of Lemma \ref{lem:cube-limit}]
We subdivide $I$ into $\ell^4$ disjoint open cubes of side
$1/\ell$. By translation invariance,
\eqref{m:abstract-scale}, and \eqref{m:abstract-LS}, we have
\begin{equation}\label{m:cube-selfsimilar}
 s_{p\ell^4}(I)\ge\ell^4\ell^{-2}s_p(I)=\ell^2s_p(I).
\end{equation}
Fix $p$ and set $\ell=\lfloor(k/p)^{1/4}\rfloor$. Then
$p\ell^4\le k$ and $p\ell^4/k\to1$. By monotonicity in the index,
\[
 \liminf_{k\to\infty}\frac{s_k(I)}{\sqrt{k}}
 \ge\frac{s_p(I)}{\sqrt p}.
\]
Taking the supremum over $p$ gives the lower bound $b$. Every ratio
$s_k(I)/\sqrt{k}$ is at most $b$ by definition. Hence the limit
exists and is equal to $b$. Translation invariance and scaling give
the formula on $Q$.
\end{proof}

\subsection{The Weyl law on domains}
\begin{lemma}\label{lem:domain-limit}
For either pair $(s_k,b)$, let $\Omega\subset\R^4$ be a nonempty
bounded connected smooth domain or an open Euclidean cube. Then
\[
 \lim_{k\to\infty}\frac{s_k(\Omega)}{\sqrt{k}}
 =b\sqrt{|\Omega|}.
\]
\end{lemma}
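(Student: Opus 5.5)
We prove the two inequalities separately, using only translation invariance, the scaling law \eqref{m:abstract-scale}, monotonicity under inclusion and in the index, the localization inequality \eqref{m:abstract-LS}, the bound \eqref{m:abstract-bound}, and Lemma~\ref{lem:cube-limit}. If $\Omega$ is a cube, Lemma~\ref{lem:cube-limit} already gives the claim, so we assume that $\Omega$ is a bounded connected smooth domain. Then $|\partial\Omega|=0$.

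\emph{Lower bound.} Fix $\delta>0$ and a dyadic grid of side $2^{-n}$. Let $Q_1,\ldots,Q_N$ be the open grid cubes whose closures lie in $\Omega$. For $n$ large, $\sum_i|Q_i|\ge|\Omega|-\delta$, since $|\partial\Omega|=0$. These cubes are pairwise disjoint connected subdomains of $\Omega$, and all have volume $q=2^{-4n}$. Given $k\ge N$, write $k=Nk_0+r$ with $0\le r<N$. Monotonicity in the index and \eqref{m:abstract-LS} give
\[
 s_k(\Omega)\ge s_{Nk_0}(\Omega)\ge\sum_i s_{k_0}(Q_i)=N s_{k_0}(Q_1).
\]
By Lemma~\ref{lem:cube-limit},
\[
 \liminf_{k\to\infty}\frac{s_k(\Omega)}{\sqrt k}\ge N\,b\sqrt q\,N^{-1/2}=b\sqrt{Nq}\ge b\sqrt{|\Omega|-\delta}.
\]
Letting $\delta\downarrow0$ proves the lower bound.

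\emph{Upper bound.} We localize in an outer cube and subtract the cubes in the complement. Choose an open cube $Q\supset\overline\Omega$, and grid $Q$ into cubes of side $2^{-n}$. Let $P_1,\ldots,P_M$ be the grid cubes whose closures are disjoint from $\overline\Omega$. Then $M q\ge|Q|-|\Omega|-\delta$ for $n$ large, where we use $|\partial\Omega|=0$ again.

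The sets $\Omega,P_1,\ldots,P_M$ are pairwise disjoint connected subdomains of $Q$. Here $\Omega$ is a smooth domain, which is an admissible type for both $w$ and $v$. For $k$ large, put $j=\lfloor\theta k\rfloor$ with a parameter $\theta>0$. Then \eqref{m:abstract-LS} gives
\[
 s_{k+Mj}(Q)\ge s_k(\Omega)+M s_j(P_1).
\]
Lemma~\ref{lem:cube-limit} applied to $Q$ and to $P_1$ then yields
\[
 \limsup_{k\to\infty}\frac{s_k(\Omega)}{\sqrt k}\le b\bigl(\sqrt{|Q|(1+M\theta)}-M\sqrt{q\theta}\bigr).
\]
We optimize over $\theta$. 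Set $x=M\theta$ and $\beta=Mq\le|Q|$. The right side becomes
\[
 b\bigl(\sqrt{|Q|(1+x)}-\sqrt{\beta x}\bigr).
\]
Its minimum over $x>0$ is attained at $x=\beta/(|Q|-\beta)$, and the minimum equals $b\sqrt{|Q|-\beta}$. Since $|Q|-\beta\le|\Omega|+\delta$, letting $\delta\downarrow0$ gives the upper bound $b\sqrt{|\Omega|}$.

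The main obstacle is this upper bound. One has to check that \eqref{m:abstract-LS} applies to the mixed collection consisting of one smooth domain and many cubes inside the cube $Q$. For $v_k$ this requires Lemma~\ref{c:prop:fixed-local} with cube subdomains, which are allowed by the conventions of Section~\ref{sec:four-dimensional-definitions}. One also has to confirm that the optimization over $\theta$ can be done with the limits taken in the right order: first $k\to\infty$ with $n$ and $\theta$ fixed, then optimize over $\theta$, then let $\delta\downarrow0$. The lower bound is routine.
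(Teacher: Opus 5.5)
Your proof is correct and follows the paper's argument. The lower bound uses disjoint inner cubes, localization and the cube limit. The upper bound uses localization in an outer cube, subtracts the complementary cubes, and invokes $|\partial\Omega|=0$. The only difference is the index bookkeeping: the paper sets $k_i=\lfloor kv_i/|\Omega|\rfloor$ directly, which is the limit of your optimal proportion $x=\beta/(|Q|-\beta)$. You instead use equal dyadic cubes and optimize over $\theta$ explicitly, which gives the bound $b\sqrt{|Q|-\beta}$ for each fixed grid.
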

\begin{proof}[\bf Proof of Lemma \ref{lem:domain-limit}]
\emph{Lower bound.}
Fix finitely many disjoint inner cubes $I_1,\ldots,I_m$ whose sides
are parallel to the coordinate axes. Put $v_i=|I_i|$ and
$S=\sum_i v_i$, and choose
\[
 k_i=\left\lfloor\frac{kv_i}{S}\right\rfloor.
\]
For sufficiently large $k$, these indices are positive and their
sum is at most $k$. By localization, monotonicity in the index,
and Lemma~\ref{lem:cube-limit}, we have
\[
 \liminf_{k\to\infty}\frac{s_k(\Omega)}{\sqrt{k}}
 \ge\sum_i b\sqrt{v_i}\sqrt{v_i/S}=b\sqrt S.
\]
Finite unions of inner cubes approximate the volume of $\Omega$.
Letting $S\uparrow|\Omega|$, we obtain the lower bound.

\emph{Upper bound.}
Choose an open cube $I_0$ containing $\overline\Omega$, and put
$V_0=|I_0|$ and $v=|\Omega|$. Fix finitely many disjoint cubes
$I_1,\ldots,I_m$ in $I_0\setminus\overline\Omega$. All these cubes
have sides parallel to the coordinate axes. Put $v_i=|I_i|$ and
$S=\sum_i v_i$. We choose
\[
 k_i=\left\lfloor\frac{kv_i}{v}\right\rfloor,
 \qquad K=k+\sum_i k_i.
\]
Localization in the outer cube gives
\[
 s_K(I_0)\ge s_k(\Omega)+\sum_i s_{k_i}(I_i).
\]
We move the sum to the left and divide by $\sqrt{k}$. Since
$K/k\to1+S/v$, Lemma~\ref{lem:cube-limit} gives
\begin{equation}\label{m:complement}
 \limsup_{k\to\infty}\frac{s_k(\Omega)}{\sqrt{k}}
 \le b\left[\sqrt{V_0(1+S/v)}-\frac S{\sqrt v}\right].
\end{equation}
The boundary of $\Omega$ has zero four-dimensional measure. Thus
we can let $S\uparrow V_0-v$. The expression in brackets converges
to $\sqrt v$, which proves the upper bound. 
\end{proof}

The case $(s_k,b)=(v_k,b_{\AH}^{\mathrm{flat}})$ gives the limits
in Theorem~\ref{thm:intro-fixed-weyl}. We prove the bounds on its
coefficient in the last subsection.

We now use only $w_k$, which is defined on every nonempty connected
open set by \eqref{eq:family-capacity}.

\begin{lemma}\label{lem:open-weyl}
Let $U\subset\R^4$ be a nonempty connected open set. Then
\[
 \liminf_{k\to\infty}\frac{w_k(U)}{\sqrt{k}}
 \ge b_{\AH}\sqrt{|U|}.
\]
If $U$ is bounded, then
\[
 \limsup_{k\to\infty}\frac{w_k(U)}{\sqrt{k}}
 \le b_{\AH}\sqrt{|\overline U|}.
\]
Consequently, the limit is $b_{\AH}\sqrt{|U|}$ when $U$ is bounded
and $|\partial U|=0$. If $|U|=+\infty$, the limit is $+\infty$.
\end{lemma}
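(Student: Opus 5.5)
The plan is inner and outer approximation by bounded connected smooth domains, combined with Lemma~\ref{lem:domain-limit} and Lemma~\ref{lem:connected-exhaustion}.

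\emph{Lower bound.} Take an exhaustion $D_1\Subset D_2\Subset\cdots$ of $U$ by bounded connected smooth domains, so that $|D_j|\uparrow|U|$ (possibly $+\infty$). Monotonicity (C1) under the inclusion $D_j\subset U$ gives $w_k(U)\ge w_k(D_j)$ for every $k$. Lemma~\ref{lem:domain-limit} then yields
\[
 \liminf_{k\to\infty}\frac{w_k(U)}{\sqrt k}\ge b_{\AH}\sqrt{|D_j|}.
\]
Letting $j\to\infty$ gives the lower bound, including the case $|U|=+\infty$, where the limit is $+\infty$. Such an exhaustion exists for a connected open set: take sublevel sets of a smooth exhaustion function at regular values, and join the components meeting a fixed compact set by thin tubes. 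Equivalently, apply Lemma~\ref{lem:domain-limit} to finite unions of cubes $Q\Subset U$ joined by thin tubes, and smooth the corners. Only volume approximation is needed here.

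\emph{Upper bound.} For bounded $U$ and $\delta>0$, choose a bounded connected smooth domain $\Omega\supset\overline U$ with $|\Omega|\le|\overline U|+\delta$. One way is to take a smooth function approximating the distance to $\overline U$, choose a regular value $\rho$ small, and let $\Omega$ be the component of the sublevel set $\{<\rho\}$ containing the connected set $\overline U$. Since the sets $\{\operatorname{dist}(\cdot,\overline U)<\rho\}$ decrease to $\overline U$ as $\rho\downarrow0$, their volumes tend to $|\overline U|$. Monotonicity gives $w_k(U)\le w_k(\Omega)$, and Lemma~\ref{lem:domain-limit} gives
\[
 \limsup_{k\to\infty}\frac{w_k(U)}{\sqrt k}\le b_{\AH}\sqrt{|\overline U|+\delta}.
\]
Then let $\delta\downarrow0$.

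\emph{Combination.} If $|\partial U|=0$, then $|\overline U|=|U|$, and the two bounds agree.

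\emph{Main obstacle.} The delicate step is constructing a smooth connected domain squeezed between the closed set $\overline U$ and a slightly larger open neighbourhood. The plan handles this with a regular-value sublevel set of a mollified distance function and Sard's theorem, taking the component that contains $\overline U$. This component contains all of $\overline U$ because $\overline U$ is connected, being the closure of a connected set.
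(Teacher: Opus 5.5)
Your proposal is correct and follows essentially the same route as the paper: an inner exhaustion by bounded connected smooth domains together with monotonicity and Lemma~\ref{lem:domain-limit} gives the lower bound, and an outer bounded connected smooth domain $\Omega\supset\overline U$ with $|\Omega|\le|\overline U|+\delta$ gives the upper bound. The only difference is in how $\Omega$ is built. The paper joins finitely many boxes by thin tubes, whereas you take the component containing the connected set $\overline U$ of a regular sublevel set of a mollified distance function, which avoids the tube-joining step.
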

\begin{proof}[\bf Proof of Lemma \ref{lem:open-weyl}]
For every bounded connected smooth domain $\Omega\Subset U$,
monotonicity and Lemma~\ref{lem:domain-limit} give
\[
 \liminf_{k\to\infty}\frac{w_k(U)}{\sqrt{k}}
 \ge b_{\AH}\sqrt{|\Omega|}.
\]
We take a smooth connected exhaustion of $U$, as in
Lemma \ref{lem:connected-exhaustion}. Its volumes tend to $|U|$.
This proves the lower bound, including the case of infinite volume.
The domain is fixed before taking the high-index limit.

Now suppose that $U$ is bounded. For every $\delta>0$, there is a
bounded connected smooth domain $\Omega_\delta$ such that
\[
 \overline U\subset\Omega_\delta,\qquad
 |\Omega_\delta|\le|\overline U|+\delta.
\]
Indeed, first take an open neighborhood of the compact set
$\overline U$ with volume this close. Finitely many boxes and
smoothing give a smooth neighborhood with finitely many components.
We join the components by thin tubes and smooth the junctions.
The tubes can be chosen with arbitrarily small total volume, and
we keep a positive margin around $\overline U$.

By monotonicity and Lemma~\ref{lem:domain-limit},
\[
 \limsup_{k\to\infty}\frac{w_k(U)}{\sqrt{k}}
 \le b_{\AH}\sqrt{|\overline U|+\delta}.
\]
Let $\delta\downarrow0$. If $|\partial U|=0$, then
$|\overline U|=|U|$, and the two bounds agree.
\end{proof}

\subsection{Comparison and bounds for the coefficients}
\begin{proof}[\bf Proof of Theorems~\ref{thm:intro-weyl}
and~\ref{thm:intro-fixed-weyl}]
Lemmas~\ref{lem:cube-limit} and~\ref{lem:domain-limit} give the
limits on cubes and smooth domains for both sequences.
Lemma~\ref{lem:open-weyl} gives the remaining bounds for the
capacities. It remains to compare and estimate the coefficients.

First, let $I_r$ be the concentric cube of side $r<1$ inside $I$.
Choose a connected smooth domain $D_r$ such that
$\overline{I_r}\subset D_r\Subset I$. Restriction and
\eqref{eq:family-capacity} give
\[
 r^2v_k(I)=v_k(I_r)\le v_k(D_r)\le w_k(I).
\]
For each fixed $k$, let $r\uparrow1$. We obtain $v_k(I)\le w_k(I)$.
Taking suprema in \eqref{x:optimized-cube-coefficient} gives
\[
 0<b_{\AH}^{\mathrm{flat}}\le b_{\AH}.
\]

Next, we prove the upper bound on $b_{\AH}$. Since
$|B(a)|=a^2/2$ and $d(k)/\sqrt{k}\to\sqrt2$,
Lemma~\ref{m:sw-ball} and the limit on the ball give
\[
 b_{\AH}\frac a{\sqrt2}
 =\lim_{k\to\infty}\frac{w_k(B(a))}{\sqrt{k}}
 \le a\sqrt2.
\]
Therefore $b_{\AH}\le2$.

For the lower bound, we use equal-ball packings. For every positive
integer $m$ and every $0<\zeta<1$, there is a symplectic embedding
\begin{equation}\label{s:equalpacking}
 \bigsqcup_{i=1}^{m^2}B\left(\frac{(1-\zeta)a}{m}\right)
 \hookrightarrow B(a).
\end{equation}
McDuff--Polterovich proved full packings for square numbers of
balls \cite{McDuffPolterovich1994}. 
We apply Lemma~\ref{c:prop:optimized-local} to the images of the
balls. Symplectic invariance and first normalization give
\[
 w_{m^2}(B(a))\ge
 m^2\frac{(1-\zeta)a}{m}=(1-\zeta)ma.
\]
First let $\zeta\downarrow0$ with $m$ fixed. Then use the limit on
the ball along $k=m^2$ to obtain
\[
 b_{\AH}\frac a{\sqrt2}
 =\lim_{m\to\infty}\frac{w_{m^2}(B(a))}{m}\ge a.
\]
Thus $b_{\AH}\ge\sqrt2$. This proves both theorems.
\end{proof}

\pdfbookmark[1]{References}{references}

\end{document}